\documentclass[11pt,a4paper]{article}
\usepackage{amsfonts}
\usepackage{latexsym}
\usepackage{cite}
\usepackage{amsmath,amsfonts,latexsym,amssymb}
\usepackage[mathscr]{eucal}
\usepackage{cases,color}
\usepackage{amsthm}

\usepackage[bf,small]{caption2}
\usepackage{float}
\usepackage{graphicx}
\usepackage{amsmath}
\usepackage{amssymb}
\usepackage[all]{xy}

\newtheorem{theorem}{theorem}[section]

\newtheorem{cla}[theorem]{Claim}
\newtheorem{conv}[theorem]{Convention}
\newtheorem{cor}[theorem]{Corollary}
\newtheorem{defn}[theorem]{Definition}

\newtheorem{lem}[theorem]{Lemma}
\newtheorem{nota}[theorem]{Notation}

\newtheorem{rmk}[theorem]{Remark}
\newtheorem{thm}[theorem]{Theorem}

\begin{document}

\title{\vspace{-2cm}\textbf{Presentations of skein algebras of genus 2}}
\author{\Large Haimiao Chen}
\date{}
\maketitle

\begin{abstract}
  We give a presentation for the Kauffman bracket skein algebra of a closed or one-holed oriented surface of genus $2$.
  This is the first such result for genus greater than $1$.

  \medskip
  \noindent {\bf Keywords:} Kauffman bracket skein algebra; presentation; character variety; genus 2; monomial basis  \\
  {\bf MSC2020:} 57K16, 57K31
\end{abstract}

\section{Introduction}

Let $R$ be a commutative ring with identity and a fixed invertible element $q^{\frac{1}{2}}$.
For an oriented $3$-manifold $M$, its {\it Kauffman bracket skein module} over $R$, denoted by $\mathcal{S}(M;R)$, is defined as the quotient of the free $R$-module generated by isotopy classes of (possibly empty) framed links embedded in $M$ by the submodule generated by the following {\it skein relations}:
\begin{figure}[h]
  \centering
  \includegraphics[width=9cm]{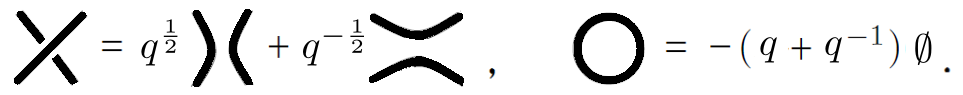}\\
\end{figure}
\\
As a convention, $R$ is identified with $R\emptyset\subset\mathcal{S}(M;R)$ via $\mu\mapsto \mu\emptyset$.

For an oriented surface $\Sigma$, denote $\mathcal{S}(\Sigma\times[0,1];R)$ as $\mathcal{S}(\Sigma;R)$ and call it the {\it Kauffman bracket skein algebra} of $\Sigma$.
Given links $\mathbf{l}_1,\mathbf{l}_2\subset\Sigma\times(0,1)$, the product $\mathbf{l}_1\mathbf{l}_2$ is defined by stacking $\mathbf{l}_1$ over $\mathbf{l}_2$ in the $[0,1]$ direction. Adopt the convention that each framed link is presented as a link equipped with the blackboard framing, i.e. each framing vector points vertically upward.

Throughout the paper, $R=\mathbb{Z}[q^{\pm\frac{1}{2}}]$.
Abbreviate $\mathcal{S}(M;\mathbb{Z}[q^{\pm\frac{1}{2}}])$ to $\mathcal{S}(M)$.

Raised as \cite[Problem 1.92 (J)]{Ki97} is the problem of finding the structure of $\mathcal{S}(\Sigma)$ for all surfaces $\Sigma$.
Let $\Sigma_{g,k}$ denote a $k$-holed orientable surface of genus $g$.
A presentation of $\mathcal{S}(\Sigma_{g,k})$ for $g=0,k\le 4$ and $g=1,k\le 2$ was known to Bullock and Przytycki \cite{BP00} early in 2000.
Recently, a presentation of $\mathcal{S}(\Sigma_{0,5})$ was obtained by Cooke and Lacabanne \cite{CL26} and Chen \cite{Ch25}.

In this paper, we settle the problem for $\Sigma_{2,1}$ and $\Sigma_{2,0}$. This is the first time to write down a presentation for a genus $2$ surface. Previously, several structural properties of $\mathcal{S}(\Sigma_{2,0})$ had been uncovered in \cite{Ar25,CS21}.

Display $\Sigma_{2,1}$ as in the left part of Figure \ref{fig:Sigma2-1}. Cutting $\Sigma_{1,2}$ along the dotted lines results in a disk $D$.
It is more convenient to view $\Sigma_{2,1}$ as $D$, with $4$ pairs of oriented arcs on the boundary chosen, such that the two arcs in each pair
are identified according to the orientations (see Figure \ref{fig:Sigma2-1}, right).
We will use arcs of certain kind in $D$ to stand for loops in $\Sigma_{2,1}$.

\begin{figure}[H]
  \centering
  \includegraphics[width=11.5cm]{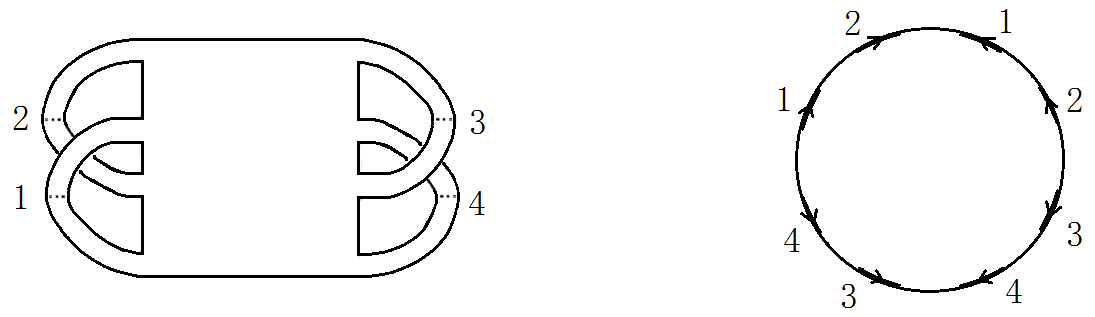}\\
  \caption{Left: the surface $\Sigma_{2,1}$, where the dotted lines are $\mathbf{z}_1,\ldots,\mathbf{z}_4$. Right: the disk obtained by cutting $\Sigma_{2,1}$ along the $\mathbf{z}_i$'s; any two arcs with the same label are identified. The information on the arcs and orientations will be hidden.}\label{fig:Sigma2-1}
\end{figure}

\begin{figure}[H]
  \centering
  \includegraphics[width=11.5cm]{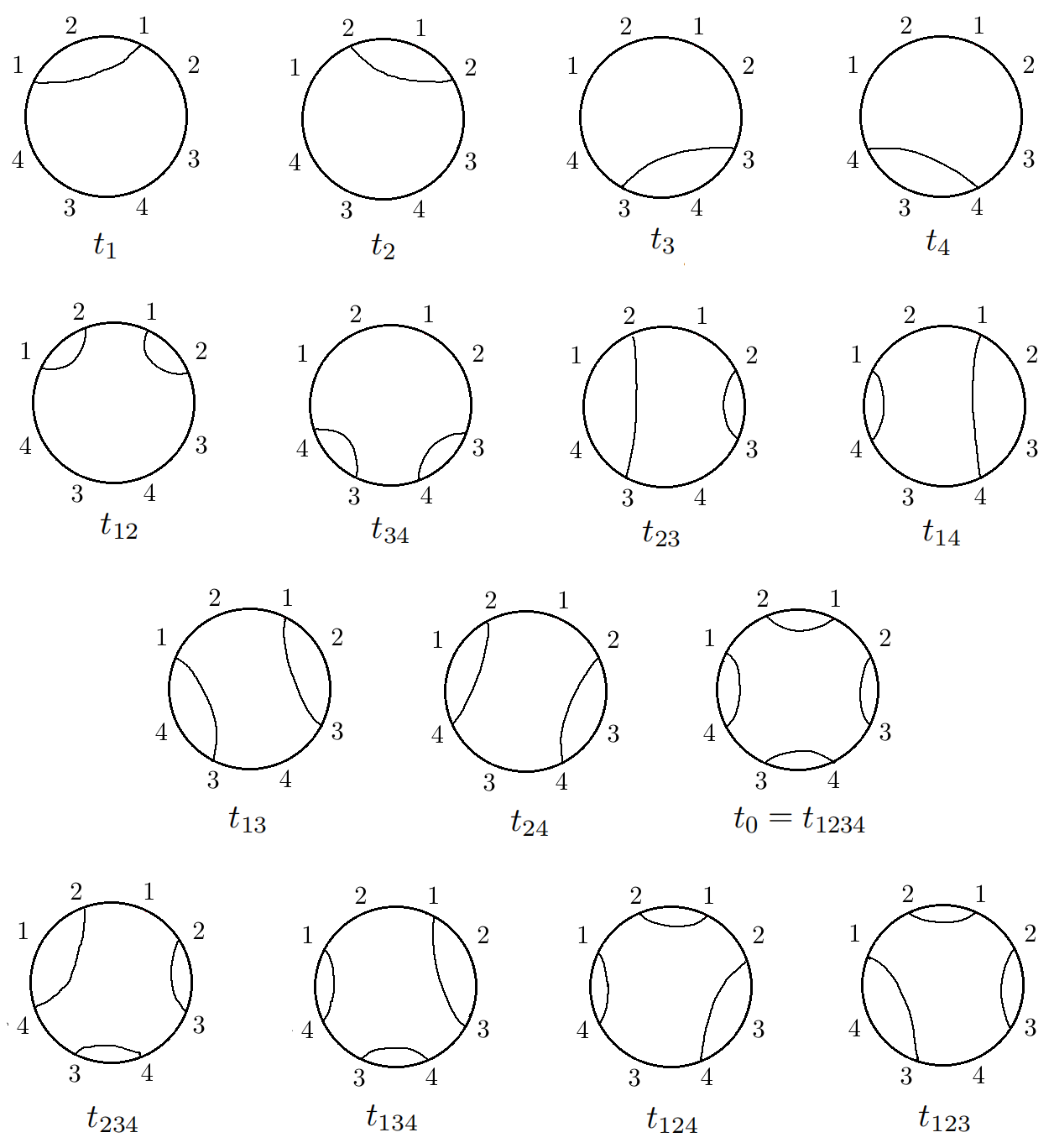}\\
  \caption{$t_{i_1\cdots i_r}$ for $1\le i_1<\cdots<i_r\le 4$.}\label{fig:generator}
\end{figure}

Introduced in Figure \ref{fig:generator} are $15$ elements of $\mathcal{S}(\Sigma_{2,1})$, with $t_0=t_{1234}$. Let
$$\mathcal{G}=\{t_{i_1\cdots i_r}\colon 1\le i_1<\cdots<i_r\le 4\}.$$
Put
$$t'_{12}=q^{\frac{1}{2}}t_1t_2-qt_{12},  \qquad   t'_{34}=q^{\frac{1}{2}}t_3t_4-qt_{34}.$$

The main results of this paper are the following three theorems.

\begin{thm}\label{thm:presentation-1}
The skein algebra $\mathcal{S}(\Sigma_{2,1})$ is generated by $\mathcal{G}$, and the ideal of defining relations is generated by the following three families of relations.

{\rm(I)} Commuting relations:
\begin{align*}
t_1t_3=t_3t_1, \qquad  t_1t_4=t_4t_1, \qquad t_1t_{34}=t_{34}t_1, \qquad t_1t_{14}=t_{14}t_1, \\
t_1t_{13}=t_{13}t_1, \qquad  t_1t_{134}=t_{134}t_1, \qquad  t_{12}t_{34}=t_{34}t_{12}, \qquad  t_{23}t_{14}=t_{14}t_{23}, \\
t_{23}t_0=t_0t_{23}, \quad   t_{23}t_{134}=t_{134}t_{23}, \quad  t_{23}t_{124}=t_{124}t_{23},  \quad  t_{123}t_{134}=t_{134}t_{123},
\end{align*}
and those obtained by symmetries.

{\rm(II)} Commutator relations:
\begin{align}
t_1t_2&=\overline{q}t_2t_1+(q^{\frac{1}{2}}-\overline{q}^{\frac{3}{2}})t_{12},  \qquad
t_1t_{12}=qt_{12}t_1+(\overline{q}^{\frac{1}{2}}-q^{\frac{3}{2}})t_2,  \nonumber  \\
t_1t_{23}&=qt_{23}t_1+(\overline{q}^{\frac{1}{2}}-q^{\frac{3}{2}})t_{123},  \qquad
t_1t_{24}=qt_{24}t_1+(\overline{q}^{\frac{1}{2}}-q^{\frac{3}{2}})t_{124},  \nonumber   \\
t_1t_0&=\overline{q}t_0t_1+(q^{\frac{1}{2}}-\overline{q}^{\frac{3}{2}})t_{234},  \qquad
t_1t_{234}=qt_{234}t_1+(\overline{q}^{\frac{1}{2}}-q^{\frac{3}{2}})t_0,  \nonumber   \\
t_1t_{124}&=\overline{q}t_{124}t_1+(q^{\frac{1}{2}}-\overline{q}^{\frac{3}{2}})t_{24},  \qquad
t_1t_{123}=\overline{q}t_{123}t_1+(q^{\frac{1}{2}}-\overline{q}^{\frac{3}{2}})t_{23},  \nonumber   \\
t_{12}t_{23}&=qt_{23}t_{12}+(\overline{q}^{\frac{1}{2}}-q^{\frac{3}{2}})t_{13},  \nonumber   \\
t_{12}t_{13}&=\overline{q}t_{13}t_{12}+(q^{\frac{1}{2}}-\overline{q}^{\frac{3}{2}})t_{23}, \nonumber   \\
t_{12}t_0&=t_0t_{12}+(q-\overline{q})(t_2t_{234}-t_1t_{134}),   \label{eq:t12-vs-t0}  \\
t_{12}t_{134}&=\overline{q}t_{134}t_{12}+(q^{\frac{1}{2}}-\overline{q}^{\frac{3}{2}})t_{234}, \nonumber   \\
t_{12}t_{123}&=t_{123}t_{12}+(q-\overline{q})(t_2t_{23}-t_1t_{13}), \nonumber   \\
t_{23}t_{13}&=qt_{13}t_{23}+(\overline{q}^{\frac{1}{2}}-q^{\frac{3}{2}})t_{12},  \nonumber   \\
t_{23}t_{123}&=qt_{123}t_{23}+(\overline{q}^{\frac{1}{2}}-q^{\frac{3}{2}})t_1, \nonumber    \\
t_{13}t_{24}&=t_{24}t_{13}+(q-\overline{q})(t_{12}t_{34}-t_{14}t_{23}), \nonumber   \\
t_{13}t_{234}&=t_{234}t_{13}+(q-\overline{q})(t_{12}t_4-t_{23}t_{134}),  \label{eq:t13-vs-t234} \\
t_{13}t_{123}&=\overline{q}t_{123}t_{13}+(q^{\frac{1}{2}}-\overline{q}^{\frac{3}{2}})t_2,  \nonumber   \\
t_{13}t_0&=\overline{q}^2t_0t_{13}+(q-\overline{q}^3)t_2t_4+(q^{\frac{1}{2}}-\overline{q}^{\frac{3}{2}})(t'_{12}t_{14}+t'_{34}t_{23}) \nonumber \\
&\ \ \ \ +(\overline{q}^{\frac{1}{2}}-q^{\frac{3}{2}})(t_1t_{124}+t_3t_{234})+2(q^2-1)t_{24},  \label{eq:t13-vs-t0}  \\
t_{123}t_{124}&=\overline{q}t_{124}t_{123}+(q^{\frac{1}{2}}-\overline{q}^{\frac{3}{2}})t_{34}, \nonumber   \\
t_{123}t_{234}&=t_{234}t_{123}+(q-\overline{q})(t_1t_4-t_{23}t_0), \nonumber    \\
t_{123}t_0&=\overline{q}t_0t_{123}+(q^{\frac{1}{2}}-\overline{q}^{\frac{3}{2}})t_4,  \label{eq:t123-vs-t0}
\end{align}
and those obtained by symmetries.

{\rm(III)} Reduction relations:
\begin{align}
t_{13}t_{24}&=qt_{12}t_{34}+\overline{q}t_{14}t_{23}+t_1t_4t_{23}+t_2t_3t_{14}-q^{\frac{1}{2}}(t_1t_{234}+t_3t_{124}) \nonumber \\
&\ \ \ \ -\overline{q}^{\frac{1}{2}}(t_2t_{134}+t_4t_{123})+2t_0,  \label{eq:t13t24}  \\
t_{13}t_{234}&=q^{\frac{1}{2}}t_3t_0+\overline{q}t_{23}t_{134}-\overline{q}^{\frac{1}{2}}t'_{34}t_{123}-qt_1t_3t_{234}+t_1t'_{34}t_{23}-2qt_{124}  \nonumber  \\
&\ \ \ \ +q^{\frac{1}{2}}t_2t_{14}+q^{\frac{3}{2}}t_1t_{24}+qt_4t_{12}, \label{eq:t13t234} \\
t_{123}^2&=(q^{\frac{1}{2}}t_1t_{23}+\overline{q}^{\frac{1}{2}}t_2t_{13}+t_3t_{12})t_{123}
-\overline{q}^{\frac{1}{2}}t'_{12}t_{23}t_{13}+\overline{q}t'_{12}t_{12}-\overline{q}t_1t_3t_{13}   \nonumber \\
&\ \ \ \ -qt_2t_3t_{23}-\overline{q}^2t_{13}^2-q^2t_{23}^2-t_1^2-t_2^2-t_3^2+\alpha^2,  \label{eq:t123t123} \\
t_{123}t_{124}&=\overline{q}^{\frac{1}{2}}t_{12}t_0+\overline{q}^{\frac{3}{2}}(t_1t_{23}t_{124}+t_2t_{14}t_{123}
-t_{14}t'_{12}t_{23}-t_1t_{134})  \nonumber  \\
&\ \ \ \ -q^{\frac{1}{2}}t_2t_{234}-\overline{q}t_{24}t_{23}-\overline{q}^2t_{13}t_{14}+2q^{\frac{1}{2}}t_{34}-t_3t_4,  \label{eq:t123t124} \\
t_{123}t_{134}&=qt_{13}t_0+q^{\frac{5}{2}}(t_1t_{124}+t_3t_{234})-q^{\frac{3}{2}}(t'_{12}t_{14}+t'_{34}t_{23})-2q^3t_{24}-q^2t_2t_4,  \label{eq:t123t134}  \\
t_{123}t_{234}&=(\overline{q}t_{23}+t_2t_3)t_0+t'_{12}t'_{34}t_{23}-qt_3t'_{12}t_{234}-\overline{q}t_2t'_{34}t_{123}-q^{\frac{1}{2}}t_2t_{124}  \nonumber  \\
&\ \ \ \ -\overline{q}^{\frac{1}{2}}t_3t_{134}+q^{\frac{3}{2}}t'_{12}t_{24}+\overline{q}^{\frac{3}{2}}t'_{34}t_{13}
+2t_{14}+qt_1t_4,  \label{eq:t123t234}  \\
t_{123}t_0&=(q^{\frac{1}{2}}t_1t_{23}+t_3t_{12}+\overline{q}^{\frac{1}{2}}t_2t_{13})t_0
-(\overline{q}^{\frac{1}{2}}t'_{12}t_{23}+\overline{q}^2t_{13}+\overline{q}t_1t_3)t_{134}   \nonumber  \\
&\ \ \ \ -q^2t_{23}t_{234}+q^{\frac{3}{2}}t'_{12}t_{124}-t_2(t'_{12}t_{14}+t'_{34}t_{23}+q^{\frac{1}{2}}t_2t_4+q^{\frac{3}{2}}t_{24}) \nonumber \\
&\ \ \ \ -t_3t'_{34}+\overline{q}^{\frac{1}{2}}t_1t_{14}+q^{\frac{3}{2}}\alpha t_4,   \label{eq:t123t0}   \\
t_0^2&=(q^{\frac{1}{2}}t_1t_{234}+\overline{q}^{\frac{1}{2}}t_4t_{123}-\overline{q}t_{14}t_{23}-t_1t_4t_{23})t_0+t_{14}t_{123}t_{234} \nonumber \\
&\ \ \ \ +(\overline{q}^{\frac{3}{2}}t_1t_{23}-\overline{q}^2t_{123})t_{123}+(q^{\frac{3}{2}}t_4t_{23}-q^2t_{234})t_{234}-qt_1t_4t_{14} \nonumber  \\
&\ \ \ \ -t_{14}^2-t_{23}^2-\overline{q}^2t_1^2-q^2t_4^2+\alpha^2,  \label{eq:t0t0}
\end{align}
and those obtained by symmetries.
\end{thm}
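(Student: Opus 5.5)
The proof will go in two stages: first verify that the listed relations hold and that $\mathcal{G}$ generates, then show the relations suffice. For the latter we compare a spanning set of the abstract algebra with a known basis of $\mathcal{S}(\Sigma_{2,1})$.

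\textbf{Step 1: the relations hold.} Each relation in (I), (II), (III) is checked by a direct skein computation in the disk model $D$ of Figure \ref{fig:Sigma2-1}. For (I), the two curves can be isotoped to be disjoint, or to meet in a way that stacking order does not matter. For (II) and (III), the curves meet in one or two points. We resolve all crossings of $\mathbf{l}_1\mathbf{l}_2$ and of $\mathbf{l}_2\mathbf{l}_1$ with the Kauffman relation, identify each resulting simple multicurve as a polynomial in $\mathcal{G}$, and read off the stated identity. The relations of (III) then express a product $xy$ with $x,y\in\mathcal{G}$ in terms of lower terms. Symmetries of the square model (reflections and rotations of $D$ permuting the labels $1,\ldots,4$) reduce the verification to the listed representatives.

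\textbf{Step 2: generation.} Simple multicurves form a $\mathbb{Z}[q^{\pm\frac12}]$-basis of $\mathcal{S}(\Sigma_{2,1})$. We induct on a complexity such as the total geometric intersection number with the cutting arcs $\mathbf{z}_1,\ldots,\mathbf{z}_4$. A simple closed curve with large intersection is written, via a product of two curves of smaller intersection and the Kauffman relation, as that product minus terms of lower complexity. The base case is the curves meeting each $\mathbf{z}_i$ at most once; these are, up to isotopy, exactly the $15$ elements $t_{i_1\cdots i_r}$ (together with the boundary parallel curve, which yields $\alpha$). Hence $\mathcal{G}$ generates.

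\textbf{Step 3: completeness.} Let $\mathcal{A}$ be the algebra abstractly presented by $\mathcal{G}$ and relations (I)--(III), with the surjection $\phi\colon\mathcal{A}\to\mathcal{S}(\Sigma_{2,1})$. Fix a total order on $\mathcal{G}$ and a filtration by degree. Using (I), (II) to reorder and (III) to reduce squares and the specified products of ``incompatible'' pairs, every word in $\mathcal{A}$ becomes a combination of ordered monomials $t_{a_1}^{e_1}\cdots t_{a_m}^{e_m}$ whose generators lie in a common ``compatible'' family. The compatible families should play the role of pants decompositions and $\mathcal{A}$-admissible configurations, and $t_{123},t_0$ may appear with exponent at most $1$. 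This rewriting terminates because each relation lowers either degree or an inversion count. So these monomials span $\mathcal{A}$.

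It remains to show that their images under $\phi$ are linearly independent. I would do this through leading terms. Take Dehn--Thurston coordinates with respect to a pants decomposition adapted to $\mathbf{z}_1,\ldots,\mathbf{z}_4$. The product of curves has a leading term equal to $q^{\ast}$ times the multicurve whose coordinates are the sums of the factors' coordinates, because the coordinates are additive on the top term. The reduced monomials then map to distinct coordinate vectors, and these vectors exhaust the coordinate lattice. By a triangularity argument, the monomials therefore map to a basis, so $\phi$ is injective.

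\textbf{Main obstacle.} Step 3 is the crux: the choice of normal form must be exactly right, namely the set of monomials that survive the reductions must biject with multicurves. This is where the degree-$2$ relations involving $t_{123}$ and $t_0$ are needed, and it is hard to confirm by hand. I would first try specializing to $q=1$. There $\mathcal{S}$ becomes the coordinate ring of the $SL_2$ character variety of the free group $F_4$. I would compare Hilbert series of the associated graded algebras to check that no further relations are needed, and then lift to generic $q$ by a flatness argument over $\mathbb{Z}[q^{\pm\frac12}]$.
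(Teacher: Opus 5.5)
\textbf{There is a genuine gap in Step 3.} Your Steps 1 and 2 broadly match the paper, apart from two slips noted at the end. Step 3, however, carries essentially all the content of the theorem. It is not carried out, and the normal form you sketch cannot work.

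If ``compatible'' means pairwise disjoint, as the reference to pants decompositions suggests, then a compatible family has at most $3g-3+b=4$ members. Ordered monomials in such families are then just multicurves whose components lie among the $15$ curves of $\mathcal{G}$. There are only $O(d^4)$ of these in degree at most $d$. The actual basis $\mathcal{C}$ has nine independent exponents, so $\mathcal{S}(\Sigma_{2,1})$ needs on the order of $d^9$ elements in degree at most $d$, and your rewriting never raises degree. The paper's normal form therefore has to contain products of intersecting, non-commuting generators: it is $t_1^{i_1}\cdots t_4^{i_4}\,a\,t_{12}^{j_1}t_{23}^{j_2}t_{34}^{j_3}t_{14}^{j_4}$, with $a$ in the specific list $\mathcal{A}$.

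Your termination claim also fails as stated. Relation (\ref{eq:t13t24}) trades $t_{13}t_{24}$ for $2t_0$, relation (\ref{eq:t123t134}) trades $t_{123}t_{134}$ for $qt_{13}t_0$, and the commutator of $t_{13}$ and $t_{24}$ produces $t_{12}t_{34}-t_{14}t_{23}$. All of these stay in the same degree, so neither degree nor an inversion count need drop. The paper handles this with three ingredients:
\begin{itemize}
\item the order $\|u\|=(|u|,|u|_s,|\ddot u|)$, under which commutators strictly decrease (Claim \ref{cla:commutator});
\item the distinction between reducing and displacing in Lemma \ref{lem:reduce};
\item a case analysis on the heavy part of a normal monomial, showing that every normal monomial lies in $R\langle\mathcal{C}\rangle$.
\end{itemize}
Similarly, your Dehn--Thurston argument would require naming the spanning set and checking that its leading coordinate vectors are pairwise distinct. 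That is exactly the part left open.

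\textbf{On your fallback.} It is close to what the paper does, but the logic should be arranged differently. You do not need flatness of the presented algebra, and you cannot assume it, since that is essentially what is to be proved. Once the rewriting gives an explicit $R$-spanning set $\mathcal{C}$ of the presented algebra, injectivity follows from linear independence of its image in $\mathcal{S}(\Sigma_{2,1})$. The paper gets this from three inputs:
\begin{itemize}
\item freeness, hence torsion-freeness, of $\mathcal{S}(\Sigma_{2,1})$ \cite{Pr99};
\item Lemma \ref{lem:strategy};
\item the explicit basis $\mathsf{B}$ of $\mathbb{C}[\mathcal{X}(F_4)]$ from \cite{Ch25}, to which $\epsilon(\mathcal{C})$ is related at $q^{\frac12}=-1$ by a triangular change coming from (\ref{eq:t13t24}).
\end{itemize}
A Hilbert-series comparison would need the same inputs anyway: the spanning set from the rewriting, and a basis (or graded dimensions) of $\mathbb{C}[\mathcal{X}(F_4)]$.

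\textbf{Two slips in Step 2.}
\begin{itemize}
\item The simple curves meeting each $\mathbf{z}_k$ at most once are not only the $15$ elements of $\mathcal{G}$; $t'_{12}$ is another, for instance. Such curves lie in the algebra generated by $\mathcal{G}$ only via an induction modulo lower terms, which is Lemma \ref{lem:chop-up}(ii).
\item The boundary-parallel curve is not a scalar in $\Sigma_{2,1}$. It meets each $\mathbf{z}_k$ twice and is a nontrivial central element; it becomes a scalar only in $\Sigma_{2,0}$.
\end{itemize}
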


See Convention \ref{conv:symmetry} for the meaning of ``by symmetries".
The fact that $\mathcal{G}$ generates $\mathcal{S}(\Sigma_{2,1})$ is a special case of \cite[Theorem 1]{Bu99}. We will reprove it in Corollary \ref{cor:generate}.

\begin{thm}\label{thm:basis-1}
The $R$-module $\mathcal{S}(\Sigma_{2,1})$ is freely generated by $\mathcal{C}$, with
$$\mathcal{C}=\big\{t_1^{i_1}t_2^{i_2}t_3^{i_3}t_4^{i_4}at_{12}^{j_1}t_{23}^{j_2}t_{34}^{j_3}t_{14}^{j_4}\colon i_1,\ldots,i_4,j_1,\ldots,j_4\ge 0,\ a\in\mathcal{A}\big\},$$
where $\mathcal{A}$ consists of $1,t_0,t_{123},t_{124},t_{134},t_{234}$ and
\begin{align*}
t_{13}^k,\ \ t_{13}^kt_0,\ \ t_{13}^kt_{123},\ \  t_{13}^kt_{134}, \ \ t_{24}^k, \ \ t_{24}^kt_0, \ \ t_{24}^kt_{124}, \ \ t_{24}^kt_{234} \ \ \ \text{for\ \ }k\ge 1.
\end{align*}
\end{thm}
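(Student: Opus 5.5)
We show separately that $\mathcal{C}$ spans $\mathcal{S}(\Sigma_{2,1})$ and that it is linearly independent. We compare $\mathcal{C}$ with the standard basis of $\mathcal{S}(\Sigma_{2,1})$ consisting of simple multicurves. It is free over $R$ by the usual diamond-lemma argument for the skein relations.

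\textbf{Linear independence.} Encode each simple multicurve by its geometric intersection numbers with a fixed filling family of arcs and curves. A natural choice is the cut arcs $\mathbf{z}_1,\ldots,\mathbf{z}_4$ of Figure \ref{fig:Sigma2-1}, supplemented by the curves $t_1,\ldots,t_4$ (or Dehn--Thurston coordinates adapted to a pants decomposition). Put a filtration (or a lexicographic degree) on $\mathcal{S}(\Sigma_{2,1})$ by these coordinates. The product of simple multicurves is then, up to a unit $q^{k/2}$, their multicurve union plus lower terms whenever the coordinates add. For each monomial $t_1^{i_1}\cdots t_4^{i_4}\,a\,t_{12}^{j_1}t_{23}^{j_2}t_{34}^{j_3}t_{14}^{j_4}$ in $\mathcal{C}$, we compute its leading multicurve. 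The key claim is that $\mathcal{C}\to\{\text{simple multicurves}\}$, monomial $\mapsto$ leading term, is a bijection. Injectivity of this map, together with the unitriangularity of leading terms, gives linear independence. The shape of $\mathcal{A}$ shows which multicurves need a $t_{13}$- or $t_{24}$-type factor: the curves $t_{13}$ and $t_{24}$ intersect, and they cannot both occur. The factors $t_0$, $t_{123}$, etc.\ each occur at most once. This matches the reduction relations (III), which rewrite $t_{13}t_{24}$, $t_{13}t_{234}$, $t_{123}^2$, $t_{123}t_{124}$, $t_{0}^2$ and so on as lower-order terms.

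\textbf{Spanning.} We prove by induction on the filtration degree that every simple multicurve is in the span of $\mathcal{C}$. By the bijection above, each multicurve $\gamma$ equals $q^{k/2}$ times some element of $\mathcal{C}$ plus terms of strictly lower degree, and the induction then closes. To make the induction well founded, we check that the degree takes values in a well-ordered monoid and that only finitely many multicurves lie below a given one. Alternatively, we can argue algebraically. Since $\mathcal{G}$ generates (Corollary \ref{cor:generate}), use the relations (I)--(III) as rewriting rules to reorder any word into the normal form $t_1^{*}\cdots t_4^{*}\,a\,t_{12}^{*}t_{23}^{*}t_{34}^{*}t_{14}^{*}$. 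The commutator relations (II) move letters past each other at the cost of lower-degree terms, and the reduction relations (III) eliminate forbidden adjacent pairs in the middle factor.

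\textbf{Main obstacle.} The hard step is the combinatorial bijection between $\mathcal{C}$ and simple multicurves: classifying multicurves on $\Sigma_{2,1}$ by their coordinates and showing that each is uniquely realized as a leading term. For this we first parametrize multicurves by Dehn--Thurston coordinates relative to the pants decomposition determined by $t_1,t_3$ and a separating curve. Then we compute the leading term of each basic generator, check additivity of leading terms for the products occurring in $\mathcal{C}$ (this requires compatible ordering, which explains why $t_{13}^k$ pairs only with $t_0,t_{123},t_{134}$), and verify surjectivity by a case analysis on the twist and intersection parameters.
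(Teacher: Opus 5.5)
\textbf{There is a genuine gap in the step you call the key claim.} That claim says leading terms give a bijection from $\mathcal{C}$ onto simple multicurves. You do not prove it, and in the framework you set up it is false. Your framework is Dehn--Thurston coordinates $\nu$ for a pants decomposition containing $t_1$, with leading terms additive in $\nu$.

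Since $t_1$ and $t_2$ meet once, the two smoothings $t_{12}$ and $t'_{12}$ of $t_1t_2$ are the Dehn twists $T_{t_1}^{\pm1}(t_2)$. Their coordinates are therefore $\nu(t_2)\pm\nu(t_1)$. This forces a collision:
\begin{itemize}
\item If $\nu(t_{12})=\nu(t_1)+\nu(t_2)$, additivity gives $\mathrm{LT}(t_1t_2)=\mathrm{LT}(t_{12})$.
\item Otherwise $\nu(t_1)+\nu(t_{12})=\nu(t_2)$, and $\mathrm{LT}(t_1t_{12})=\mathrm{LT}(t_2)$.
\end{itemize}
Either way two distinct elements of $\mathcal{C}$ share a leading multicurve, so injectivity fails.

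Surjectivity fails too. All fifteen generators are non-peripheral, so additivity gives every leading term zero boundary-parallel multiplicity. Hence multicurves containing a copy of $\partial\Sigma_{2,1}$ are never hit, although they belong to the multicurve basis.

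Your termination claim also cannot hold. The curves $T_{t_1}^n(t_2)$, $n\in\mathbb{Z}$, have coordinates $\nu(t_2)+n\nu(t_1)$. Any total order compatible with addition is strictly monotone along them, so it has an infinite descending chain. Consequently neither your independence argument nor your first spanning argument goes through: $\mathcal{C}$ is not unitriangular against multicurves in your sense.

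\textbf{What the paper does instead.} It never compares $\mathcal{C}$ with multicurves.

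For spanning, it makes your algebraic alternative precise. It normalizes monomials via Claim \ref{cla:commutator}, controlled by $\|\cdot\|$. It then inducts on $\|\cdot\|^\ast$ with a case analysis on the heavy factor. Besides genuine reductions, this uses displacements such as $t_{13}t_{24}\equiv 2t_0$ and $t_{123}t_{134}\equiv qt_{13}t_0$. These keep $\|\cdot\|^\ast$ fixed, so one must organize the cases so they cannot cycle. Your sketch supplies no order that guarantees termination.

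For independence, the missing idea is to pass to the classical limit. Since $\mathcal{S}(\Sigma)$ is free \cite{Pr99}, hence torsion-free, Lemma \ref{lem:strategy} reduces the claim to linear independence of $\epsilon(\mathcal{C})$ in $\mathbb{C}[\mathcal{X}(F_4)]$. That follows from the invariant-theoretic basis $\mathsf{B}$ of \cite[Theorem 3.4]{Ch25}, to which $\epsilon(\mathcal{C})$ is related by an invertible change of basis. Concretely, (\ref{eq:t13t24}) trades $\mathsf{t}_{13}^{k+1}\mathsf{t}_{24}$ for $\mathsf{t}_{13}^{k}\mathsf{t}_{1234}$, and symmetrically for $\mathsf{t}_{24}$. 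The coefficient $2$ there is harmless over $\mathbb{C}$ but is not a unit in $R$. This is one more reason a naive unit-leading-coefficient argument over $R$ must be set up with great care.
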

This gives a monomial basis for $\mathcal{S}(\Sigma_{2,1})$. It is similar to the one stated as \cite[Theorem 1.3]{Ch25}, but its proof is not a straightforward repetition. Instead, we need to overcome difficulties arising in the positive-genus case.

\begin{thm}\label{thm:presentation-2}
The skein algebra $\mathcal{S}(\Sigma_{2,0})$ is generated by $\mathcal{G}$, and the ideal of defining relations is generated by the three families of relations given in Theorem \ref{thm:presentation-1} together with
\begin{align}
t_{12}t'_{12}-t_{34}t'_{34}&=\overline{q}t_1^2+qt_2^2-\overline{q}t_3^2-qt_4^2,  \label{eq:Sl-0}   \\
t_{34}t_{134}&=qt_4t_{14}+\overline{q}t_3t_{13}+2t_1, \label{eq:Sl-1}  \\
t_{34}t_{234}&=qt_4t_{24}+\overline{q}t_3t_{23}+2t_2, \label{eq:Sl-2}  \\
t_{12}t_{123}&=qt_2t_{23}+\overline{q}t_1t_{13}+2t_3, \label{eq:Sl-3}  \\
t_{12}t_{124}&=qt_2t_{24}+\overline{q}t_1t_{14}+2t_4, \label{eq:Sl-4}  \\
t_4t_{134}&=t_2t_{123}+q(t'_{34}t_{14}-t'_{12}t_{23}),  \label{eq:Sl-13}  \\
t_3t_{234}&=t_1t_{124}+\overline{q}(t'_{34}t_{23}-t'_{12}t_{14}),  \label{eq:Sl-24}  \\
t_1t_{234}-t_3t_{124}&=\overline{q}(t_2t_{134}-t_4t_{123})+q^{\frac{1}{2}}(t_1t_4t_{23}-t_2t_3t_{14}),  \label{eq:Sl-1234}  \\
t_{34}t'_{34}t_{14}&=(\overline{q}t_3^2+qt_4^2)t_{14}+\overline{q}^{\frac{1}{2}}(t'_{34}t_{13}-t_2t_{124}) \nonumber \\
&\hspace{10mm} +q^{\frac{1}{2}}(t'_{12}t_{24}-t_3t_{134})+2t_1t_4,  \label{eq:Sl-14}   \\
t_{12}t'_{12}t_{23}&=(\overline{q}t_1^2+qt_2^2)t_{23}+\overline{q}^{\frac{1}{2}}(t'_{12}t_{13}-t_4t_{234}) \nonumber \\
&\hspace{10mm} +q^{\frac{1}{2}}(t'_{34}t_{24}-t_1t_{123})+2t_2t_3,  \label{eq:Sl-23}  \\
t_{34}t_0&=qt_4t_{124}+\overline{q}t_3t_{123}+2t'_{12}, \label{eq:Sl-12}  \\
t_{12}t_0&=qt_2t_{234}+\overline{q}t_1t_{134}+2t'_{34}, \label{eq:Sl-34}  \\
t_4t_0&=qt'_{34}t_{124}-q^{\frac{1}{2}}t_2t'_{34}t_{14}+\overline{q}^{\frac{1}{2}}t_2t_4t_{134}-\overline{q}t_2t_{13}+t_1t_{23},
\label{eq:Sl-123}  \\
t_3t_0&=\overline{q}t'_{34}t_{123}-\overline{q}^{\frac{1}{2}}t_1t'_{34}t_{23}+q^{\frac{1}{2}}t_1t_3t_{234}-qt_1t_{24}+t_2t_{14},
\label{eq:Sl-124}  \\
t_2t_0&=qt'_{12}t_{234}-q^{\frac{1}{2}}t_4t'_{12}t_{23}+\overline{q}^{\frac{1}{2}}t_2t_4t_{123}-\overline{q}t_4t_{13}+t_3t_{14},
\label{eq:Sl-134}  \\
t_1t_0&=\overline{q}t'_{12}t_{134}-\overline{q}^{\frac{1}{2}}t_3t'_{12}t_{14}+q^{\frac{1}{2}}t_1t_3t_{124}-qt_3t_{24}+t_4t_{23}.
\label{eq:Sl-234}
\end{align}
\end{thm}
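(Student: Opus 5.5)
The plan is to obtain $\Sigma_{2,0}$ from $\Sigma_{2,1}$ by capping the boundary with a disk, and to transport the presentation of Theorem~\ref{thm:presentation-1} along the inclusion. The inclusion $\Sigma_{2,1}\hookrightarrow\Sigma_{2,0}$ induces a surjective algebra homomorphism $\pi\colon\mathcal{S}(\Sigma_{2,1})\to\mathcal{S}(\Sigma_{2,0})$. Surjectivity holds because every framed link in $\Sigma_{2,0}\times[0,1]$ can be isotoped off $\{\text{cap}\}\times[0,1]$. Hence $\mathcal{G}$ generates $\mathcal{S}(\Sigma_{2,0})$, and the ideal of relations is that of Theorem~\ref{thm:presentation-1} together with $\ker\pi$.

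The standard fact I would use is that $\ker\pi$ is the two-sided ideal generated by the differences $\mathbf{l}-\mathbf{l}'$, where $\mathbf{l}'$ is obtained from $\mathbf{l}$ by a \emph{slide} of one strand across the cap. One would prove this by general position: an isotopy in $\Sigma_{2,0}\times[0,1]$ crosses the arc $\{p\}\times[0,1]$, with $p$ the center of the cap, finitely many times, and each crossing is such a slide. Moreover, a single slide of one strand over the hole, followed by skein resolution in $\Sigma_{2,1}$, reduces the generators to elements of the form $x\,(\gamma-\gamma')\,y$, where $\gamma$ is a simple closed curve and $\gamma'$ is its slid version. Equivalently, it suffices to impose that the boundary-parallel curve $\partial$ (and products with it) satisfies the relations coming from sliding a generator $t\in\mathcal{G}$ across the boundary, namely $t\cdot\partial$ versus the bracket expansion of the curve obtained by pushing $t$ around the hole.

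Concretely, I would take each generator $t_{i_1\cdots i_r}$ drawn in the disk $D$ and push its arc across the boundary component. Resolving the result in terms of monomials via relations (I)--(III) and the basis $\mathcal{C}$ of Theorem~\ref{thm:basis-1} should produce exactly \eqref{eq:Sl-1}--\eqref{eq:Sl-234}. For instance, $t_{34}t_{134}$ arises because in $\Sigma_{2,0}$ the curve $t_{134}$ is isotopic to a curve that meets $t_{34}$ minimally in fewer points. Relation \eqref{eq:Sl-0} comes from the boundary curve itself: in $\Sigma_{2,0}$ the separating curve $\partial$ bounds a disk, so $\partial=-q-\overline{q}$. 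Expressing $\partial$ in $\mathcal{S}(\Sigma_{2,1})$ as a polynomial in $\mathcal{G}$, by resolving $t_{12}t'_{12}$, $t_{34}t'_{34}$ and the like, gives \eqref{eq:Sl-0}. Relations \eqref{eq:Sl-13}, \eqref{eq:Sl-24} and \eqref{eq:Sl-1234} come from the sliding of the odd generators.

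The main obstacle is sufficiency: showing that these finitely many relations generate all of $\ker\pi$. I would handle this with a basis argument. Let $J$ be the ideal generated by \eqref{eq:Sl-0}--\eqref{eq:Sl-234}. First, I would show that $\mathcal{S}(\Sigma_{2,1})/J$ is spanned by the image of a subset $\mathcal{C}_0\subset\mathcal{C}$, obtained by using the new relations as rewriting rules that kill leading monomials, for example $t_{12}t_0$, $t_4t_0$ and $t_{34}t_{134}$, under the filtration used for Theorem~\ref{thm:basis-1}. Then I would show that $\pi(\mathcal{C}_0)$ is linearly independent in $\mathcal{S}(\Sigma_{2,0})$. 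For this I would compare leading terms with the basis of multicurves in $\Sigma_{2,0}$, via Dehn--Thurston-type coordinates or the intersection numbers with a pants decomposition, checking that distinct elements of $\mathcal{C}_0$ have distinct leading multicurves. This independence check, together with verifying that the rewriting closes up so that $J$ is also closed under multiplication by generators, is where most of the computational work will lie.
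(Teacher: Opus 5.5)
\noindent Your setup matches the paper: $\pi$ is surjective, so $\mathcal{G}$ generates, and $\ker\pi$ is spanned by handle-slide differences. There is a genuine gap at the step you call the main obstacle: nothing in the proposal shows that \eqref{eq:Sl-0}--\eqref{eq:Sl-234} generate $\ker\pi$. The basis argument is only announced.
\begin{itemize}
\item You never say what $\mathcal{C}_0$ is.
\item You do not show that rewriting modulo the two-sided ideal $J$ closes up. Every overlap of the sixteen new relations with each other and with (I)--(III) would have to be resolved.
\item Linear independence of $\pi(\mathcal{C}_0)$ would need a basis-type theorem for the closed genus-$2$ surface, e.g.\ leading multicurves in Dehn--Thurston coordinates, or a monomial basis of the coordinate ring of the closed-surface character variety. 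You supply neither, and the paper has no such result.
\end{itemize}
Your remark that it ``suffices'' to slide the fifteen generators $t\in\mathcal{G}$ is also unsupported. A slide of a closed curve depends on the choice of band, and you give no mechanism that reduces slides of arbitrary curves to slides of generators.

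You also misread \eqref{eq:Sl-0}. It does not say that $\partial\Sigma_{2,1}$ bounds a disk. It says that the two separating curves $t_{12}t'_{12}-qt_2^2-\overline{q}t_1^2+\alpha$ and $t_{34}t'_{34}-qt_4^2-\overline{q}t_3^2+\alpha$ coincide. These are essential curves in $\Sigma_{2,0}$, each cutting off one handle. The relation $\partial\Sigma_{2,1}=-\alpha$ is only a consequence of the listed relations.

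\noindent The missing idea, which makes any independence argument unnecessary, is to parametrize slides by \emph{arcs}.
\begin{itemize}
\item By Claim~\ref{cla:sliding}, every slide difference is ${\rm Sl}(\mathbf{u})={\rm tr}(\mathbf{u}\mathbf{b}_1)-{\rm tr}(\mathbf{u}\mathbf{b}_2)$ for some arc $\mathbf{u}$ with endpoints $\mathsf{x}_\pm$. By Przytycki's handle-attachment result, $\ker\pi$ is the $R$-span of these elements.
\item The map $\mathbf{u}\mapsto{\rm Sl}(\mathbf{u})$ is left $\mathcal{S}(\Sigma)$-linear on the arc module $\mathcal{E}_{\mathsf{x}_+,\mathsf{x}_-}$.
\item Lemma~\ref{lem:chop-up}(i),(ii), applied with induction on $|\mathbf{u}|$, shows that this module is generated over $\mathcal{S}(\Sigma)$ by sixteen arcs with all $d_k\le 1$. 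There is one for each subset of $\{1,2,3,4\}$, the empty subset giving $\mathbf{e}$.
\end{itemize}
Hence $\ker\pi$ is the left ideal generated by the sixteen elements ${\rm Sl}(\mathbf{a})$, $\mathbf{a}\in X$. The theorem then reduces to computing these pictorially, which yields exactly \eqref{eq:Sl-0}--\eqref{eq:Sl-234}, with ${\rm Sl}(\mathbf{e})=0$ giving \eqref{eq:Sl-0}.
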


The remainder of this paper is structured as follows. In Section 2 we prove Theorem \ref{thm:presentation-1} and Theorem \ref{thm:basis-1}.
The strategy is to verify the relations given in Theorem \ref{thm:presentation-1} directly, then show that using these relations, each element of $\mathcal{S}(\Sigma_{2,1})$ can be transformed into a $R$-linear combination of elements of $\mathcal{C}$, and that $\mathcal{C}$ is $R$-linearly independent. In Section 3 we prove Theorem \ref{thm:presentation-2}, relying on the fact that $\Sigma_{2,0}$ can be obtained by attaching a 2-handle to $\Sigma_{2,1}$. Section 4 is the appendix, collecting a lot of pictorial formulas.

\begin{nota}
\rm When $\mathcal{M}$ is a left module over a ring $\mathcal{R}$, and $\mathcal{X}$ is a subset of $\mathcal{M}$, we write $\mathcal{M}=\mathcal{R}\langle\mathcal{X}\rangle$ to indicate that $\mathcal{X}$ generates $\mathcal{M}$.

Denote $q^{-1}$ by $\overline{q}$, denote $q^{-\frac{1}{2}}$ by $\overline{q}^{\frac{1}{2}}$, and so forth. Let $\alpha=q+\overline{q}$.

In most places, we abbreviate $\Sigma_{2,1}$ to $\Sigma$.

For a finite set $X$, let $\#X$ denote its cardinality.
\end{nota}

\noindent
{\bf Disclosure of AI use}: Nothing relies on AI.

\section{The skein algebra of $\Sigma_{2,1}$}

\subsection{Fundamentals}

\begin{defn}
\rm Let $Z=\sqcup_{i=1}^4Z_i$, with $Z_i=\mathbf{z}_i\times[0,1]$. For a $1$-submanifold $\mathbf{x}\subset\Sigma\times[0,1]$, which is always assumed to intersect $Z$ transversely, let $d_i(\mathbf{x})=\#(\mathbf{x}\cap Z_i)$, and $|\mathbf{x}|=\sum_{i=1}^4d_i(\mathbf{x})$.
\end{defn}

Let $F_4$ denote the free group on $x_1,\ldots,x_4$.

Given an oriented arc $\mathbf{a}\subset\Sigma\times[0,1]$, define ${\rm word}(\mathbf{a})\in F_4$ as follows: walking along $\mathbf{a}$ guided by its orientation, record $x_k$ (resp. $x_k^{-1}$) when passing through $Z_k$ from left (resp. right), let ${\rm word}(\mathbf{a})$ denote the product of all recorded letters.
Note that if $\mathbf{a}$ is simple (i.e. without crossing), then it is determined by ${\rm word}(\mathbf{a})$ up to $\partial$-relative isotopy.

Given $\mathsf{x},\mathsf{y}\in\Sigma\times\{0\}$, let $\mathcal{E}_{\mathsf{x},\mathsf{y}}$ denote the $R$-module generated by $\partial$-relative isotopy classes of 1-submanifolds $\mathbf{x}\subset\Sigma\times[0,1]$ with $$\mathbf{x}\cap(\Sigma\times\{0\})=\partial\mathbf{x}=\{\mathsf{x},\mathsf{y}\},$$
modulo skein relations. By stacking, $\mathcal{E}_{\mathsf{x},\mathsf{y}}$ is a left $\mathcal{S}(\Sigma)$-module.

Given any $\mathsf{x}',\mathsf{y}'\in\Sigma\times\{0\}$, there is an isomorphism of $\mathcal{S}(\Sigma)$-modules
\begin{align}
\mathcal{E}_{\mathsf{x},\mathsf{y}}\cong\mathcal{E}_{\mathsf{x}',\mathsf{y}'}.   \label{eq:identify}
\end{align}
Indeed, regard $\mathsf{x}',\mathsf{y}'\in\Sigma\times\{-1\}$, and take arcs $\mathbf{c},\mathbf{c}'\subset\Sigma\times[-1,0]$ to connect $\mathsf{x},\mathsf{y}$ to $\mathsf{x}',\mathsf{y}'$, respectively, then we can define (\ref{eq:identify}) by sending $\mathbf{x}$ to $f(\mathbf{c}\cup\mathbf{c'}\cup\mathbf{x})$, where $f:\Sigma\times[-1,1]\cong\Sigma\times[0,1]$ is induced by $[-1,1]\to [0,1]$, $s\mapsto(s+1)/2$.

\begin{lem}\label{lem:chop-up}
Let $\mathcal{T}$ denote the $R$-subalgebra of $\mathcal{S}(\Sigma)$ generated by simple curves $\mathbf{s}$ with $d_k(\mathbf{s})\le 1$ 
for each $k$.
\begin{enumerate}
  \item[\rm(i)] For each arc $\mathbf{a}\in\mathcal{E}_{\mathsf{x},\mathsf{y}}$, there exist $a_i\in\mathcal{T}$ and $\mathbf{c}_i\in\mathcal{E}_{\mathsf{x},\mathsf{y}}$ with $d_k(\mathbf{c}_i)\le 1$ for all $i,k$ such that $\mathbf{a}=\sum_ia_i\mathbf{c}_i$ in $\mathcal{E}_{\mathsf{x},\mathsf{y}}$.
  \item[\rm(ii)] For arcs $\mathbf{a},\mathbf{b}\in\mathcal{E}_{\mathsf{x},\mathsf{y}}$, denote $\mathbf{a}\sim\mathbf{b}$
        if there exist $\gamma\in R^\times$, $a_i\in\mathcal{T}$ and
        arcs $\mathbf{c}_i\in\mathcal{A}_{\mathsf{x},\mathsf{y}}$ with $|\mathbf{c}_i|<|\mathbf{a}|$ such that $\mathbf{a}=\gamma\mathbf{b}+\sum_ia_i\mathbf{c}_i$.
        Then $\mathbf{a}\sim\mathbf{b}$ if $\mathbf{a},\mathbf{b}$ are simple with $d_k(\mathbf{a})=d_k(\mathbf{b})\le 1$ for all $k$.
  \item[\rm(iii)] $\mathcal{T}$ is generated by $\mathcal{G}$.
\end{enumerate}
\end{lem}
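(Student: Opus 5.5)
The plan is to prove the three parts together by induction on $|\mathbf{a}|$, working in the cut-open disk $D$ of Figure \ref{fig:Sigma2-1}. The one local tool is this: suppose an arc (or curve) $\mathbf{x}$ meets some $Z_k$ at least twice. Choose two consecutive intersection points $p,p'$ of $\mathbf{x}\cap Z_k$ along $\mathbf{z}_k$. Let $\mathbf{s}$ be the simple loop that runs parallel to the piece of $\mathbf{x}$ between $p$ and $p'$ and is closed up by crossing $Z_k$ once. Place $\mathbf{s}$ above $\mathbf{x}$ and resolve the crossings between them with the skein relation. The product $\mathbf{s}\cdot\mathbf{x}'$, where $\mathbf{x}'$ is obtained from $\mathbf{x}$ by rerouting, then equals a unit multiple of $\mathbf{x}$ plus terms whose total intersection $|\cdot|$ is strictly smaller. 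The difficulty is doing this so that $\mathbf{s}$ itself has $d_j(\mathbf{s})\le 1$ for every $j$, i.e. $\mathbf{s}\in\mathcal{T}$.

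\emph{Part (ii).} Two simple arcs with the same endpoints and the same vector $(d_k)$ with $d_k\le 1$ are determined by their words in $F_4$. These words use each letter $x_k^{\pm1}$ at most once, so they can differ only in the order of the letters and in the signs. I would show that any such arc can be moved to any other by a sequence of "crossing changes" between arcs in the disk picture. Each move is a single application of the Kauffman relation: one resolution gives the target arc (the factor $q^{\pm1/2}$ is the unit $\gamma$), and the other resolution either has strictly smaller $|\cdot|$ or factors as (a simple curve in $\mathcal{T}$) times (an arc with smaller $|\cdot|$). Using the identification (\ref{eq:identify}), it is enough to treat a fixed pair of basepoints.

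\emph{Part (i).} This is the induction on $|\mathbf{a}|$. First use the skein relation to reduce to simple $\mathbf{a}$; resolving crossings never increases $|\cdot|$. If some $d_k(\mathbf{a})\ge 2$, take an innermost pair of consecutive passages of $\mathbf{a}$ through $Z_k$. The segment between them, closed up along $\mathbf{z}_k$, gives a simple loop $\mathbf{s}$. After possibly replacing the segment by a sub-segment, $\mathbf{s}$ passes through each $Z_j$ at most once; this is where innermost choice and the structure of $D$ (four pairs of identified sides) are used, and it is the step I expect to be the main obstacle. I would first try to prove, by inspecting the cyclic order of the eight side-arcs on $\partial D$, that an innermost returning segment realises a reduced word with distinct letters. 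Then $\mathbf{s}\,\mathbf{a}'=\gamma\mathbf{a}+(\text{lower})$, where $\mathbf{a}'$ is $\mathbf{a}$ with the segment excised and $|\mathbf{a}'|<|\mathbf{a}|$. Induction applies to $\mathbf{a}'$ and to the lower terms. Since $\mathcal{T}$ is an algebra, the coefficients remain in $\mathcal{T}$.

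\emph{Part (iii).} A simple closed curve $\mathbf{s}$ with all $d_k\le 1$ is determined, up to conjugacy, by a cyclic word using each of $x_1,\dots,x_4$ at most once with signs. Simplicity of the curve on $\Sigma_{2,1}$ restricts which orders and signs occur. I would enumerate these and check that each is either some $t_{i_1\cdots i_r}$ or agrees with one of them modulo products of elements of $\mathcal{G}$ of smaller $|\cdot|$. This is the closed-curve version of (ii): cut $\mathbf{s}$ at a point on some $\mathbf{z}_k$ to get an arc, apply (ii) to compare it with the arc of the corresponding $t_{i_1\cdots i_r}$, close back up, and induct on $|\mathbf{s}|$ (the base cases being $|\mathbf{s}|\le 1$, i.e. the $t_i$). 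Since $\mathcal{T}$ is generated by such curves, (iii) follows.
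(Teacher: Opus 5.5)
Your plan for (ii) and (iii) is the paper's. You join the two words by sign flips $x_k^{\pm1}\mapsto x_k^{\mp1}$ and transpositions of adjacent letters, each realised by one skein resolution after localising with (\ref{eq:identify}). You then get (iii) by cutting a curve open, applying (ii) and closing up.

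The gap is in (i). Your loop $\mathbf{s}$, parallel to the segment and ``closed up by crossing $Z_k$ once'', exists only when the segment passes through $Z_k$ twice in the \emph{same} direction, i.e.\ has word $x_k\,m\,x_k$. In that case the two ends of the segment lie on opposite sides of $\mathbf{s}$ in a neighbourhood of $\mathbf{s}$. Resolving $\mathbf{s}$ against the shortened arc then does give a unit multiple of $\mathbf{a}$ plus an arc with fewer intersections. But the minimal segment can also come back the other way, with word $x_k\,m\,x_k^{-1}$, and you never treat this. It is the paper's separate case $\epsilon_n=-1$ (Figure \ref{fig:chop-2}).

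In this case the part of the segment between the two passages leaves and returns on the same side of $\mathbf{z}_k$. So the only loop it closes up to has word $m$ and misses $Z_k$, and no manipulation of your type can work:
\begin{enumerate}
\item Everything lives in a regular neighbourhood $N$ of this loop together with a short band across $\mathbf{z}_k$ to the endpoints, and $N$ is an annulus.
\item The relative skein module of $N\times[0,1]$ with two marked points on $\partial N$ is free on crossingless diagrams. In it, the segment is the arc running once around the core, which is not in the $R[\mathbf{s}]$-span of the boundary-parallel arc (your shortened arc).
\item Every configuration in $N$ meeting $Z$ fewer times does lie in that span. Its arc component cannot afford both to cross $Z_k$ and to go around the core, so it is confined to a ball; closed components contribute only powers of $\mathbf{s}$.
\end{enumerate}
So this case needs a further idea that uses curves leaving $N$, for instance curves crossing $Z_k$.

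Conversely, the step you flag as the main obstacle is not one, and the route you propose for it is misdirected. Choose the segment by minimality \emph{along the arc}. A shortest subword of ${\rm word}(\mathbf{a})$ whose first and last letters have the same index $k$ automatically has all other indices distinct and different from $k$. The resulting loop therefore has every $d_j\le 1$, and no inspection of the cyclic order of the sides of $D$ is needed; this is exactly the paper's choice.

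Adjacency of $p,p'$ along $\mathbf{z}_k$, which your first paragraph uses, is the wrong notion. Between two points adjacent on $\mathbf{z}_k$ the arc may cross other $Z_j$ repeatedly, and even cross $Z_k$ again. A segment chosen that way need not have distinct letters.
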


\begin{proof}
(i) If $d_k(\mathbf{a})\le 1$ for all $k$, then the assertion holds automatically.

Suppose $\mathbf{a}\in\mathcal{E}_{\mathsf{x},\mathsf{y}}$ with $d_k(\mathbf{a})>1$ for some $k$. Then it contains a subarc $\mathbf{b}$ such that when one orientation is chosen, ${\rm word}(\mathbf{b})=x_{j_1}x_{j_2}^{\epsilon_2}\cdots x_{j_n}^{\epsilon_n}$, with $j_1=j_n=k$ and $\epsilon_n\in\{\pm1\}$.
Assume $j_1,\ldots,j_{n-1}$ to be distinct; otherwise we can further take a subarc with such property.
The two cases $\epsilon_n=1$ and $\epsilon_n=-1$ are illustrated respectively in Figure \ref{fig:chop-1}, Figure \ref{fig:chop-2}.
Note that in view of (\ref{eq:identify}), the precise positions of $\mathsf{x},\mathsf{y}$ are irrelevant.
In either case, the underlined term is $\mathbf{b}$, and the dotted arcs stand for the remaining parts which may be complicated.
The assertion is clear from the figures.

\begin{figure}[h]
  \centering
  \includegraphics[width=13cm]{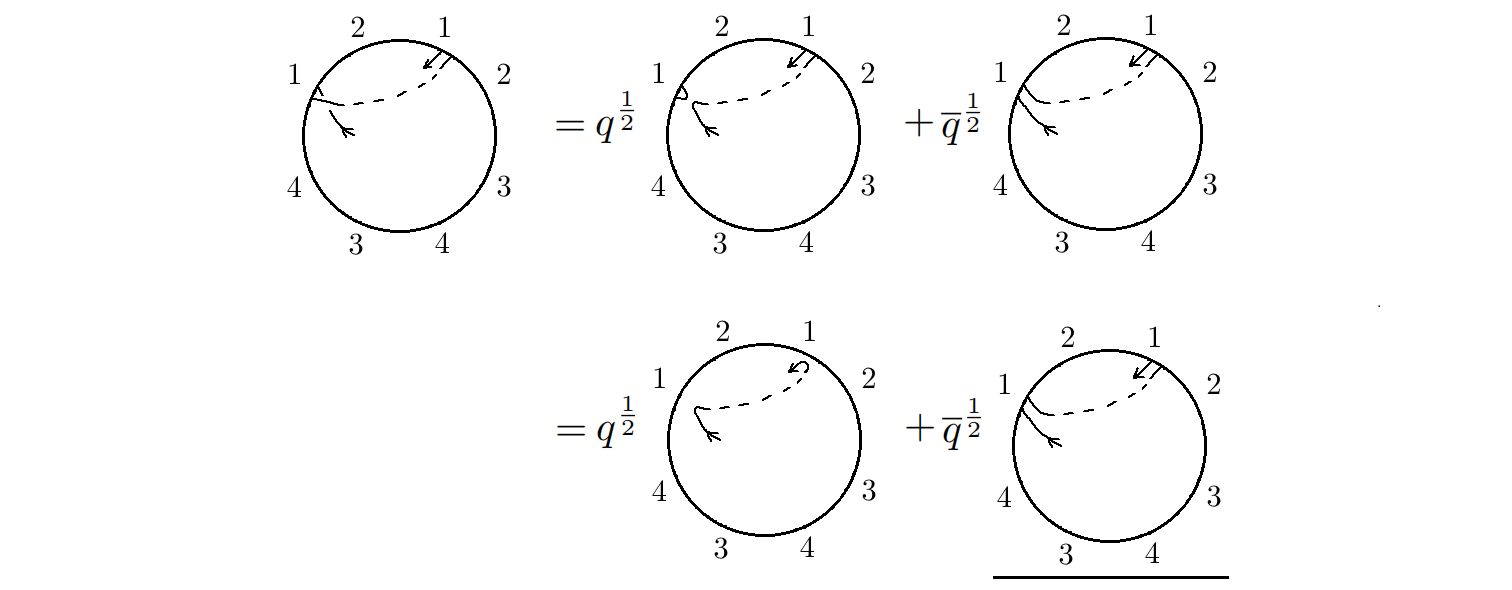}\\
  \caption{}\label{fig:chop-1}
\end{figure}

\begin{figure}[h]
  \centering
  \includegraphics[width=12.7cm]{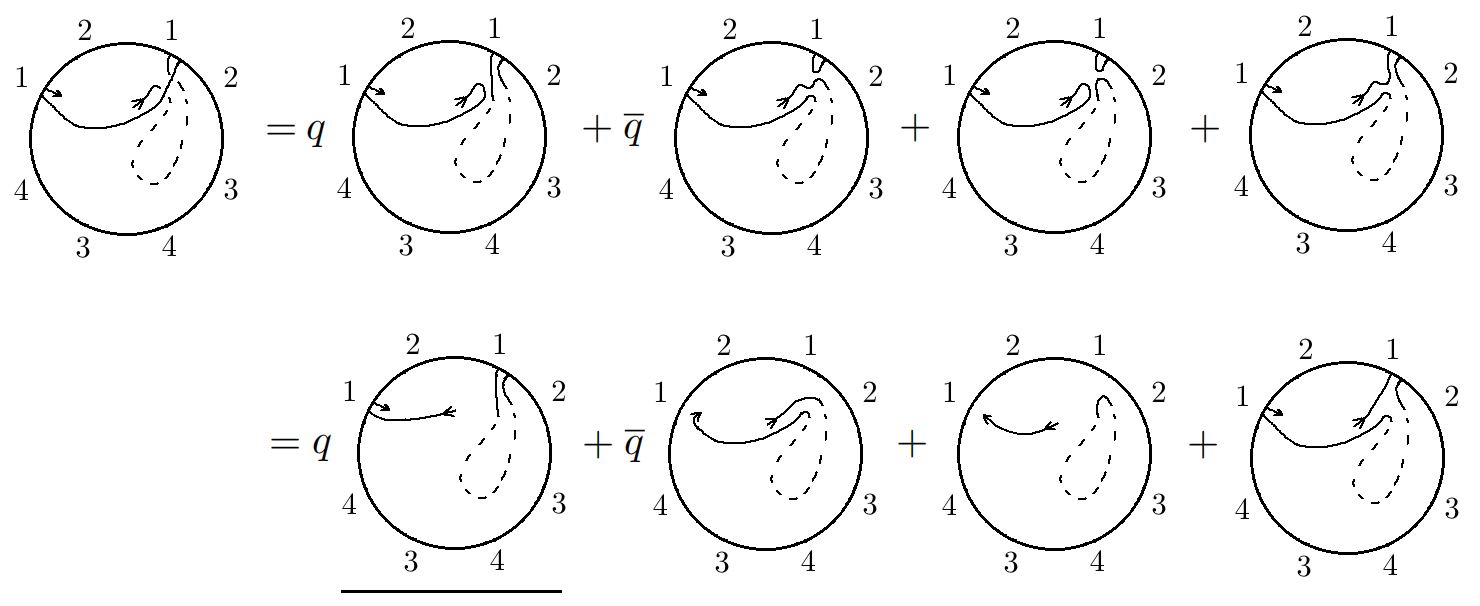}\\
  \caption{}\label{fig:chop-2}
\end{figure}

(ii) We can relate $\mathbf{a}$ to $\mathbf{b}$ via a series of arcs $\mathbf{c}_0,\ldots,\mathbf{c}_n$ with $\mathbf{c}_0=\mathbf{a}$ and $\mathbf{c}_n=\mathbf{b}$, such that for each $i$, ${\rm word}(\mathbf{c}_i)$ can be obtained from ${\rm word}(\mathbf{c}_{i-1})$ by replacing $x_k^{\pm}$ with $x_k^{\mp}$ or replacing $x_j^{\nu}x_k^{\upsilon}$ with $x_k^{\upsilon}x_j^{\nu}$ for some $\nu,\upsilon\in\{\pm1\}$.

Since $\sim$ defines an equivalence relation, we may just assume $|\mathbf{a}|=|\mathbf{b}|\le 2$ and consider two cases:
(1) ${\rm word}(\mathbf{a})=x_k$ and ${\rm word}(\mathbf{b})=x_k^{-1}$; (2) ${\rm word}(\mathbf{a})=x_j^{\nu}x_k^{\upsilon}$ and
${\rm word}(\mathbf{b})=x_k^{\upsilon}x_j^{\nu}$ for any prescribed $\nu,\upsilon\in\{\pm1\}$. The assertion is clear from Figure \ref{fig:relate} which illustrates three typical cases.

(iii) This is an immediate consequence of (ii).
\end{proof}

\begin{figure}[h]
  \centering
  \includegraphics[width=13cm]{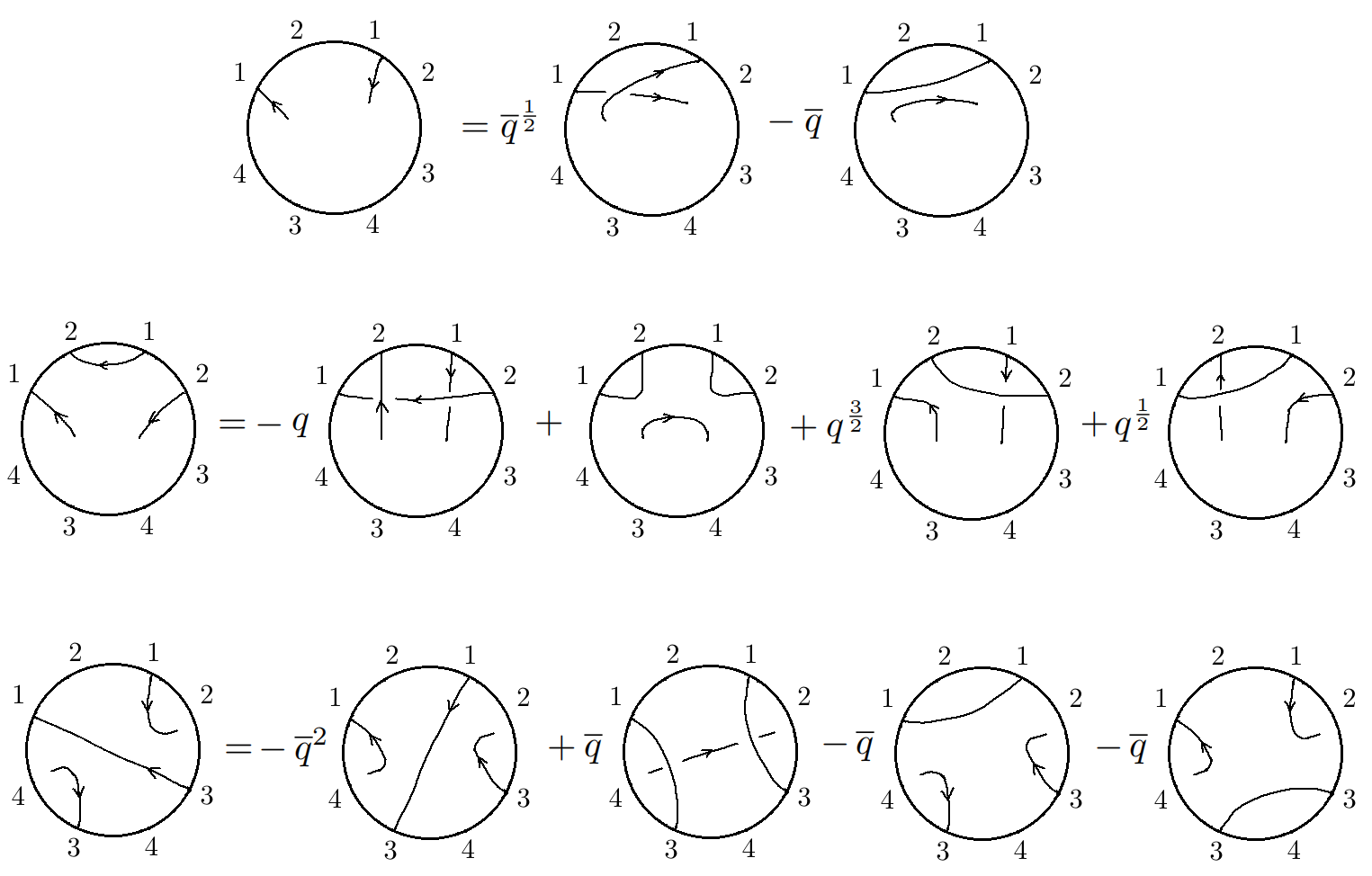}\\
  \caption{}\label{fig:relate}
\end{figure}

\begin{cor}\label{cor:generate}
$\mathcal{G}$ generates $\mathcal{S}(\Sigma)$.
\end{cor}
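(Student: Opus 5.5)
By Lemma \ref{lem:chop-up}(iii), the subalgebra $\mathcal{T}$ is generated by $\mathcal{G}$. It therefore suffices to show $\mathcal{S}(\Sigma)=\mathcal{T}$. Since $\mathcal{S}(\Sigma)$ is spanned over $R$ by multicurves, and a multicurve is a product of its components (the components are disjoint, so stacking them in any order gives the same element), it is enough to show that every simple closed curve $\mathbf{s}\subset\Sigma\times(0,1)$ lies in $\mathcal{T}$. We argue by induction on $|\mathbf{s}|$. If $d_k(\mathbf{s})\le 1$ for every $k$, then $\mathbf{s}\in\mathcal{T}$ by definition. This includes the case $|\mathbf{s}|=0$: such a curve lives in the disk $D$, so it is either trivial (equal to $-q-\overline{q}\in R$) or empty.

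For the inductive step, suppose $d_k(\mathbf{s})\ge 2$ for some $k$. We want to reduce $\mathbf{s}$ to arcs and then apply Lemma \ref{lem:chop-up}(i).

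\emph{Cutting open.} Choose a point of $\mathbf{s}$ lying on its highest level, push a small subarc of $\mathbf{s}$ around this point down to $\Sigma\times\{0\}$, and delete it. This produces an arc $\mathbf{a}\in\mathcal{E}_{\mathsf{x},\mathsf{y}}$ with $\mathsf{x},\mathsf{y}$ close together and with $d_k(\mathbf{a})=d_k(\mathbf{s})$ for all $k$. Re-closing is done by the map $\mathcal{E}_{\mathsf{x},\mathsf{y}}\to\mathcal{S}(\Sigma)$ that attaches a short arc $\mathbf{e}$ in $\Sigma\times\{0\}$ joining $\mathsf{x}$ to $\mathsf{y}$. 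This map is left $\mathcal{S}(\Sigma)$-linear, because stacking happens above and the attached arc $\mathbf{e}$ sits at the bottom level.

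\emph{Applying the lemma.} Lemma \ref{lem:chop-up}(i) gives $\mathbf{a}=\sum_i a_i\mathbf{c}_i$ with $a_i\in\mathcal{T}$ and $d_k(\mathbf{c}_i)\le 1$ for all $i,k$. Closing up, we get
$$\mathbf{s}=\sum_i a_i\,(\mathbf{c}_i\cup\mathbf{e}).$$
Each closed-up $\mathbf{c}_i\cup\mathbf{e}$ is a possibly crossed knot satisfying $d_k\le 1$ for all $k$.

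\emph{Resolving crossings.} Resolve every crossing of $\mathbf{c}_i\cup\mathbf{e}$ by the skein relations. Resolution does not create new intersections with $Z$, so each resulting simple multicurve $\mathbf{m}$ still satisfies $\sum_{\text{components}}d_k\le 1$ for every $k$. Every component of $\mathbf{m}$ therefore lies in $\mathcal{T}$, and so does $\mathbf{m}$. Hence $\mathbf{s}\in\mathcal{T}$.

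Note that the induction hypothesis is not actually needed here: Lemma \ref{lem:chop-up}(i) does the full reduction in one step. One can still phrase the argument as an induction on $|\mathbf{s}|$ for safety.

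The main point requiring care is the compatibility of cutting and closing. The cut point must be taken at the top of $\mathbf{s}$, or more precisely the arc $\mathbf{e}$ must sit below everything in a consistent way, so that closing really is a left-module map sending $\mathbf{a}$ back to $\mathbf{s}$. With the convention that $\mathcal{E}_{\mathsf{x},\mathsf{y}}$ has its endpoints on $\Sigma\times\{0\}$ and $\mathcal{S}(\Sigma)$ acts by stacking on top, this works. The only thing to check is that the isomorphism (\ref{eq:identify}), used inside Lemma \ref{lem:chop-up}, commutes with closing up. That holds provided the connecting arcs $\mathbf{c},\mathbf{c}'$ and $\mathbf{e}$ are chosen so that they close up to an arc isotopic to $\mathbf{e}$ below the picture.
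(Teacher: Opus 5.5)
Your proof is correct, but it is routed differently from the paper's. The paper does not invoke Lemma \ref{lem:chop-up}(i) as a black box. It works directly on the closed curve: it picks a subarc $\mathbf{b}\subset\mathbf{s}$ with ${\rm word}(\mathbf{b})=x_k\cdots x_k^{\pm1}$ and applies the single local chopping move from the proof of (i). This replaces $\mathbf{s}$ by $\sum_i a_i\mathbf{k}_i$ with $|\mathbf{k}_i|<|\mathbf{s}|$. It then resolves the crossings of each $\mathbf{k}_i$ into products of simple curves of smaller $|\cdot|$ and repeats, so the argument is an induction on $|\mathbf{s}|$.

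You instead cut $\mathbf{s}$ open to an arc, apply (i) wholesale in $\mathcal{E}_{\mathsf{x},\mathsf{y}}$, and push the identity back through the closure map. That map is a well-defined left $\mathcal{S}(\Sigma)$-module homomorphism, since the attached arc $\mathbf{e}$ sits in a collar below everything. So the induction lives entirely inside Lemma \ref{lem:chop-up}(i), and crossings are resolved only once at the end. Your route makes the module structure on $\mathcal{E}_{\mathsf{x},\mathsf{y}}$ do the work. The paper's route avoids the cut/close formalism and needs only the local move of Figures \ref{fig:chop-1} and \ref{fig:chop-2}, not the full statement of (i).

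Some of the care you worry about at the end is unnecessary.
\begin{itemize}
\item Lemma \ref{lem:chop-up}(i) is an identity in the fixed module $\mathcal{E}_{\mathsf{x},\mathsf{y}}$. Any well-defined module map preserves it, so you need not check that (\ref{eq:identify}) commutes with closing.
\item Since $\mathbf{s}$ is crossingless, the vertical segment below any point of $\mathbf{s}$ misses the rest of $\mathbf{s}$. The relevant condition is that nothing lies below the cut point; the ``highest level'' choice is beside the point.
\item You do need the cut point chosen off $\bigcup_k\mathbf{z}_k$, so that $d_k(\mathbf{e})=0$.
\item The $\mathbf{c}_i$ may carry closed components, so $\mathbf{c}_i\cup\mathbf{e}$ need not be a knot. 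Your bound on $d_k$ summed over components already covers this.
\end{itemize}
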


\begin{proof}
By Lemma \ref{lem:chop-up} (iii), it suffices to show $\mathbf{l}\in\mathcal{T}$ for each link $\mathbf{l}$.
Using skein relations, we can write $\mathbf{l}$ as a $R$-linear combination of multicurves, each being a product of simple curves.
So we may just consider a simple curve $\mathbf{s}$.

If $d_k(\mathbf{s})\le 1$ for all $k$, then $\mathbf{s}\in\mathcal{T}$ already. Otherwise, $d_k(\mathbf{s})>1$ for some $k$. Then similarly as in the proof of the Lemma \ref{lem:chop-up} (i), we can take an arc $\mathbf{b}\subset\mathbf{s}$ with
${\rm word}(\mathbf{b})=x_{k}\cdots x_k^{\pm1}$, and replace it by $\sum_ia_i\mathbf{c}_i$ for some $a_i\in\mathcal{T}$ and arcs $\mathbf{c}_i$ with $|\mathbf{c}_i|<\mathbf{b}$. Accordingly, $\mathbf{s}$ is replaced by $\sum_ia_i\mathbf{k}_i$ for some knots $\mathbf{k}_i$ with $|\mathbf{k}_i|<|\mathbf{s}|$. Resolving the crossings of $\mathbf{k}_i$, to write $\mathbf{k}_i$ as a $R$-linear combination of products of simple curves. Repeating this process, ultimately we can see $\mathbf{s}\in\mathcal{T}$.
\end{proof}

\begin{figure}[h]
  \centering
  \includegraphics[width=12cm]{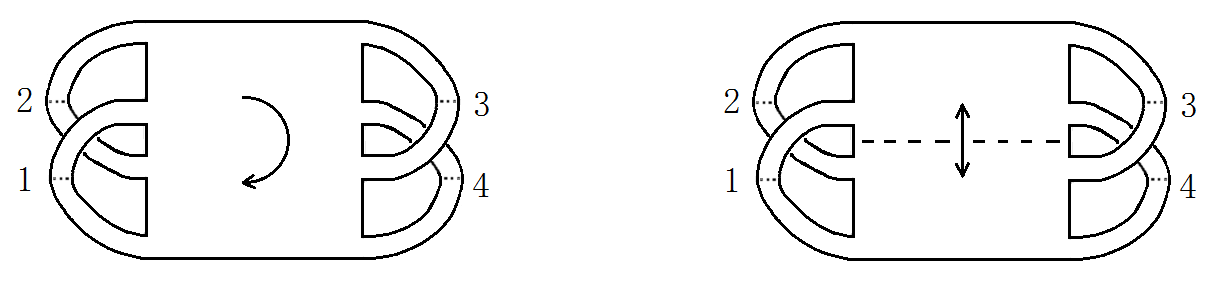}\\
  \caption{Left: the rotation $\sigma$. Right: the reflection $\tau$.}\label{fig:symmetry}
\end{figure}

Let $\sigma$ denote the rotation of $\Sigma$ by $\pi$, and let $\tau$ denote the reflection of $\Sigma$ along the horizontal line.
Both are illustrated in Figure \ref{fig:symmetry}.

Clearly, $\sigma$ induces a $R$-algebra involution $\sigma_\ast:\mathcal{S}(\Sigma)\to\mathcal{S}(\Sigma)$.
Since $\tau$ switches the types of each crossing of each link in $\Sigma\times[0,1]$, there is a unique ring involution $\tau_\ast:\mathcal{S}(\Sigma)\to\mathcal{S}(\Sigma)$ which is compatible with skein relations, namely, $\tau_\ast(q\mathbf{l})=\overline{q}\tau_\ast(\mathbf{l})$ for each link $\mathbf{l}$.

The morphism $\sigma_\ast$ fixes $t_{13},t_{24},t_0$, and permutes the other generators as
\begin{align*}
t_1\leftrightarrow t_3, \quad  t_2\leftrightarrow t_4, \quad  t_{12}\leftrightarrow t_{34}, \quad  t_{23}\leftrightarrow t_{14},  \quad
t_{234}\leftrightarrow t_{124}, \quad t_{134}\leftrightarrow t_{123}.
\end{align*}
The morphism $\tau_\ast$ fixes $t_{12}, t_{34}, t_0$, and permutes the other generators as
\begin{align*}
t_1\leftrightarrow t_2, \quad  t_3\leftrightarrow t_4, \quad
t_{23}\leftrightarrow t_{14}, \quad  t_{13}\leftrightarrow t_{24}, \quad t_{234}\leftrightarrow t_{134}, \quad  t_{124}\leftrightarrow t_{123}.
\end{align*}

\begin{conv}\label{conv:symmetry}
\rm When phrasing ``by symmetry", we shall mean acting via one of $\sigma_\ast$, $\tau_\ast$ and $\sigma_\ast\tau_\ast=\tau_\ast\sigma_\ast$.
\end{conv}

For convenience, introduce some auxiliary elements in Figure \ref{fig:element}.

\begin{figure}[h]
  \centering
  \includegraphics[width=12.2cm]{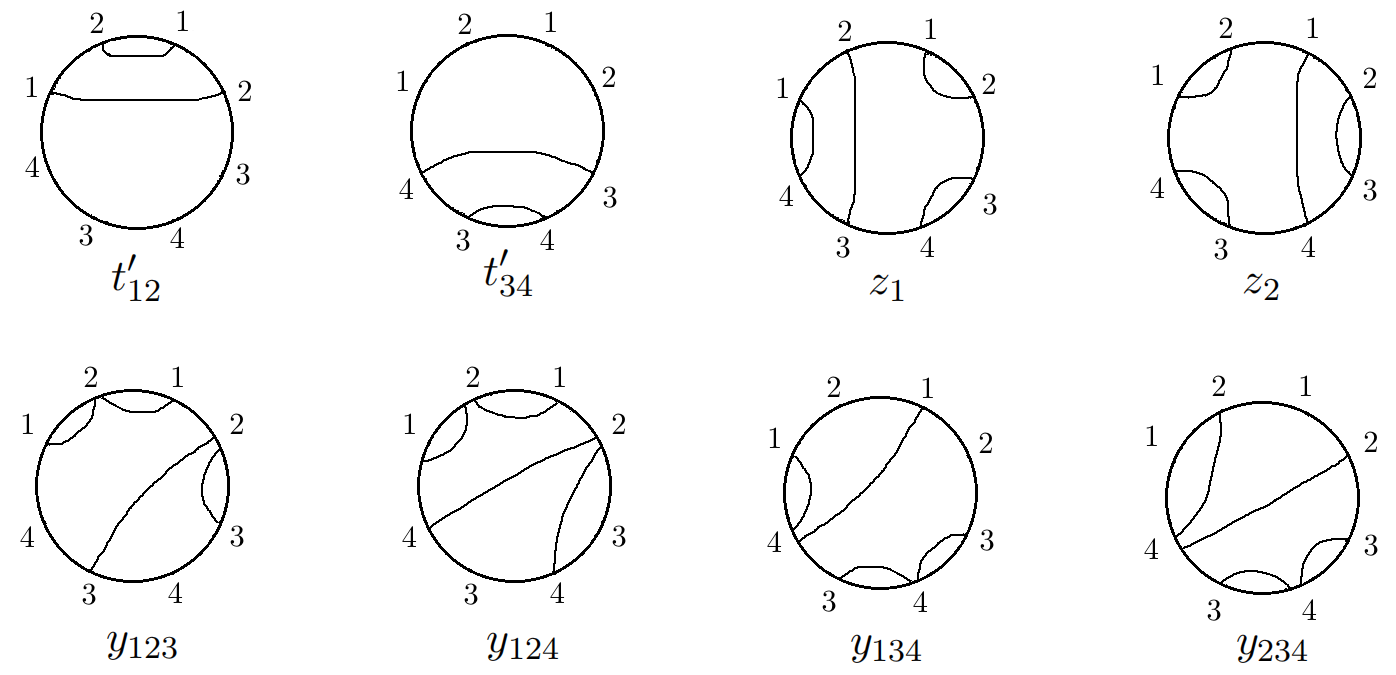}\\
  \caption{}\label{fig:element}
\end{figure}

\begin{figure}[h]
  \centering
  \includegraphics[width=12.7cm]{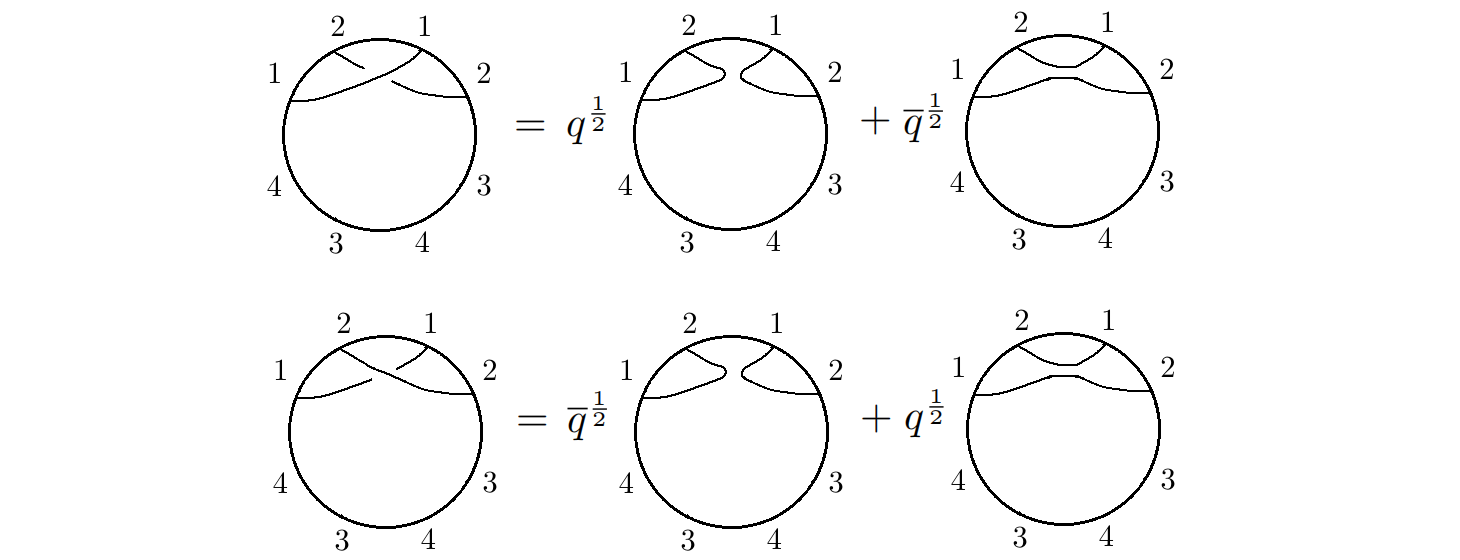}\\
  \caption{Expanding $t_1t_2$ and $t_2t_1$.}\label{fig:t1-vs-t2}
\end{figure}

From Figure \ref{fig:t1-vs-t2} we see
\begin{align}
t'_{12}=q^{\frac{1}{2}}t_1t_2-qt_{12}=\overline{q}^{\frac{1}{2}}t_2t_1-\overline{q}t_{12}.
\end{align}
By symmetry,
\begin{align}
t'_{34}=q^{\frac{1}{2}}t_3t_4-qt_{34}=\overline{q}^{\frac{1}{2}}t_4t_3-\overline{q}t_{12}.
\end{align}

We will frequently use formulas such as the ones shown in Figure \ref{fig:element-2}.

\begin{figure}[h]
  \centering
  \includegraphics[width=11cm]{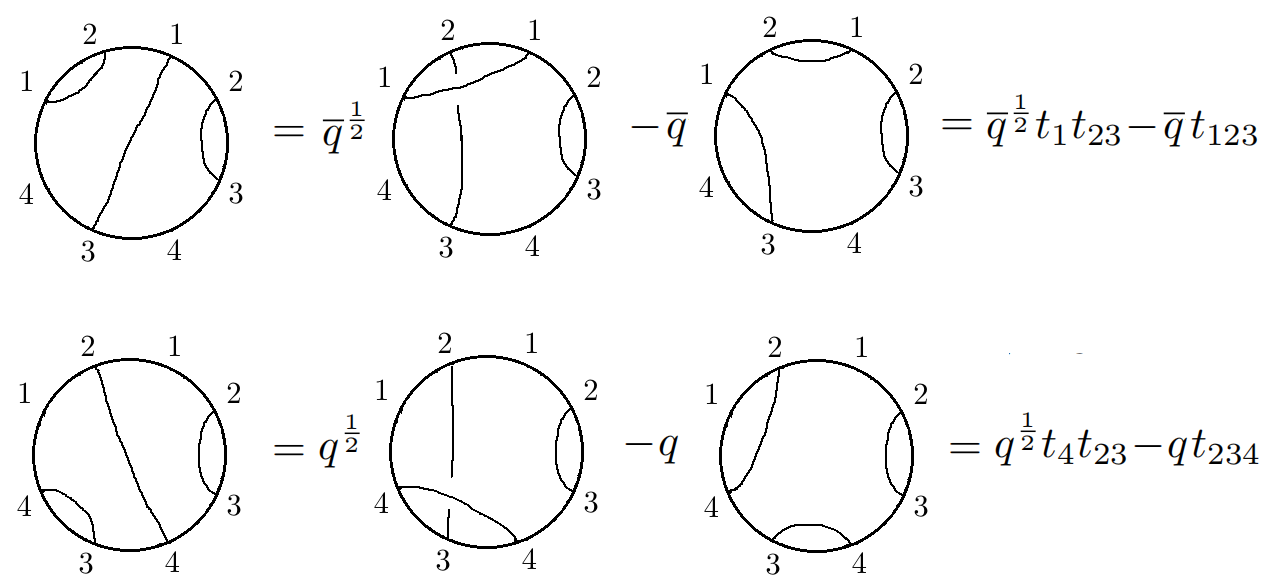}\\
  \caption{}\label{fig:element-2}
\end{figure}

\begin{figure}[h]
  \centering
  \includegraphics[width=11cm]{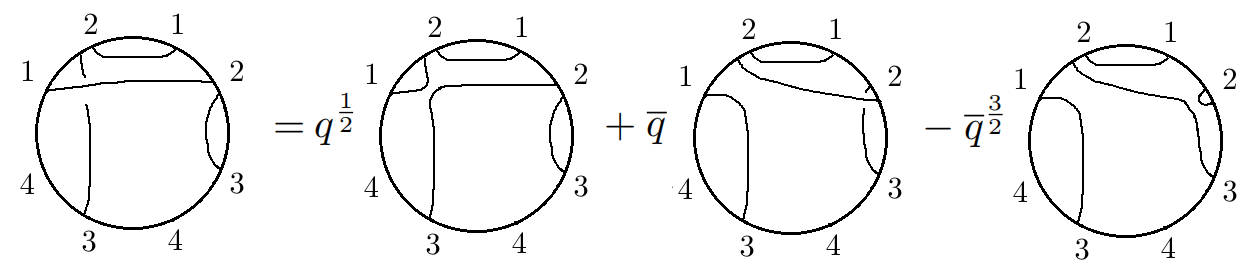}\\
  \caption{$t'_{12}t_{23}=q^{\frac{1}{2}}y_{123}+\overline{q}t_2t_{123}-\overline{q}^{\frac{3}{2}}t_{13}$.}\label{fig:deducing-element-1}
\end{figure}

\begin{figure}[h]
  \centering
  \includegraphics[width=11cm]{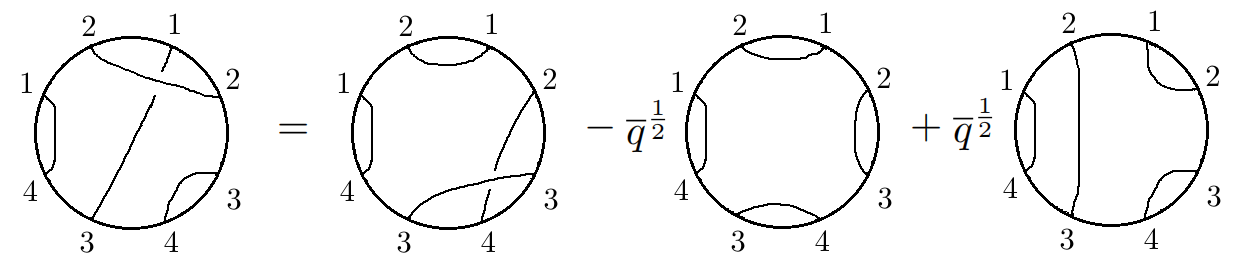}\\
  \caption{$t_2(\overline{q}^{\frac{1}{2}}t_3t_{14}-\overline{q}t_{134})=t_3t_{124}-\overline{q}^{\frac{1}{2}}t_0+\overline{q}^{\frac{1}{2}}z_1$.}
  \label{fig:deducing-element-2}
\end{figure}

As illustrated in Figure \ref{fig:deducing-element-1}, we have
\begin{align}
y_{123}=\overline{q}^{\frac{1}{2}}t'_{12}t_{23}-\overline{q}^{\frac{3}{2}}t_2t_{123}+\overline{q}^2t_{13}.  \label{eq:y123}
\end{align}
Similarly,
\begin{align}
y_{124}&=\overline{q}^{\frac{1}{2}}t'_{12}t_{24}-\overline{q}^{\frac{3}{2}}t_2t_{124}+\overline{q}^2t_{14},  \\
y_{134}&=\overline{q}^{\frac{1}{2}}t'_{34}t_{14}-\overline{q}^{\frac{3}{2}}t_4t_{134}+\overline{q}^2t_{13},  \\
y_{234}&=\overline{q}^{\frac{1}{2}}t'_{34}t_{24}-\overline{q}^{\frac{3}{2}}t_4t_{234}+\overline{q}^2t_{23}.
\end{align}

As illustrated in Figure \ref{fig:deducing-element-2}, we have
\begin{align}
z_1=t_0-\overline{q}^{\frac{1}{2}}t_2t_{134}-q^{\frac{1}{2}}t_3t_{124}+t_2t_3t_{14}.  \label{eq:z1}
\end{align}
By symmetry,
\begin{align}
z_2=t_0-q^{\frac{1}{2}}t_1t_{234}-\overline{q}^{\frac{1}{2}}t_4t_{123}+t_1t_4t_{23}.  \label{eq:z2}
\end{align}

\subsection{Verifying the relations}\label{sec:verify}

The commuting relations are easy to verify. The details are omitted.

\begin{figure}[h]
  \centering
  \includegraphics[width=12.5cm]{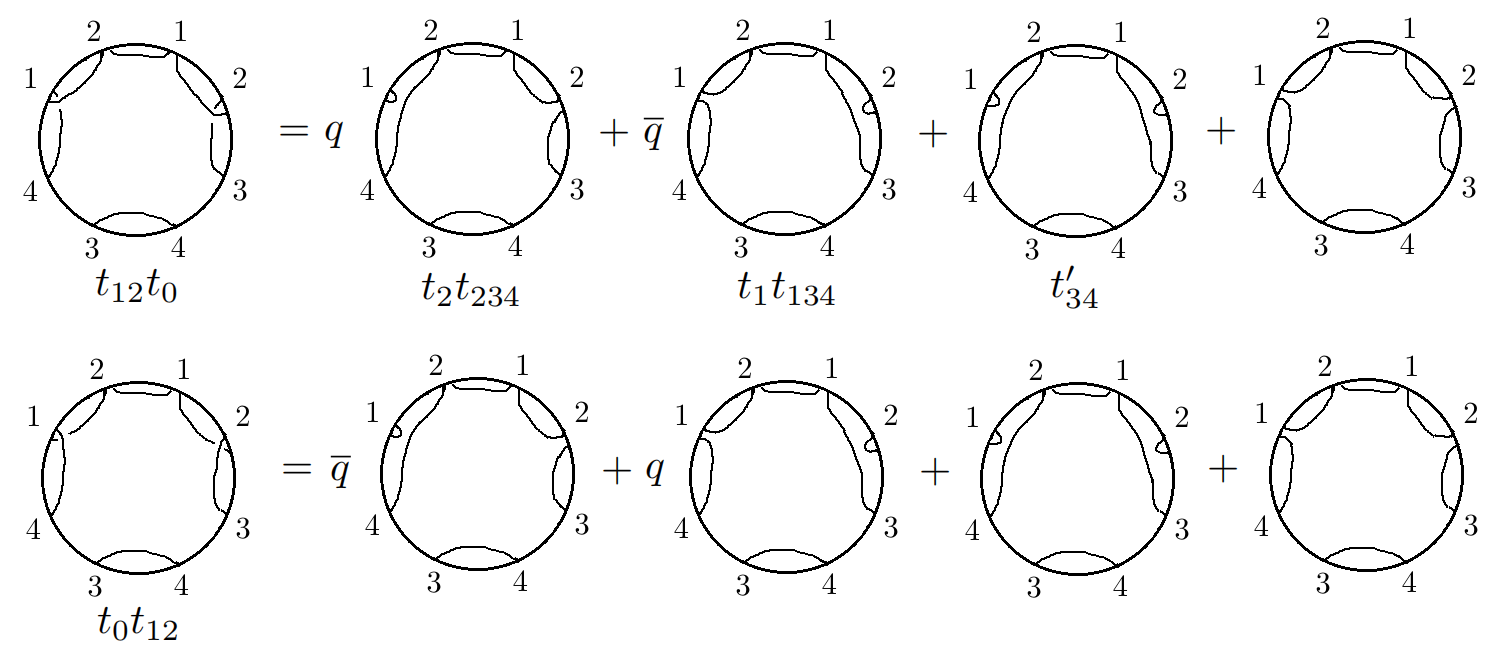}\\
  \caption{Expanding $t_{12}t_0$ and $t_0t_{12}$.}\label{fig:t12-vs-t0}
\end{figure}

\begin{figure}[h]
  \centering
  \includegraphics[width=12.5cm]{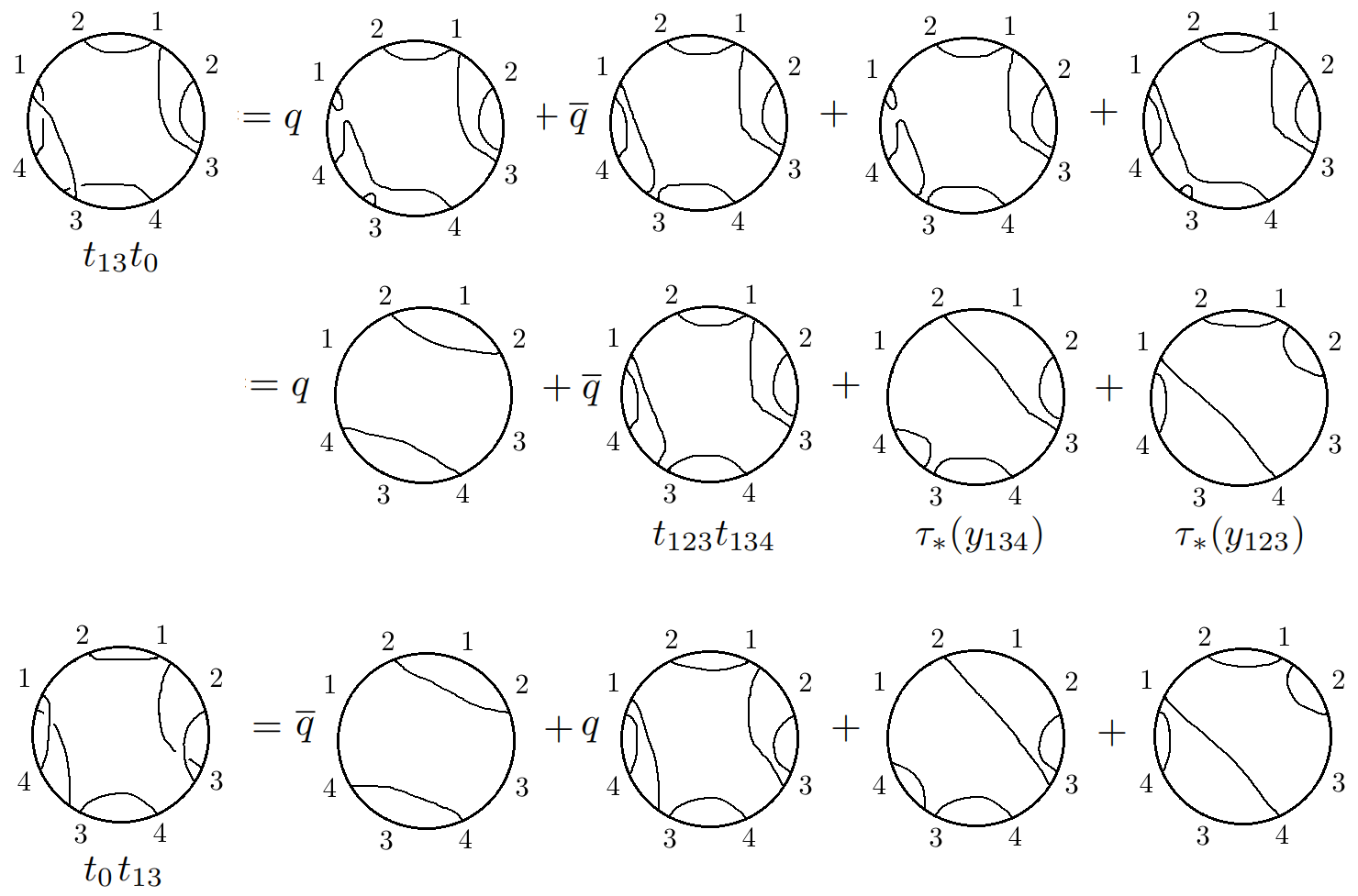}\\
  \caption{Expanding $t_{13}t_0$ and $t_0t_{13}$.}\label{fig:t13-vs-t0}
\end{figure}

\begin{proof}[Proof of commutator relations]
Comparing the two equations in Figure \ref{fig:t1-vs-t2} and eliminating $t'_{12}$, we obtain
$t_1t_2=\overline{q}t_2t_1+(q^{\frac{1}{2}}-\overline{q}^{\frac{3}{2}})t_{12}$.

Taking the difference between the two equations in Figure \ref{fig:t12-vs-t0} yields (\ref{eq:t12-vs-t0}).

Each of the other equations can be deduced similarly as one of these two equations, except (\ref{eq:t13-vs-t0}) which requires more explanation.

Comparing the equations in Figure \ref{fig:t13-vs-t0} and eliminating $t_{123}t_{134}$, we obtain
\begin{align*}
t_{13}t_0=\overline{q}^2t_0t_{13}+(q-\overline{q}^3)t_2t_4+(1-\overline{q}^2)(\tau_\ast(y_{134})+\tau_\ast(y_{123})),
\end{align*}
and then obtain (\ref{eq:t13-vs-t0}) by noticing
\begin{align*}
\tau_\ast(y_{134})&=q^{\frac{1}{2}}t'_{34}t_{23}-q^{\frac{3}{2}}t_3t_{234}+q^2t_{24},   \\
\tau_\ast(y_{123})&=q^{\frac{1}{2}}t'_{12}t_{14}-q^{\frac{3}{2}}t_1t_{124}+q^2t_{24}.
\end{align*}
\end{proof}

\begin{proof}[Proof of reduction relations]
From Figure \ref{fig:t13-times-t24} we see
$$t_{13}t_{24}=qt_{12}t_{34}+\overline{q}t_{14}t_{23}+z_1+z_2,$$
which, by (\ref{eq:z1}), (\ref{eq:z2}), is just (\ref{eq:t13t24}).

The deduction of (\ref{eq:t13t234}) is shown in Figure \ref{fig:t13-times-t234}.

By Figure \ref{fig:t123-times-t123},
\begin{align*}
t'_{12}t_{23}t_{13}&=(qt_1t_{23}-q^{\frac{1}{2}}t_{123})t_{123}-q^{\frac{5}{2}}t_{23}^2-q^{\frac{1}{2}}t_1^2-q^{\frac{3}{2}}(-\alpha) \\
&\ \ \ \ +(t_2t_{13}-q^{\frac{1}{2}}t_{123})t_{123}-q^{\frac{1}{2}}t_2^2-\overline{q}^{\frac{3}{2}}t_{13}^2-\overline{q}^{\frac{1}{2}}(-\alpha)
+q^{\frac{1}{2}}t_3t_{12}t_{123}   \\
&\ \ \ \ -q^{\frac{3}{2}}t_2t_3t_{23}-\overline{q}^{\frac{1}{2}}t_1t_3t_{13}-q^{\frac{1}{2}}t_3^2+q^{\frac{1}{2}}t_{123}^2
+\overline{q}^{\frac{1}{2}}t'_{12}t_{12},
\end{align*}
which implies (\ref{eq:t123t123}).

By Figure \ref{fig:t123-times-t124},
\begin{align*}
t_{14}t'_{12}t_{23}&=qt_{12}t_0-q^2t_2t_{234}-t_1t_{134}-qt'_{34}+t_1t_{23}t_{124}-q^{\frac{1}{2}}t_{24}t_{23}+t_{34} \\
&\ \ \ \ +t_2t_{14}t_{123}-q^{\frac{1}{2}}t_{124}t_{123}-\overline{q}^{\frac{1}{2}}t_{13}t_{14}  \\
&=qt_{12}t_0-q^2t_2t_{234}-t_1t_{134}+t_1t_{23}t_{124}-q^{\frac{1}{2}}t_{24}t_{23} \\
&\ \ \ \ +t_2t_{14}t_{123}-q^{\frac{3}{2}}t_{123}t_{124}-\overline{q}^{\frac{1}{2}}t_{13}t_{14}+2q^2t_{34}-q^{\frac{3}{2}}t_3t_4,
\end{align*}
which implies (\ref{eq:t123t124}).

Equation (\ref{eq:t123t134}) follows from the upper equation in Figure \ref{fig:t13-vs-t0}, which reads
\begin{align*}
t_{13}t_0&=qt_2t_4+\overline{q}t_{123}t_{134}+\tau_\ast(y_{134})+\tau_\ast(y_{123}) \\
&=qt_2t_4+\overline{q}t_{123}t_{134}+(q^{\frac{1}{2}}t'_{34}t_{23}-q^{\frac{3}{2}}t_3t_{234}+q^2t_{24})  \\
&\ \ \ \ +(q^{\frac{1}{2}}t'_{12}t_{14}-q^{\frac{3}{2}}t_1t_{124}+q^2t_{24}).
\end{align*}

Equation (\ref{eq:t123t234}) can be obtained by comparing the formulas in Figure \ref{fig:t123-times-t234}.

By Figure \ref{fig:t123-times-t0},
\begin{align*}
t_1t_{23}t_0&=q(q^{\frac{1}{2}}t_{23}t_{234}-qt_4)+\overline{q}^{\frac{1}{2}}t_{123}t_0+\overline{q}^{\frac{1}{2}}y_{123}t_{134}
-\overline{q}\big(q^{\frac{1}{2}}t_3t_{12}t_0-q^{\frac{3}{2}}t_2t_3t_{234}   \\
&\ \ \ \ -\overline{q}^{\frac{1}{2}}t_1t_3t_{134}-q^{\frac{1}{2}}t_3t'_{34}-qt_{12}t_{124}+q^2t_2t_{24}+t_1t_{14}+qt_4\big).
\end{align*}
With (\ref{eq:y123}) substituted, this implies
\begin{align*}
t_{123}t_0&=(q^{\frac{1}{2}}t_1t_{23}+t_3t_{12})t_0-(\overline{q}^{\frac{1}{2}}t'_{12}t_{23}-\overline{q}^{\frac{3}{2}}t_2t_{123}
+\overline{q}^2t_{13})t_{134}-\overline{q}t_1t_3t_{134}   \\
&\ \ \ \ -(q^2t_{23}+qt_2t_3)t_{234}-t_3t'_{34}-q^{\frac{1}{2}}t_{12}t_{124}+q^{\frac{3}{2}}t_2t_{24}
+\overline{q}^{\frac{1}{2}}t_1t_{14}+q^{\frac{3}{2}}\alpha t_4,
\end{align*}
then (\ref{eq:t123t0}) follows by using (\ref{eq:t123t134}) and $t_2t_1=q^{\frac{1}{2}}t'_{12}+\overline{q}^{\frac{1}{2}}t_{12}$.

By Figure \ref{fig:t0-times-t0},
\begin{align*}
z_2t_0&=q(q^{\frac{1}{2}}t_4t_{23}-qt_{234})t_{234}-q^2t_4^2-t_{23}^2+q\alpha+\overline{q}(\overline{q}^{\frac{1}{2}}t_1t_{23}-t_{123})t_{123}
-t_{23}^2  \\
&\ \ \ \ -\overline{q}^2t_1^2+\overline{q}\alpha+t_{23}^2+t_{14}(t_{123}t_{234}-qt_1t_4-\overline{q}t_{23}t_0-t_{14}).
\end{align*}
Then (\ref{eq:t0t0}) follows by substituting (\ref{eq:z2}).
\end{proof}

\subsection{Proof of Theorem \ref{thm:presentation-1} and Theorem \ref{thm:basis-1}}

Given a 3-manifold $M$, let $\mathcal{X}(\pi_1(M))$ denote the ${\rm SL}(2,\mathbb{C})$-character variety of $\pi_1(M)$, and define
$$\epsilon:\mathcal{S}(M)\otimes_{q^{1/2}=-1}\mathbb{C}\to\mathbb{C}[\mathcal{X}(\pi_1(M))]$$
by sending a link $\mathbf{l}=\sqcup_{i=1}^m\mathbf{k}_i$ (the $\mathbf{k}_i$ being components) to the function
$$\epsilon(\mathbf{l}):\mathcal{X}(\pi_1(M))\to\mathbb{C}, \qquad  \chi\to{\prod}_{i=1}^m(-\chi([\mathbf{k}_i])),$$
where $[\mathbf{k}_i]$ denotes the conjugacy class determined by $\mathbf{k}_i$.

The following was stated as \cite[Lemma 4.1]{Ch25}:
\begin{lem}\label{lem:strategy}
Suppose $\mathcal{S}(M)$ is torsion-free as a $R$-module. If $\mathcal{S}(M)=R\langle\mathcal{B}\rangle$ and $\epsilon(\mathcal{B})$ is $\mathbb{C}$-linearly independent, then $\mathcal{B}$ is a basis for $\mathcal{S}(M)$.
\end{lem}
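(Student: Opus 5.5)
We prove Lemma \ref{lem:strategy} by a specialization argument: work over the fraction field of $R$, pass to the classical limit $q^{\frac12}=-1$, and then use torsion-freeness to transport the linear independence back to $R$.

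\textbf{Reduction.} Since $\mathcal{B}$ already spans $\mathcal{S}(M)$, it suffices to prove that $\mathcal{B}$ is $R$-linearly independent. Suppose, for contradiction, that there is a nontrivial finite relation
$$\textstyle\sum_{i=1}^n f_i b_i=0,\qquad f_i\in R,\ b_i\in\mathcal{B},$$
with the $b_i$ distinct and not all $f_i$ zero.

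\textbf{Step 1: normalize the coefficients at $q^{\frac12}=-1$.} The ring $R=\mathbb{Z}[q^{\pm\frac12}]$ is a UFD, and $h=q^{\frac12}+1$ is a prime element with $R/(h)\cong\mathbb{Z}$. Let $m\ge 0$ be the largest integer such that $h^m$ divides every $f_i$, and write $f_i=h^m g_i$. Then
$$h^m\Big(\textstyle\sum_i g_i b_i\Big)=0 .$$
Because $\mathcal{S}(M)$ is torsion-free, this forces $\sum_i g_i b_i=0$. By the maximality of $m$, at least one $g_i$ is not divisible by $h$, so $g_i(-1)\neq 0$ in $\mathbb{Z}$.

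A side remark: the same kind of argument shows the integer content is harmless. One could also divide out a common integer factor, but this is unnecessary, since a nonzero element of $\mathbb{Z}$ remains nonzero after tensoring with $\mathbb{C}$.

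\textbf{Step 2: specialize and apply $\epsilon$.} Tensor the relation $\sum_i g_i b_i=0$ with $\mathbb{C}$ along $R\to\mathbb{C}$, $q^{\frac12}\mapsto -1$. In $\mathcal{S}(M)\otimes_{q^{1/2}=-1}\mathbb{C}$ this gives
$$\textstyle\sum_i g_i(-1)\,(b_i\otimes 1)=0 .$$
Applying the map $\epsilon$ yields
$$\textstyle\sum_i g_i(-1)\,\epsilon(b_i)=0$$
in $\mathbb{C}[\mathcal{X}(\pi_1(M))]$, with not all coefficients $g_i(-1)$ equal to zero.

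\textbf{Conclusion.} It remains to check that the $\epsilon(b_i)$ are distinct functions, so that this is genuinely a nontrivial linear relation among elements of $\epsilon(\mathcal{B})$. This holds because $\epsilon(\mathcal{B})$ is assumed to be a linearly independent set, which we read as saying that $\epsilon$ is injective on $\mathcal{B}$ with linearly independent image. The relation above then contradicts the hypothesis. Hence $\mathcal{B}$ is $R$-linearly independent, and therefore a basis of $\mathcal{S}(M)$.

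\textbf{Main obstacle.} The delicate point is Step 1. A relation over $R$ may specialize to the trivial relation at $q^{\frac12}=-1$, which would give no contradiction. Torsion-freeness, combined with the fact that $(h)$ is a prime ideal in the UFD $R$, is exactly what allows us to divide by powers of $h$ until some coefficient survives the specialization.
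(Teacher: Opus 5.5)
Your argument is correct: dividing out the largest common power of $q^{\frac12}+1$ (legitimate by torsion-freeness, since $(q^{\frac12}+1)$ is a prime ideal with $R/(q^{\frac12}+1)\cong\mathbb{Z}$), then specializing at $q^{\frac12}=-1$ and applying $\epsilon$, is the standard proof of this lemma. The paper does not reprove it and simply quotes \cite[Lemma 4.1]{Ch25}. You are also right to read the hypothesis as linear independence of the family $(\epsilon(b))_{b\in\mathcal{B}}$, i.e.\ including injectivity of $\epsilon$ on $\mathcal{B}$; this reading is genuinely needed, since otherwise $\{\emptyset,(2+q^{\frac12})\emptyset\}\subset\mathcal{S}(S^3)$ would be a counterexample.
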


Recall
$$\mathcal{C}=\big\{t_1^{i_1}t_2^{i_2}t_3^{i_3}t_4^{i_4}at_{12}^{j_1}t_{23}^{j_2}t_{34}^{j_3}t_{14}^{j_4}
\colon i_1,\ldots,i_4,j_1,\ldots,j_4\ge 0,\ a\in\mathcal{A}\big\},$$
where
\begin{align*}
\mathcal{A}=\ &\{1,t_0,t_{123},t_{124},t_{134},t_{234}\}  \\
&\cup\{t_{13}^k,\  t_{13}^kt_0, \ t_{13}^kt_{123},\ t_{13}^kt_{134}, \ t_{24}^k, \ t_{24}^kt_0, \ t_{24}^kt_{124}, \ t_{24}^kt_{234} \colon k\ge 1\}.
\end{align*}

For $1\le k_1,\ldots,k_r\le 4$, let $\mathsf{t}_{k_1\cdots k_r}\in\mathbb{C}[\mathcal{X}(F_4)]$ denote the function sending a character $\chi$ to $\chi(x_{k_1}\cdots x_{k_r})$.
By \cite[Theorem 3.4]{Ch25},
$$\mathsf{B}=\big\{\mathsf{t}_1^{i_1}\mathsf{t}_2^{i_2}\mathsf{t}_3^{i_3}\mathsf{t}_4^{i_4}\mathsf{t}_{12}^{j_1}\mathsf{t}_{23}^{j_2}
\mathsf{t}_{34}^{j_3}\mathsf{t}_{14}^{j_4}\mathsf{a}\colon i_1,\ldots,i_4,j_1,\ldots,j_4\ge 0, \ \mathsf{a}\in\mathsf{A}\big\}$$
is a basis for $\mathbb{C}[\mathcal{X}(F_4)]$ as a vector space over $\mathbb{C}$, where
\begin{align*}
\mathsf{A}&=\{1,\mathsf{t}_{13},\mathsf{t}_{24},\mathsf{t}_{13}\mathsf{t}_{24}\}
\cup\big\{\mathsf{t}_{13}^k,\mathsf{t}_{13}^k\mathsf{t}_{24},\mathsf{t}_{24}^k,\mathsf{t}_{24}^k\mathsf{t}_{13}\colon k\ge 2\big\}  \\
&\ \ \ \ \cup\big\{\mathsf{t}_{13}^k\mathsf{t}_{123},\mathsf{t}_{13}^k\mathsf{t}_{134},\mathsf{t}_{24}^k\mathsf{t}_{124},
\mathsf{t}_{24}^k\mathsf{t}_{234}\colon k\ge 0\big\}.
\end{align*}

For $M=\Sigma\times[0,1]$, we have $\pi_1(M)\cong F_4$, which is generated by the loops isotopic to $t_1,\ldots,t_4$.
By definition, $\epsilon(t_{k_1\cdots k_r})=-\mathsf{t}_{k_1\cdots k_r}$.
In virtue of (\ref{eq:t13t24}), it is not difficult to see that $\epsilon(\mathcal{C})$ is related to $\mathsf{B}$ via a linear automorphism.
By \cite[Theorem 2.3 (b)]{Pr99}, $\mathcal{S}(\Sigma)$ is free, so it is torsion-free.
By Lemma \ref{lem:strategy}, the proof of Theorem \ref{thm:basis-1} reduces to showing $\mathcal{S}(\Sigma)=R\langle\mathcal{C}\rangle$.

Set $|t_{k_1\cdots k_r}|=r$; in particular, $|t_0|=4$. Set $|1|=0$ as a convention. Call $t_1,t_2,t_3,t_4$ {\it special}, and call the other generators {\it ordinary}. Among the ordinary generators, call $t_{12}$, $t_{23}$, $t_{34}$, $t_{14}$ {\it light}, and call $t_0$, $t_{13}$, $t_{24}$, $t_{123}$, $t_{124}$, $t_{134}$, $t_{123}$ {\it heavy}.

Given a monomial $u=g_1\cdots g_m$ with $g_i\in\mathcal{G}$, define its {\it degree} by $|u|=\sum_{i=1}^m|g_i|$, let
$|u|_s=\#\{i\colon |g_i|=1\}$, and put
$$\|u\|=(|u|,|u|_s,|\ddot{u}|),   \qquad   \|u\|^\ast=(|u|,|\ddot{u}|),$$
where $\ddot{u}$ is the product of the $g_i$'s with $g_i$ heavy.

Given monomials $u,v$, write $\|v\|\prec \|u\|$ if one of the following conditions holds: (i) $|v|<|u|$;
(ii) $|v|=|u|$ and $|v|_s<|u|_s$; (iii) $|v|=|u|$, $|v|_s=|u|_s$ and $|\ddot{v}|<|\ddot{u}|$.
For instance, $\|t_{12}t_{34}\|\prec\|t_{13}t_{24}\|\prec\|t_1t_{234}\|$, and $\|t_{23}t_0\|\prec\|t_{123}t_{234}\|$.
Write $\|v\|^\ast\prec \|u\|^\ast$ if one of the following conditions holds: (i) $|v|<|u|$; (ii) $|v|=|u|$ and $|\ddot{v}|<|\ddot{u}|$.

From the commutator relations (verified in Section \ref{sec:verify}) we can see
\begin{cla}\label{cla:commutator}
For any $g,g'\in\mathcal{G}$, either $gg'=g'g$, or $gg'-q^{\varepsilon}g'g=\sum_j\beta_jv_j$, for some $\varepsilon\in\mathbb{Z}$, $\beta_j\in R$ and monomials $v_j$ with $\|v_j\|\prec\|gg'\|$; if any $g,g'$ are both ordinary, then also $\|v_j\|^\ast\prec\|gg'\|^\ast$.
\end{cla}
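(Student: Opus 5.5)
This claim is a bookkeeping check over the relations of families (I) and (II), which were verified in Section \ref{sec:verify}. For an unordered pair $\{g,g'\}\subset\mathcal{G}$ there are three possibilities. The pair may commute, which is the case for family (I) and its images under $\sigma_\ast,\tau_\ast,\sigma_\ast\tau_\ast$. The pair may occur, up to symmetry and up to the order of the factors, in family (II). Or the pair may be of the form $\{g,g\}$, which trivially commutes. If a relation is listed as $gg'=q^{\varepsilon}g'g+\cdots$, then multiplying by $q^{-\varepsilon}$ gives the relation for $g'g$, so order does not matter. The symmetries $\sigma_\ast,\tau_\ast$ preserve $|\cdot|$, speciality and heaviness, so they preserve $\|\cdot\|$ and $\|\cdot\|^\ast$. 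Under $\tau_\ast$ the exponent $\varepsilon$ becomes $-\varepsilon$ and the coefficients change, but the form of the statement is unaffected. It therefore suffices to run through the representatives actually displayed in (I) and (II), after first confirming that every one of the $\binom{15}{2}$ pairs is covered by (I), (II), or their symmetric images.

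The plan is to sort the orbits of pairs under the symmetry group $\{1,\sigma_\ast,\tau_\ast,\sigma_\ast\tau_\ast\}$ and then match each orbit to a displayed relation. This is the step most likely to hide a gap, since the check that no pair is missing is combinatorial. I would organize it by $(|g|,|g'|)$. For $(1,1)$, $t_1$ commutes with $t_3,t_4$ and satisfies a relation with $t_2$. For $(1,2)$ the relevant relations are $t_1t_{12}$, $t_1t_{13}$, $t_1t_{14}$, $t_1t_{23}$, $t_1t_{24}$, $t_1t_{34}$. For $(1,3)$ they are $t_1t_{123}$, $t_1t_{124}$, $t_1t_{134}$, $t_1t_{234}$, and for $(1,4)$ the relation is $t_1t_0$. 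I would then continue in the same way for the ordinary pairs, using the action of the symmetries on $\{t_1,\dots,t_4\}$ to reduce every special generator to $t_1$.

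Next I would compare degrees case by case, relation by relation. For a pair with a special factor, $\|gg'\|=(|g|+|g'|,\,|gg'|_s,\,\cdot)$. Every right-hand side term is either a single generator of strictly smaller degree, as in $t_1t_{12}\to t_2$ and $t_1t_0\to t_{234}$, so condition (i) of $\prec$ holds, or else it has the same degree. The relation for $t_{12}t_0$ is an example of the second kind: its right-hand side contains $t_2t_{234}$ and $t_1t_{134}$, which have degree $4+?$. Let me recheck: $|t_{12}t_0|=6$, while $|t_2t_{234}|=4$, so that term is smaller already. In general, every correction term arising from a smoothing has strictly smaller total degree, because smoothing a crossing lowers the number of intersections with $Z$. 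Equal-degree terms appear only in relations such as $t_{13}t_{24}$, where $t_{12}t_{34}$ and $t_{14}t_{23}$ have degree $4=|t_{13}t_{24}|$, $|\cdot|_s=0$, and $|\ddot{v}|=0<4$, so condition (iii) of $\prec$ and condition (ii) of $\prec$ for $\|\cdot\|^\ast$ both hold. The same applies to $t_{13}t_{234}$, where $t_{12}t_4$ has degree $3<5$ and $t_{23}t_{134}$ has degree 5 with heavy part $3<5$, and to $t_{123}t_{234}$, where $t_1t_4$ has degree $2$ and $t_{23}t_0$ has degree 6 with heavy part $4<6$.

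The main obstacle is the long relation (\ref{eq:t13-vs-t0}), which requires $\|v\|\prec\|t_{13}t_0\|=(6,0,6)$. After expanding $t'_{12}=q^{\frac12}t_1t_2-qt_{12}$, its terms are $t_2t_4$, $t_1t_2t_{14}$, $t_{12}t_{14}$, $t_3t_4t_{23}$, $t_{34}t_{23}$, $t_1t_{124}$, $t_3t_{234}$ and $t_{24}$. All of these have degree at most 4, which is less than 6. For the $\|\cdot\|^\ast$ clause the degree drop already suffices. In short, for every displayed relation I would verify one of two things: either each term has smaller total degree, or it has equal degree with the same number of special factors (always zero in these cases) and strictly smaller heavy part. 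This is a finite inspection of the listed relations.
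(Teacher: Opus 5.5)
Your approach matches the paper's, which reads the claim off the commutator relations by inspection. Your reduction to orbit representatives is sound: the symmetries preserve $|\cdot|$, $|\cdot|_s$ and heaviness, and swapping the order of $g,g'$ only rescales by $q^{-\varepsilon}$. If you finish the enumeration you sketch, all $105$ pairs of distinct generators are covered by (I), (II) and their images.

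The gap is in the degree comparison. Your justification that ``smoothing a crossing lowers the number of intersections with $Z$'' is false. Smoothing takes place away from $Z$, so it leaves every $d_k$ unchanged, and the resulting terms can have the same degree as the product. Your final criterion says each term either has smaller degree, or has equal degree, no special factors and strictly smaller heavy part. It fails for four displayed relations and their images:
\[
t_1t_2\to t_{12},\qquad t_1t_{23}\to t_{123},\qquad t_1t_{24}\to t_{124},\qquad t_1t_{234}\to t_0 .
\]
In each case the correction term has the same degree as the product and no special factor, while the product has one or two. Its heavy part is equal ($0=0$ for $t_{12}$) or larger ($0\to 3$, $2\to 3$, $3\to 4$). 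Your sentence about pairs with a special factor ends with ``or else it has the same degree'' and never resolves that case.

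The fix is to use condition (ii) of $\prec$. In these four relations $|v|=|gg'|$ and $|v|_s=0<|gg'|_s$, so $\|v\|\prec\|gg'\|$. These are exactly the relations where $\|v\|^\ast\prec\|gg'\|^\ast$ fails. For example, $\|t_{123}\|^\ast=(3,3)$ while $\|t_1t_{23}\|^\ast=(3,0)$, and $\|t_{12}\|^\ast=\|t_1t_2\|^\ast=(2,0)$. That is why the claim asserts the $\|\cdot\|^\ast$ inequality only for pairs of ordinary generators. If your summary criterion were true, that restriction would be pointless, which is a sign the criterion is wrong.

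For ordinary pairs your checks are correct. The only equal-degree terms occur in the relations for $t_{13}t_{24}$, $t_{13}t_{234}$ and $t_{123}t_{234}$, and condition (iii) handles them as you say. Every other correction term has strictly smaller degree. With condition (ii) added for the four mixed cases, the inspection goes through.
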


Call a monomial {\it normal} if it has the form $t_1^{i_1}\cdots t_4^{i_4}wt_{12}^{j_1}t_{23}^{j_2}t_{34}^{j_4}t_{14}^{j_4}$, where $w$ is a product of heavy generators.

By Claim \ref{cla:commutator}, any monomial $u$ can be written as $\gamma N(u)+\sum_ia_iv_i$, for some $\gamma\in R^\times$, $a_i\in R$ and normal monomials $N(u)$, $v_i$ with $\epsilon(N(u))=\epsilon(u)$ and $\|v_i\|\prec\|u\|$.
Call this process {\it normalization}.

For $\beta\in R$ and monomials $u,u'$, write $u\equiv \beta u'$ and say that $u$ can be {\it displaced} by $\beta u'$, if $\|u\|^\ast=\|u'\|^\ast$ and $u-\beta u'=\sum_i\eta_iv_i$ for some $\eta_i\in R$ and monomials $v_i$ with $\|v_i\|^\ast\prec\|u\|^\ast$.
Write $u\sim 0$ and say that $u$ can be {\it reduced}, if $u=\sum_i\eta_iv_i$ for some $\eta_i\in R$ and monomials $v_i$ with $\|v_i\|^\ast\prec\|u\|^\ast$.

\begin{rmk}
\rm Regard a normal monomial as composed of special, heavy, light parts. We will perform reductions at the leftmost place of the heavy part, so that any $t_i$ produced in the reductions can be incorporated into the special part.
Whenever necessary, we immediately do normalization to each term. Note that when switching $t_1$ with $t_2$ (resp. $t_3$ with $t_4$), the 
``corrected term" $t_{12}$ (resp. $t_{34}$) does not become larger with respect to $\|\cdot\|^\ast$.
\end{rmk}

\begin{lem}\label{lem:reduce}
Each of $t_0^2$, $t_0t_{123},t_0t_{124},t_0t_{134},t_0t_{234}$, $t_{123}t_{24}$, $t_{123}^2$, $t_{123}t_{124}$, $t_{123}t_{234}$ can be reduced;
$t_{13}t_{24}\equiv t_{24}t_{13}\equiv 2t_0$, and $t_{123}t_{134}\equiv qt_{13}t_0$.
\end{lem}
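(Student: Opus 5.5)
The plan is to read each claim off the relations of Theorem \ref{thm:presentation-1}, verified in Section \ref{sec:verify}, together with Claim \ref{cla:commutator}. The key observation is that in $\|\cdot\|^\ast=(|u|,|\ddot{u}|)$, only the total degree and the degree of the heavy part count. So a term is negligible as soon as either its degree is smaller, or its degree is the same and it carries fewer heavy letters. Examples are $t_1t_{23}t_{124}$ against $t_{123}t_{124}$, or $t_{14}t_{23}$ against $t_{13}t_{24}$.

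\textbf{Displacements.} For $t_{13}t_{24}$, I will use (\ref{eq:t13t24}). Every right-hand term except $2t_0$ has degree $4$ but heavy degree less than $4$, or degree below $4$: $t_{12}t_{34}$, $t_{14}t_{23}$ and $t_1t_4t_{23}$ have heavy degree $0$, and $t_1t_{234}$ has heavy degree $3$. Hence $t_{13}t_{24}\equiv 2t_0$. The commutator relation for $t_{13}t_{24}$ in (II) then gives $t_{24}t_{13}\equiv t_{13}t_{24}$, because its correction terms are light. Likewise, (\ref{eq:t123t134}) gives $t_{123}t_{134}\equiv qt_{13}t_0$. Here $qt_{13}t_0$ has $\|\cdot\|^\ast=(6,6)$, and every other term has degree at most $6$ with strictly smaller heavy degree; for instance $t_1t_{124}$ has degree $4$.

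\textbf{Reductions.} For $t_{123}^2$, $t_{123}t_{124}$, $t_{123}t_{234}$ and $t_0^2$, I will inspect (\ref{eq:t123t123}), (\ref{eq:t123t124}), (\ref{eq:t123t234}) and (\ref{eq:t0t0}) directly. In each case every term on the right has the same total degree but contains some light or special letters, or has lower degree. Examples are $t_1t_{23}t_{123}$, $t'_{12}t_{23}t_{13}$ (where $t'_{12}$ expands into $t_1t_2$ and $t_{12}$), and $t_{14}t_{123}t_{234}$ inside $t_0^2$. All of these have heavy degree strictly less than the full degree, so $\sim 0$.

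For $t_0t_{123}$, I will first use (\ref{eq:t123-vs-t0}) to swap it to $t_{123}t_0$ at the cost of a lower-degree term. I then apply (\ref{eq:t123t0}), whose right side has heavy parts of degree at most $4$ in total degree $7$, for example $t_1t_{23}t_0$ or $t'_{12}t_{23}t_{134}$. The other three, $t_0t_{124}$, $t_0t_{134}$ and $t_0t_{234}$, follow by applying $\sigma_\ast$, $\tau_\ast$ and $\sigma_\ast\tau_\ast$ (Convention \ref{conv:symmetry}), since these preserve $|\cdot|$ and heaviness.

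The remaining case, and the one I expect to be the main obstacle, is $t_{123}t_{24}$, because no relation listed there has exactly this left side. I will try the symmetric image of (\ref{eq:t13t234}) under $\sigma_\ast\tau_\ast$. This sends $t_{13}\mapsto t_{24}$ and $t_{234}\mapsto t_{123}$, yielding a formula for $t_{24}t_{123}$ in which every term has heavy degree at most $4<5$, for example $t_3t_0$ and $t_{23}t_{134}$. I will then transfer it to $t_{123}t_{24}$ via the corresponding symmetric image of (\ref{eq:t13-vs-t234}), whose correction terms also have lower heavy degree. The care needed is to check that the symmetries send each term to terms of the same $\|\cdot\|^\ast$ type, and that $\tau_\ast$ only changes coefficients by $q\mapsto\overline{q}$. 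This keeps the leading coefficient invertible and does not matter for reducibility.
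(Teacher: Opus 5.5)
Your proposal is correct and follows the paper's proof essentially step for step. Each claim is read off from the matching reduction relation in (III), together with a commutator relation from (II) where needed. The case $t_{123}t_{24}$ comes from (\ref{eq:t13t234}) and (\ref{eq:t13-vs-t234}) via $\sigma_\ast\tau_\ast$; the paper instead reduces $t_{234}t_{13}$ first and then applies the symmetry, and obtains $t_{24}t_{13}\equiv 2t_0$ by symmetry rather than from the commutator relation, but these differences are cosmetic. One small slip: the right side of (\ref{eq:t123t0}) contains $\overline{q}^{\frac{1}{2}}t_2t_{13}t_0$, whose heavy part has degree $6$, not at most $4$; since $6<7$, the conclusion $t_0t_{123}\sim 0$ still holds.
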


\begin{proof}
By (\ref{eq:t0t0}), $t_0^2\sim 0$.

By (\ref{eq:t123t0}), (\ref{eq:t123-vs-t0}), $t_0t_{123}\sim 0$; by symmetry, $t_0g\sim 0$ for $g\in\{t_{124},t_{134},t_{234}\}$.

By (\ref{eq:t13t234}), (\ref{eq:t13-vs-t234}), $t_{234}t_{13}\sim 0$; by symmetry, $t_{123}t_{24}\sim 0$.

By (\ref{eq:t123t123}), $t_{123}^2\sim 0$; by (\ref{eq:t123t124}), $t_{123}t_{124}\sim 0$; by (\ref{eq:t123t234}), $t_{123}t_{234}\sim 0$.

By (\ref{eq:t13t24}), $t_{13}t_{24}\equiv 2t_0$; by symmetry, $t_{24}t_{13}\equiv 2t_0$.

By (\ref{eq:t123t134}), $t_{123}t_{134}\equiv qt_{13}t_0$.
\end{proof}

\begin{lem}
Each normal monomial belongs to $R\langle\mathcal{C}\rangle$.
\end{lem}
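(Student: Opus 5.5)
We argue by induction on $\|u\|^\ast$ with respect to the well-order $\prec$ on pairs $(|u|,|\ddot{u}|)$; this is a well-order since both entries are nonnegative integers. It is also useful to nest an inner induction on $\|u\|$, so that normalization can be applied freely. Let $u=sw\ell$ be normal, where $s=t_1^{i_1}\cdots t_4^{i_4}$, $w$ is a word in the heavy generators, and $\ell=t_{12}^{j_1}t_{23}^{j_2}t_{34}^{j_3}t_{14}^{j_4}$. It suffices to show: if $w\notin\mathcal{A}$, then $u$ equals a linear combination of monomials that are strictly smaller, so that after normalization the induction hypothesis applies. When $w\in\mathcal{A}$, $u$ is already in $\mathcal{C}$.

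The first step is to reorder the heavy part. Claim \ref{cla:commutator} gives $gg'=q^{\varepsilon}g'g+(\text{terms with smaller }\|\cdot\|^\ast)$ for heavy $g,g'$. Hence permuting the letters of $w$ changes $u$ only by a unit and by $\prec$-smaller terms. Any new special or light factors produced in such corrections are moved to the outer parts, as in the Remark; by the Remark, switching special generators does not increase $\|\cdot\|^\ast$. So we may put $w$ in any convenient order.

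The second step is a case analysis on the multiset of heavy letters in $w$, using Lemma \ref{lem:reduce} and its images under $\sigma_\ast,\tau_\ast$.
\begin{itemize}
\item If $w$ contains $t_0$ together with any other heavy letter $\ne t_{13},t_{24}$ (including a second $t_0$), then after reordering, a factor $t_0^2$ or $t_0t_{ijk}$ appears, and it can be reduced.
\item If $w$ contains both $t_{13}$ and $t_{24}$, we use $t_{13}t_{24}\equiv 2t_0$. This lowers $|\ddot{u}|$? No: the degree $|\cdot|$ is equal, and $|\ddot{\ }|$ is unchanged. So displacement is not by itself a reduction. Instead, after displacing we obtain a word containing $t_0$, and we argue with a secondary count: the number of letters $t_{13},t_{24}$ strictly decreases. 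We therefore refine the induction by this count.
\item Two degree-3 letters: each pair other than $\{t_{123},t_{134}\}$ and $\{t_{124},t_{234}\}$ reduces, either directly by Lemma \ref{lem:reduce} or by symmetry, since $t_{123}t_{124}$, $t_{123}t_{234}$ and $t_{123}^2$ all appear there. The exceptional pair $t_{123}t_{134}$ is displaced by $qt_{13}t_0$, again trading for $t_0$ with an increased count of $t_{13}$. Here we need the induction order chosen so that this terminates: order first by the number of degree-3 letters, then by the number of $t_{13},t_{24}$ letters.
\item The pairing $t_{123}t_{24}$ reduces, and so do its symmetric versions $t_{134}t_{24}$, $t_{124}t_{13}$, $t_{234}t_{13}$.
\end{itemize}
After these steps the surviving $w$ are $t_{13}^k$ or $t_{24}^k$ times at most one letter from $\{t_0,t_{123},t_{134}\}$ (respectively $\{t_0,t_{124},t_{234}\}$), which is exactly $\mathcal{A}$. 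One more case needs care: $t_{13}^k t_0 t_{24}$ contains both $t_{13}$ and $t_{24}$, so it is handled by the second case above.

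The main obstacle is the bookkeeping of the induction order. Displacements preserve $\|\cdot\|^\ast$ and change only the shape of $w$: the cycle $t_{123}t_{134}\to t_{13}t_0$ and $t_{13}t_{24}\to t_0$ could in principle interact with $t_0t_{13}$ commutations. The correction terms from (\ref{eq:t13-vs-t0}) are smaller. I will define a tertiary invariant, for instance $(\#\text{degree-3 letters}) + 2\,\#t_0 - \ldots$, or simply argue termination by inspection: every displacement strictly decreases the number of heavy letters. Indeed, $t_{13}t_{24}\to t_0$ and $t_{123}t_{134}\to t_{13}t_0$ preserve the count in the second case but convert letters into $t_0$, which then reduces against the remaining degree-3 letters. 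I expect this verification, and checking that all correction terms really are $\prec$-smaller after normalization, to be the delicate part.
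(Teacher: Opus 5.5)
Your proposal is correct and is essentially the paper's proof. It uses induction on $\|u\|^\ast$, reorders the heavy part via Claim~\ref{cla:commutator}, and runs the same case analysis through Lemma~\ref{lem:reduce} and its symmetric images, with the displacements $t_{13}t_{24}\equiv 2t_0$ and $t_{123}t_{134}\equiv qt_{13}t_0$. Your lexicographic count (number of degree-$3$ letters, then number of letters $t_{13},t_{24}$) is an explicit form of the paper's chaining of cases (Case~3(b) or Case~4 $\to$ Case~2 $\to$ Case~1).

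Two slips should be fixed.

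First, your closing claim that every displacement lowers the number of heavy letters is false for $t_{123}t_{134}\to qt_{13}t_0$. It is also unnecessary given your count.

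Second, an inner induction on $\|u\|$ does not by itself make normalization safe. Same-degree corrections can raise $|\ddot{u}|$: for example, $t_{234}t_1=\overline{q}t_1t_{234}+(q^{\frac{1}{2}}-\overline{q}^{\frac{3}{2}})t_0$. The fix is the one in the Remark: always reduce or displace at the leftmost place of the heavy part. There, new $t_i$'s only have to pass other special letters, which costs only harmless $t_{12},t_{34}$ corrections. New light letters only move rightward, with corrections of lower degree.
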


\begin{proof}
We prove $u\in R\langle\mathcal{C}\rangle$ for normal monomials $u$ by induction on $\|u\|^\ast$.
Clearly, $u\in R\langle\mathcal{C}\rangle$ if $|u|\le 3$.
Suppose $|u|>3$ and that $u'\in R\langle\mathcal{C}\rangle$ for all normal monomial $u'$ with $\|u'\|^\ast\prec\|u\|^\ast$.

Let $u=t_1^{i_1}\cdots t_4^{i_4}g_1\cdots g_mt_{12}^{j_1}t_{23}^{j_2}t_{34}^{j_4}t_{14}^{j_4}$, where each $g_i$ is heavy. We shall apply Lemma \ref{lem:reduce} repeatedly.
\begin{enumerate}
  \item Whenever $g_i=g_j=t_0$ for some $i<j$, we can move $g_i,g_j$ to the leftmost place, and reduce $t_0^2$, so as to reduce $u$.
  \item Suppose $g_i=t_0$ for exactly one $i$. Just assume $g_1=t_0$.
  \begin{enumerate}
    \item When $g_j\in\{t_{123},t_{124},t_{134},t_{234}\}$ for some $j$, we can move $g_j$ to meet $t_0$, and then reduce $t_0t_{234}$,
          so as to reduce $u$.
    \item Otherwise, $g_j\in\{t_{13},t_{24}\}$ for all $j>1$.
          If $u=t_0t_{13}^{m-1}$ or $u=t_0t_{24}^{m-1}$, we are done;
          otherwise, $\{g_j,g_{j+1}\}=\{t_{13},t_{24}\}$ for some $j$, we can move $g_jg_{j+1}$ to the leftmost, displace it by $2t_0$, and then reduce $t_0^2$ as in Case 1.
  \end{enumerate}
  \item Suppose $\max\{|g_1|,\ldots,|g_m|\}=3$. Let $i_0=\min\{i\colon |g_i|=3\}$. Just assume $g_{i_0}=t_{123}$;
        the cases $g_{i_0}\in\{t_{124},t_{134},t_{234}\}$ are similar.
  \begin{enumerate}
    \item If $g_j=t_{13}$ for all $j\ne i_0$, then $u\sim t_{13}^{m-1}t_{123}$, and we are done.
    \item Otherwise, $g_j\ne t_{13}$ for some $j$, i.e. $g_j\in\{t_{24},t_{123},t_{124},t_{134},t_{234}\}$. Moving $g_{i_0}$, $g_j$ to the leftmost position if necessary, we may just assume $i_0=1$, $j=2$.

          If $g_2=t_{134}$, then we can displace $g_1g_2$ by $qt_{13}t_0$, and turn to Case 2;
          if $g_2\in\{t_{24},t_{123},t_{124},t_{234}\}$, then we can reduce $g_1g_2$.
  \end{enumerate}
  \item Suppose all $g_i\in\{t_{13},t_{24}\}$. If $\{g_j,g_{j+1}\}=\{t_{13},t_{24}\}$ for some $j$, then we can move $g_jg_{j+1}$ to the leftmost position, displace it by $2t_0$, and turn to Case 2.
        Otherwise, $u=t_{13}^{m}$ or $u=t_{24}^{m}$.
\end{enumerate}

The proof is completed.
\end{proof}

Thus Theorem \ref{thm:basis-1} is proved. Actually, Theorem \ref{thm:presentation-1} is also establishes, since it has been shown that each monomial can be reduced into $R\langle\mathcal{C}\rangle$ via the relations given in Theorem \ref{thm:presentation-1}.

\section{The skein algebra of $\Sigma_{2,0}$}

We may construct $\Sigma_{2,0}\times[0,1]$ by attaching a $2$-handle $H$ to $\Sigma\times[0,1]$ along the circle $\partial\Sigma\times\{1/2\}$.
The inclusion $\Sigma\times[0,1]\hookrightarrow\Sigma_{2,0}\times[0,1]$ induces an epimorphism
$\mathcal{S}(\Sigma_{2,0})\twoheadrightarrow\mathcal{S}(\Sigma)$.

\begin{figure}[h]
  \centering
  \includegraphics[width=13cm]{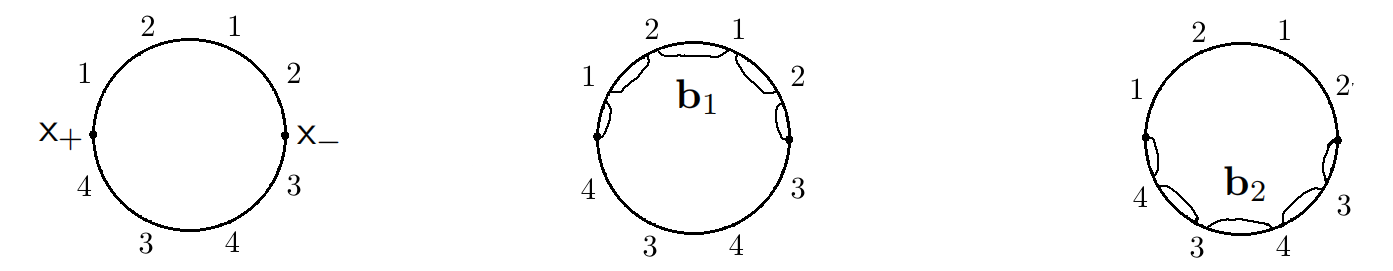}\\
  \caption{}\label{fig:attaching}
\end{figure}

Fix two points $\mathsf{x}_{\pm}\in\partial\Sigma$ and take two arcs $\mathbf{b}_1,\mathbf{b}_2$ with $\partial\mathbf{b}_1=\partial\mathbf{b}_2=\{\mathsf{x}_{\pm}\}$ and sticking to $\partial\Sigma$ closely, as illustrated in Figure \ref{fig:attaching}. Note that $\mathbf{b}_1\cup\mathbf{b}_2$ is isotopic to $\partial\Sigma$.
Regard $\mathsf{x}_{\pm}$ as in $\Sigma\times\{0\}$.

Let $L$ denote the set of 1-submanifolds $\mathbf{u}\subset\Sigma\times[0,1]$ with $\partial\mathbf{u}=\{\mathsf{x}_{\pm}\}$ and such that the projection onto $\Sigma$ is perpendicular to $\partial\Sigma$ at $\mathsf{x}_{\pm}$.
For each $\mathbf{u}\in L$, let ${\rm tr}(\mathbf{u}\mathbf{b}_i)$ denotes the link $\mathbf{u}\cup\mathbf{b}_i$, 
and put
\begin{align*}
{\rm Sl}(\mathbf{u})={\rm tr}(\mathbf{u}\mathbf{b}_1)-{\rm tr}(\mathbf{u}\mathbf{b}_2)\in\mathcal{S}(\Sigma).
\end{align*}

We have the following simple but useful observation:
\begin{cla}\label{cla:sliding}
If $\mathbf{l}_1,\mathbf{l}_2\subset\Sigma\times[0,1]$ are links such that $\mathbf{l}_2$ is obtained by sliding some component of $\mathbf{l}_1$ along the attaching circle of $H$, then $\mathbf{l}_1-\mathbf{l}_2={\rm Sl}(\mathbf{u})$ in $\mathcal{S}(\Sigma)$ for some $\mathbf{u}\in L$.
\end{cla}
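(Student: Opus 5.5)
We prove Claim \ref{cla:sliding} by localizing the slide. A handle slide of a component $\mathbf{k}$ of $\mathbf{l}_1$ over $H$ is supported near a band $\mathbf{e}$ joining a point of $\mathbf{k}$ to the attaching circle $\partial\Sigma\times\{1/2\}$. First isotope $\mathbf{l}_1$ in $\Sigma\times[0,1]$ so that this band becomes a short straight arc. Then, in the projection to $\Sigma$, the slide affects $\mathbf{k}$ only in a small neighborhood of a boundary point of $\Sigma$. Using (\ref{eq:identify}), in the form of conjugating by arcs in a collar, we may take this boundary point to be near $\mathsf{x}_{\pm}$, with the band reaching $\partial\Sigma$ between $\mathsf{x}_+$ and $\mathsf{x}_-$.

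Next, cut $\mathbf{k}$ at the foot of the band. Deleting a tiny arc of $\mathbf{k}$ at that point and connecting its two endpoints straight to $\mathsf{x}_+$ and $\mathsf{x}_-$ yields a 1-submanifold $\mathbf{u}$ consisting of the rest of $\mathbf{l}_1$ and the cut-open $\mathbf{k}$. By construction $\partial\mathbf{u}=\{\mathsf{x}_{\pm}\}$ and $\mathbf{u}$ meets $\partial\Sigma$ perpendicularly, so $\mathbf{u}\in L$. Closing $\mathbf{u}$ with $\mathbf{b}_1$ recovers $\mathbf{l}_1$ up to isotopy, since $\mathbf{b}_1$ is a short arc hugging the boundary, which is isotopic to the deleted tiny arc. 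Closing $\mathbf{u}$ with $\mathbf{b}_2$ instead produces $\mathbf{k}$ band-summed with a parallel copy of the attaching circle, because $\mathbf{b}_1\cup\mathbf{b}_2$ is isotopic to $\partial\Sigma$. This is exactly $\mathbf{l}_2$.

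The main thing to check is framing and height conventions. The slide pushes a parallel copy of $\partial\Sigma\times\{1/2\}$ that passes over or under the other strands, whereas $\mathbf{b}_2$ sits near $\Sigma\times\{0\}$ close to $\partial\Sigma$. Since $\mathbf{b}_2$ sticks to $\partial\Sigma$ in the collar, no other strand of $\mathbf{u}$ lies between it and the boundary, so any height can be isotoped to any other there. For the framing, the blackboard framing along $\mathbf{b}_2$ agrees with the framing of a parallel copy of the attaching circle taken with the product framing, which is the framing the handle slide uses.

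If the slide goes around the opposite way, we interchange the roles of $\mathbf{b}_1$ and $\mathbf{b}_2$. This gives $\mathbf{l}_1-\mathbf{l}_2=-{\rm Sl}(\mathbf{u})$. It is again of the required form ${\rm Sl}(\mathbf{u}')$ for a suitable $\mathbf{u}'\in L$, obtained by closing up from the other side. Granting that the relevant pictures are isotopic, this completes the proof.
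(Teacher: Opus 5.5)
Your argument is correct and is the reasoning the paper leaves implicit; the paper states Claim \ref{cla:sliding} as an observation without proof. After an isotopy, the slide exchanges a boundary-hugging arc of the slid component for the complementary one, and cutting there gives $\mathbf{u}\in L$ whose closures by $\mathbf{b}_1$ and $\mathbf{b}_2$ are $\mathbf{l}_1$ and $\mathbf{l}_2$. The appeal to (\ref{eq:identify}) and the separate ``opposite way'' case are unnecessary, since an isotopy supported in a collar of $\partial\Sigma$ already lets you place the cut so that the $\mathbf{b}_1$-closure is $\mathbf{l}_1$.
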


\begin{figure}[h]
  \centering
  \includegraphics[width=11.7cm]{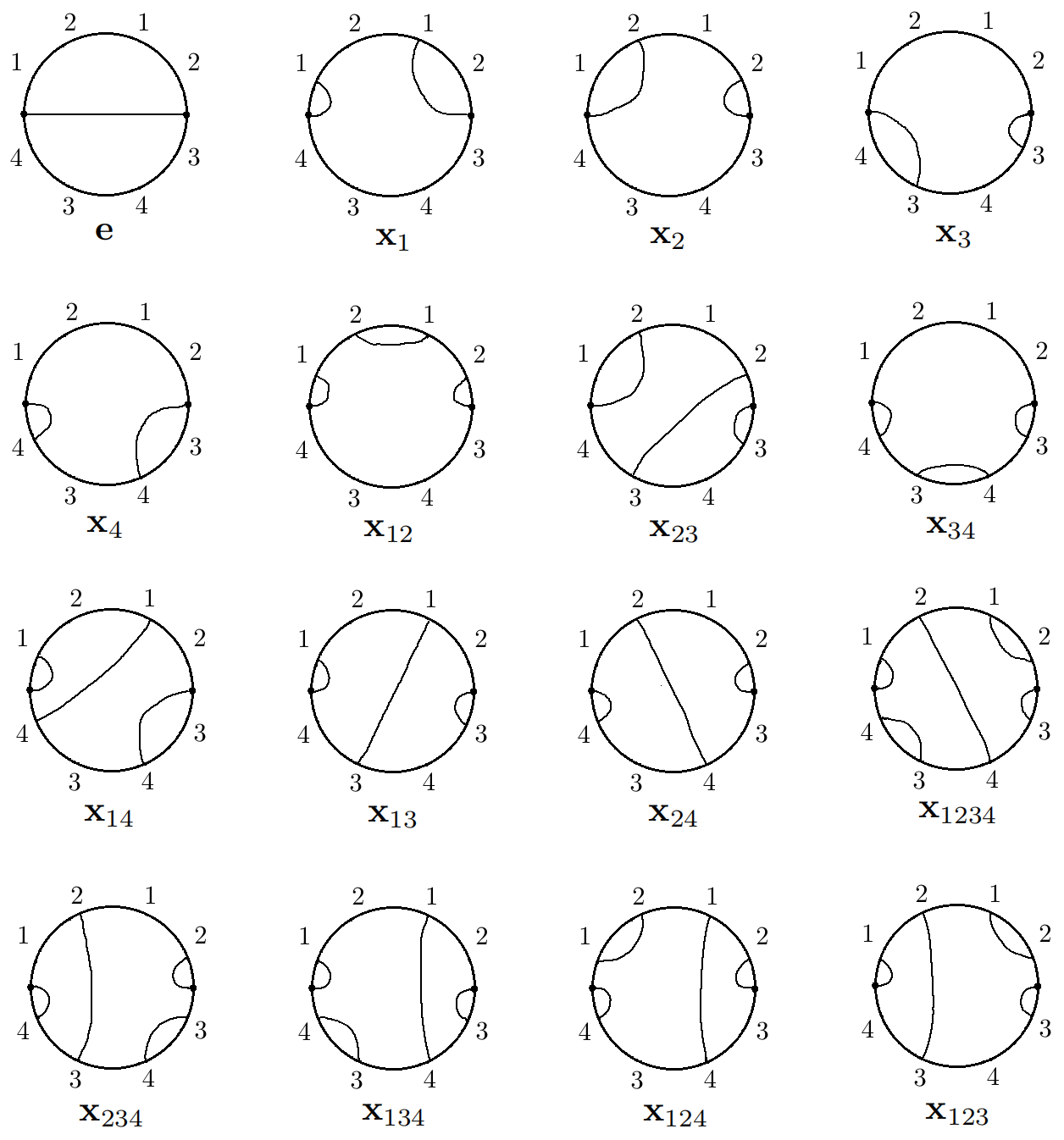}\\
  \caption{}\label{fig:arc}
\end{figure}

Let $X$ denote the set of the arcs introduced in Figure \ref{fig:arc}.

\begin{lem}\label{lem:kernel}
The kernel of the $R$-algebra morphism $\mathcal{S}(\Sigma)\twoheadrightarrow \mathcal{S}(\Sigma_{2,0})$ 
coincides with the left ideal generated by ${\rm Sl}(\mathbf{a})$ for $\mathbf{a}\in X$.
\end{lem}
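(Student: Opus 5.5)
The plan is to combine the standard description of the skein module of a manifold obtained by attaching a 2-handle with a reduction of arbitrary slides to slides of the arcs in $X$. By Przytycki's handle-sliding result, the kernel of $\mathcal{S}(\Sigma\times[0,1])\twoheadrightarrow\mathcal{S}(\Sigma_{2,0}\times[0,1])$ is the $R$-submodule spanned by the differences $\mathbf{l}_1-\mathbf{l}_2$, where $\mathbf{l}_2$ is obtained from $\mathbf{l}_1$ by sliding one component over $H$. By Claim \ref{cla:sliding}, each such difference equals ${\rm Sl}(\mathbf{u})$ for some $\mathbf{u}\in L$. Conversely, each ${\rm Sl}(\mathbf{u})$ lies in the kernel, since ${\rm tr}(\mathbf{u}\mathbf{b}_1)$ and ${\rm tr}(\mathbf{u}\mathbf{b}_2)$ differ by a slide over $H$. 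Hence the kernel is exactly $R\langle{\rm Sl}(\mathbf{u})\colon\mathbf{u}\in L\rangle$, and it remains to show that this module equals the left ideal $\mathcal{J}$ generated by ${\rm Sl}(\mathbf{a})$, $\mathbf{a}\in X$.

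The inclusion $\mathcal{J}\subseteq\ker$ is immediate, because the kernel is a two-sided ideal. For the reverse inclusion, first note that ${\rm Sl}$ is $R$-linear on $\mathcal{E}_{\mathsf{x}_+,\mathsf{x}_-}$ (defined via $L$) and compatible with the left $\mathcal{S}(\Sigma)$-action. The reason is that stacking a link $\mathbf{l}$ on top of $\mathbf{u}$ commutes with closing up by $\mathbf{b}_i$, since the $\mathbf{b}_i$ sit near $\Sigma\times\{0\}$; this gives ${\rm Sl}(\mathbf{l}\mathbf{u})=\mathbf{l}\,{\rm Sl}(\mathbf{u})$. Skein relations applied to $\mathbf{u}$ away from its endpoints pass through ${\rm Sl}$, so ${\rm Sl}$ descends to a left-module map $\mathcal{E}_{\mathsf{x}_+,\mathsf{x}_-}\to\mathcal{S}(\Sigma)$. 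The statement is therefore reduced to showing that the image of this map lies in $\mathcal{J}$. For that it suffices to show that $\mathcal{E}_{\mathsf{x}_+,\mathsf{x}_-}$ is generated as a left $\mathcal{S}(\Sigma)$-module by $X$, possibly up to elements whose ${\rm Sl}$-image is zero or already lies in $\mathcal{J}$.

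To establish this generation, I will resolve crossings to write any element of $\mathcal{E}$ as a combination of (arc)$\times$(multicurve). Then I apply Lemma \ref{lem:chop-up}(i) to express each arc as $\sum_ia_i\mathbf{c}_i$ with $a_i\in\mathcal{T}=\mathcal{S}(\Sigma)$ and $d_k(\mathbf{c}_i)\le1$ for all $k$. Next I use Lemma \ref{lem:chop-up}(ii) together with induction on $|\mathbf{c}|$: any simple arc with $d_k\le1$ is $\sim$ a chosen representative with the same set of visited $\mathbf{z}_k$'s. I expect $X$ to consist of one representative for each subset of $\{1,2,3,4\}$, or a slightly smaller list. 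The perpendicularity condition in $L$ and the choice of side at $\mathsf{x}_\pm$ may force a few extra arc types, for example arcs differing by which side of $\mathsf{x}_\pm$ they leave from. I would handle these by the isomorphism (\ref{eq:identify}), or by observing that ${\rm Sl}$ of an arc going around the boundary is, up to sign, ${\rm Sl}$ of a shorter arc.

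The main obstacle is matching exactly the list $X$ in Figure \ref{fig:arc} and checking the base cases. One must verify that the relations produced by Lemma \ref{lem:chop-up}, which move letters past each other, keep us within the left $\mathcal{S}(\Sigma)$-span of $X$; here the coefficients always appear on the left because of the stacking convention. One must also check that arcs with $|\mathbf{c}|=0$ (the trivial arc) have ${\rm Sl}=\partial$-type differences which either vanish or belong to $X$. I would first check, using the pictures of Figures \ref{fig:chop-1}--\ref{fig:relate}, that all the correction terms have the form $a\cdot\mathbf{c}$ with $a$ stacked above.
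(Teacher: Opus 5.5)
Your proposal is correct and is essentially the paper's proof: Przytycki's 2-handle description together with Claim~\ref{cla:sliding} identifies the kernel with the $R$-span $\mathcal{Q}$ of the $\mathrm{Sl}(\mathbf{u})$, $\mathbf{u}\in L$, and then Lemma~\ref{lem:chop-up} reduces $\mathcal{Q}$ to the left ideal generated by $\mathrm{Sl}(\mathbf{a})$, $\mathbf{a}\in X$; for that second step you also make explicit the left $\mathcal{S}(\Sigma)$-linearity $\mathrm{Sl}(\mathbf{l}\mathbf{u})=\mathbf{l}\,\mathrm{Sl}(\mathbf{u})$, which the paper leaves implicit. Your guess about $X$ is right: it has exactly one arc for each subset of $\{1,2,3,4\}$, with $\mathbf{e}$ for the empty set, so the extra cases you worry about (which side of $\mathsf{x}_\pm$ an arc leaves from, or arcs running around the boundary) do not arise.
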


\begin{proof}
By \cite[Proposition 2.2 (5)]{Pr99} and Claim \ref{cla:sliding}, $\mathcal{S}(\Sigma_{2,0})\cong\mathcal{S}(\Sigma)/\mathcal{Q}$ as $R$-modules, where $\mathcal{Q}$ is the $R$-module generated by ${\rm Sl}(\mathbf{u})$ for all $\mathbf{u}\in L$.

By Lemma \ref{lem:chop-up}, $\mathcal{Q}=\mathcal{S}(\Sigma)\langle\{{\rm Sl}(\mathbf{a})\colon\mathbf{a}\in X\}\rangle$.
\end{proof}

\begin{rmk}
\rm The interesting point is that $\mathcal{Q}$ is actually a two-sided ideal, as it is the kernel of a $R$-algebra morphism.
\end{rmk}

\begin{proof}[Proof of Theorem \ref{thm:presentation-2}]
By Lemma \ref{lem:kernel}, it suffices to show that the relations ${\rm Sl}(\mathbf{a})=0$ for $\mathbf{a}\in X$ give rise to (\ref{eq:Sl-0})--(\ref{eq:Sl-234}).
\begin{enumerate}
  \item As shown in Figure \ref{fig:relation-e}, ${\rm tr}(\mathbf{e}\mathbf{b}_1)=t_{12}t'_{12}-qt_2^2-\overline{q}t_1^2+\alpha$.
        By symmetry, ${\rm tr}(\mathbf{e}\mathbf{b}_2)=t_{34}t'_{34}-qt_4^2-\overline{q}t_3^2+\alpha$.
        Hence
        ${\rm Sl}(\mathbf{e})=0$ is equivalent to (\ref{eq:Sl-0}).
  \item From Figure \ref{fig:relation-x1} we see that ${\rm Sl}(\mathbf{x}_1)=0$ reads (\ref{eq:Sl-1}).
        By symmetry, ${\rm Sl}(\mathbf{x}_2)=0$, ${\rm Sl}(\mathbf{x}_3)=0$, ${\rm Sl}(\mathbf{x}_4)=0$ respectively read (\ref{eq:Sl-2})--(\ref{eq:Sl-4}).
  \item By Figure \ref{fig:relation-x12}, ${\rm Sl}(\mathbf{x}_{12})=0$ reads (\ref{eq:Sl-12}).
        By symmetry, ${\rm Sl}(\mathbf{x}_{34})=0$ reads (\ref{eq:Sl-34}).
  \item By Figure \ref{fig:relation-x13}, ${\rm Sl}(\mathbf{x}_{13})=0$ reads $y_{123}=y_{134}$, which is equivalent to (\ref{eq:Sl-13}).
        By symmetry, ${\rm Sl}(\mathbf{x}_{24})=0$ is equivalent to (\ref{eq:Sl-24}).
  \item By Figure \ref{fig:relation-x14}, ${\rm Sl}(\mathbf{x}_{14})=0$ reads
        \begin{align*}
        0&=\overline{q}^{\frac{1}{2}}t_{34}y_{134}-\overline{q}(\overline{q}(t_3^2-1)t_{14}-\overline{q}^{\frac{3}{2}}t_3t_{134})-y_{124}  \\
        &=\overline{q}^{\frac{1}{2}}t_{34}(\overline{q}^{\frac{1}{2}}t'_{34}t_{14}-\overline{q}^{\frac{3}{2}}t_4t_{134}+\overline{q}^2t_{13})
        +\overline{q}^2(1-t_3^2)t_{14}+\overline{q}^{\frac{5}{2}}t_3t_{134} \\
        &\ \ \ \ -(\overline{q}^{\frac{1}{2}}t'_{12}t_{24}-\overline{q}^{\frac{3}{2}}t_2t_{124}+\overline{q}^2t_{14})  \\
        &=\overline{q}t_{34}t'_{34}t_{14}-\overline{q}^2(qt_4t_{34}+(\overline{q}^{\frac{1}{2}}-q^{\frac{3}{2}})t_3)t_{134}
        +\overline{q}^{\frac{5}{2}}t_{34}t_{13}+\overline{q}^2(1-t_3^2)t_{14}  \\
        &\ \ \ \ +\overline{q}^{\frac{5}{2}}t_3t_{134}-(\overline{q}^{\frac{1}{2}}t'_{12}t_{24}-\overline{q}^{\frac{3}{2}}t_2t_{124}+\overline{q}^2t_{14})  \\
        &=(\overline{q}t_{34}t'_{34}-\overline{q}^2t_3^2)t_{14}-(\overline{q}t_4t_{34}-\overline{q}^{\frac{1}{2}}t_3)t_{134}
        +\overline{q}^{\frac{5}{2}}t_{34}t_{13}-\overline{q}^{\frac{1}{2}}t'_{12}t_{24}+\overline{q}^{\frac{3}{2}}t_2t_{124},
        \end{align*}
        which, due to (\ref{eq:Sl-1}), is equivalent to (\ref{eq:Sl-14}).
        By symmetry, ${\rm Sl}(\mathbf{x}_{23})=0$ is equivalent to (\ref{eq:Sl-23}).
  \item By Figure \ref{fig:relation-x123}, ${\rm Sl}(\mathbf{x}_{123})=0$ is equivalent to
        \begin{align*}
        t_4t_0&=qt'_{34}t_{124}-qt_2y_{134}+t_1t_{23}   \\
        &=qt'_{34}t_{124}-q^{\frac{1}{2}}t_2t'_{34}t_{14}+\overline{q}^{\frac{1}{2}}t_2t_4t_{134}-\overline{q}t_2t_{13}+t_1t_{23},
        \end{align*}
        which is just (\ref{eq:Sl-123}).
        By symmetry, ${\rm Sl}(\mathbf{x}_{124})=0$, ${\rm Sl}(\mathbf{x}_{134})=0$, ${\rm Sl}(\mathbf{x}_{234})=0$ are respectively equivalent to (\ref{eq:Sl-124}), (\ref{eq:Sl-134}), (\ref{eq:Sl-234}).
  \item By Figure \ref{fig:relation-x1234}, ${\rm Sl}(\mathbf{x}_{1234})=0$ reads $z_1=z_2$, which is equivalent to (\ref{eq:Sl-1234}).
\end{enumerate}
\end{proof}

\newpage

\section{Appendix: pictorial formulas}

\begin{conv}
\rm For ease of understanding, we often use gray shadows to indicate regions whose boundaries can be contracted through an isotopy.

In each figure, whenever a term is underlined, it will be expressed in detail immediately.
\end{conv}

\begin{figure}[H]
  \centering
  \includegraphics[width=12.5cm]{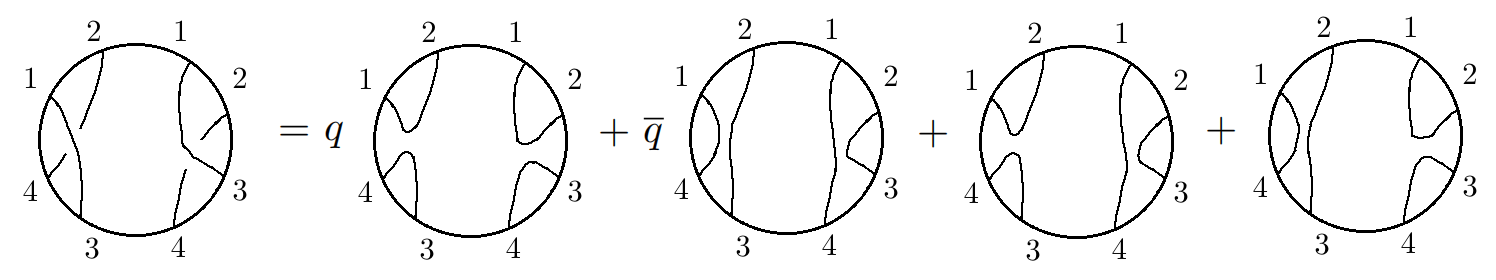}\\
  \caption{Deducing the formula for $t_{13}t_{24}$.}\label{fig:t13-times-t24}
\end{figure}

\begin{figure}[H]
  \centering
  \includegraphics[width=12.5cm]{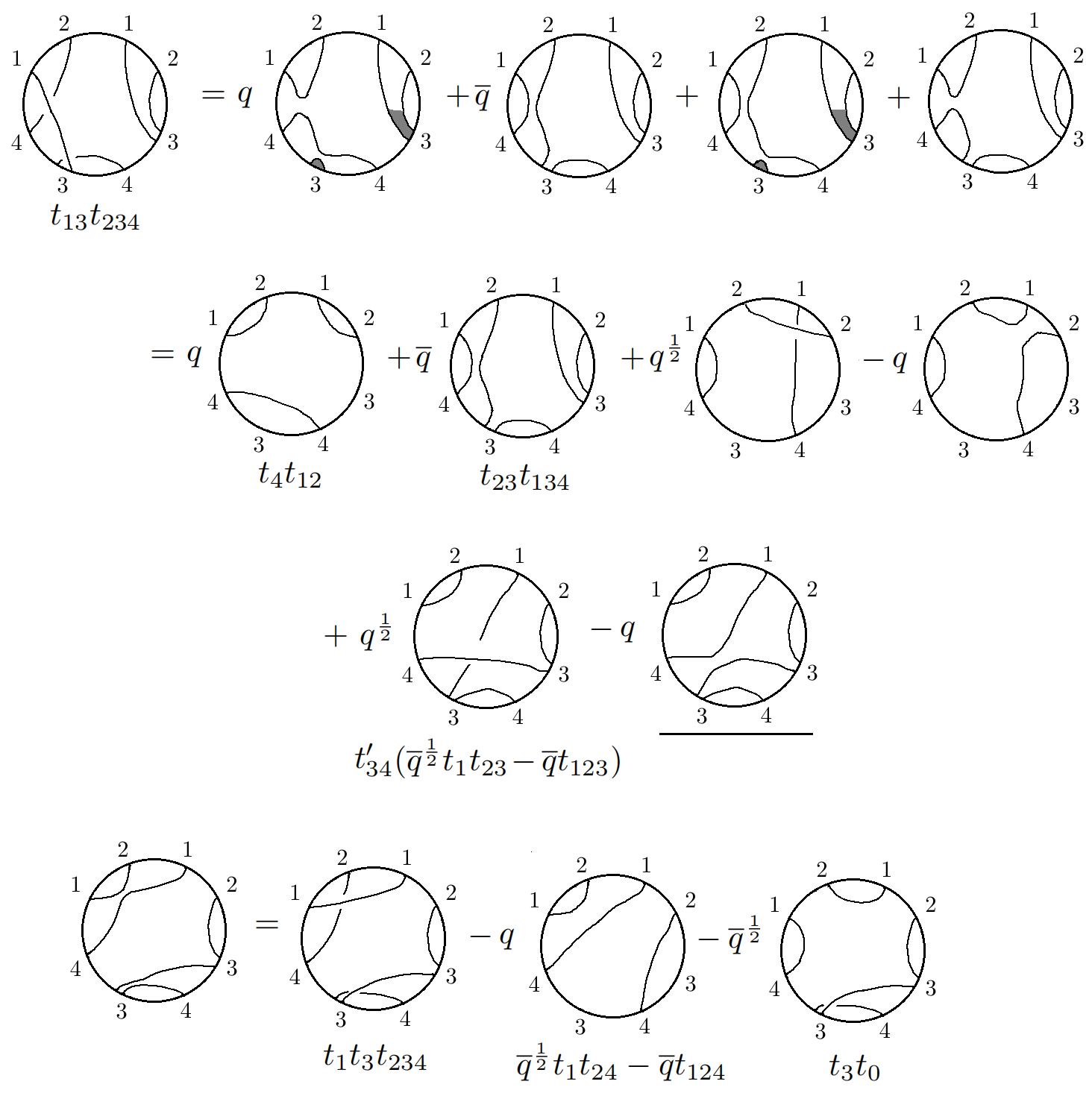}\\
  \caption{Deducing the formula for $t_{13}t_{234}$.}\label{fig:t13-times-t234}
\end{figure}

\begin{figure}[H]
  \centering
  \includegraphics[width=12.7cm]{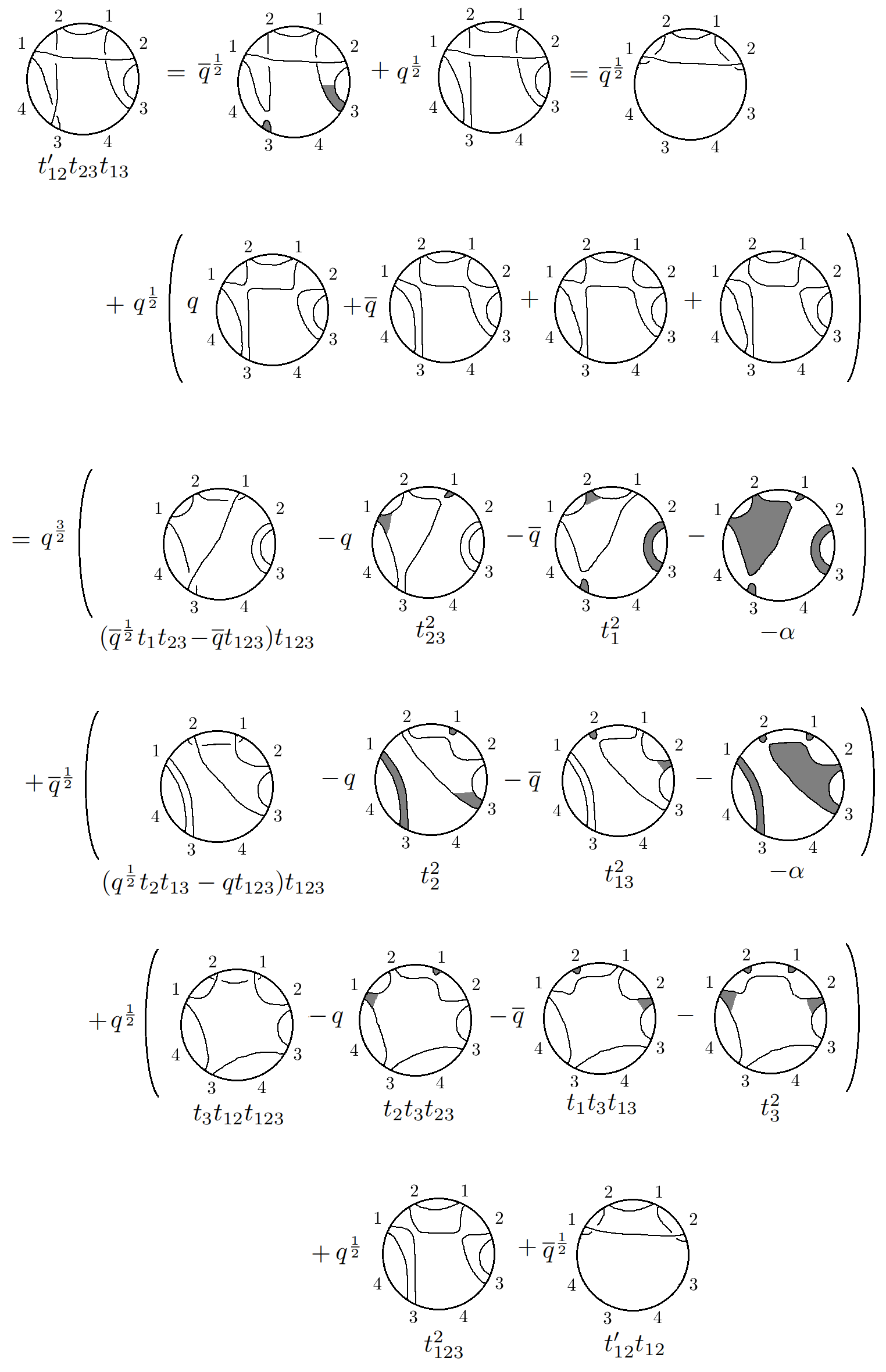}\\
  \caption{Deducing the formula for $t_{123}^2$.}\label{fig:t123-times-t123}
\end{figure}

\begin{figure}[H]
  \centering
  \includegraphics[width=12.5cm]{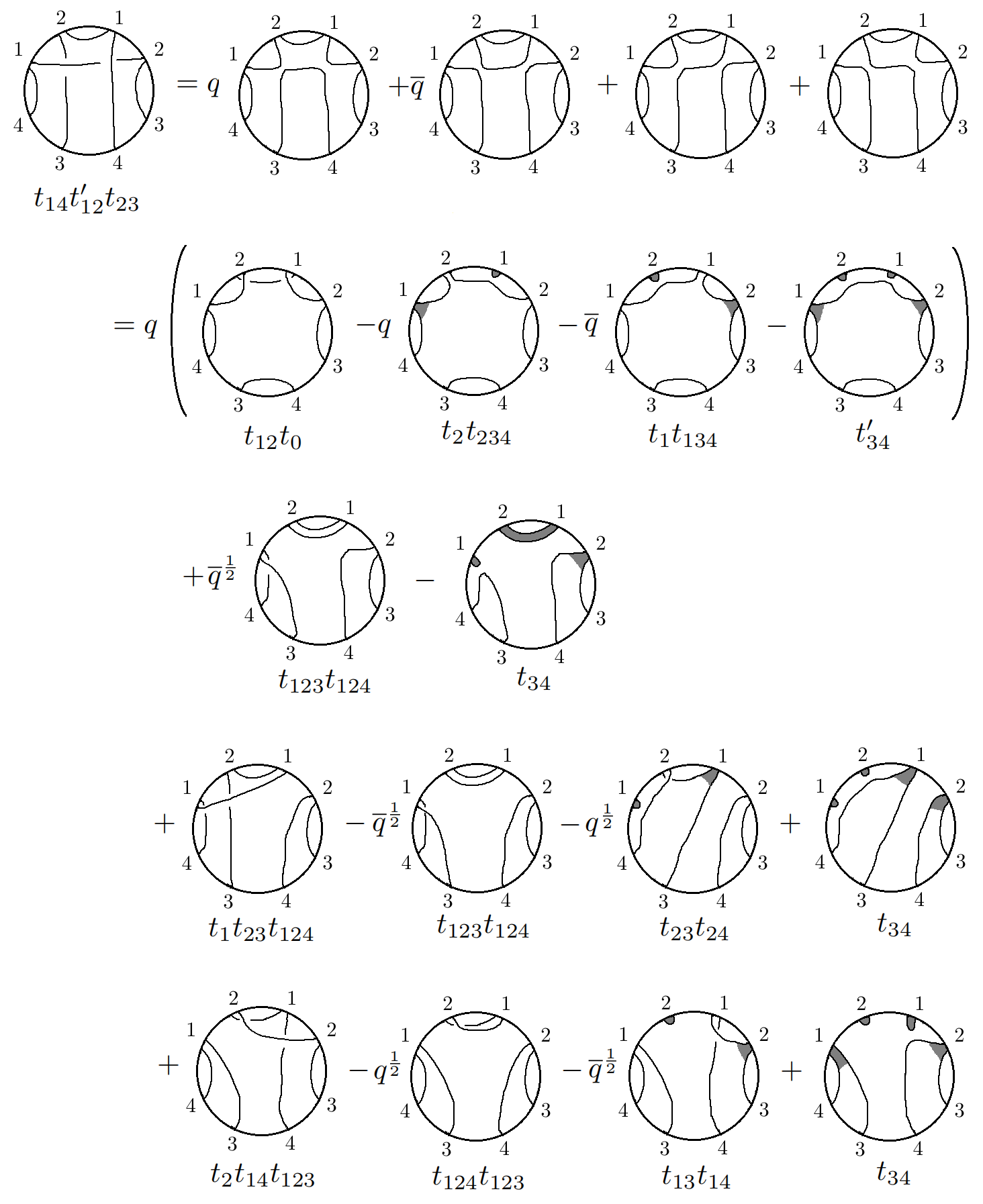}\\
  \caption{Deducing the formula for $t_{123}t_{124}$.}\label{fig:t123-times-t124}
\end{figure}

\begin{figure}[H]
  \centering
  \includegraphics[width=12.5cm]{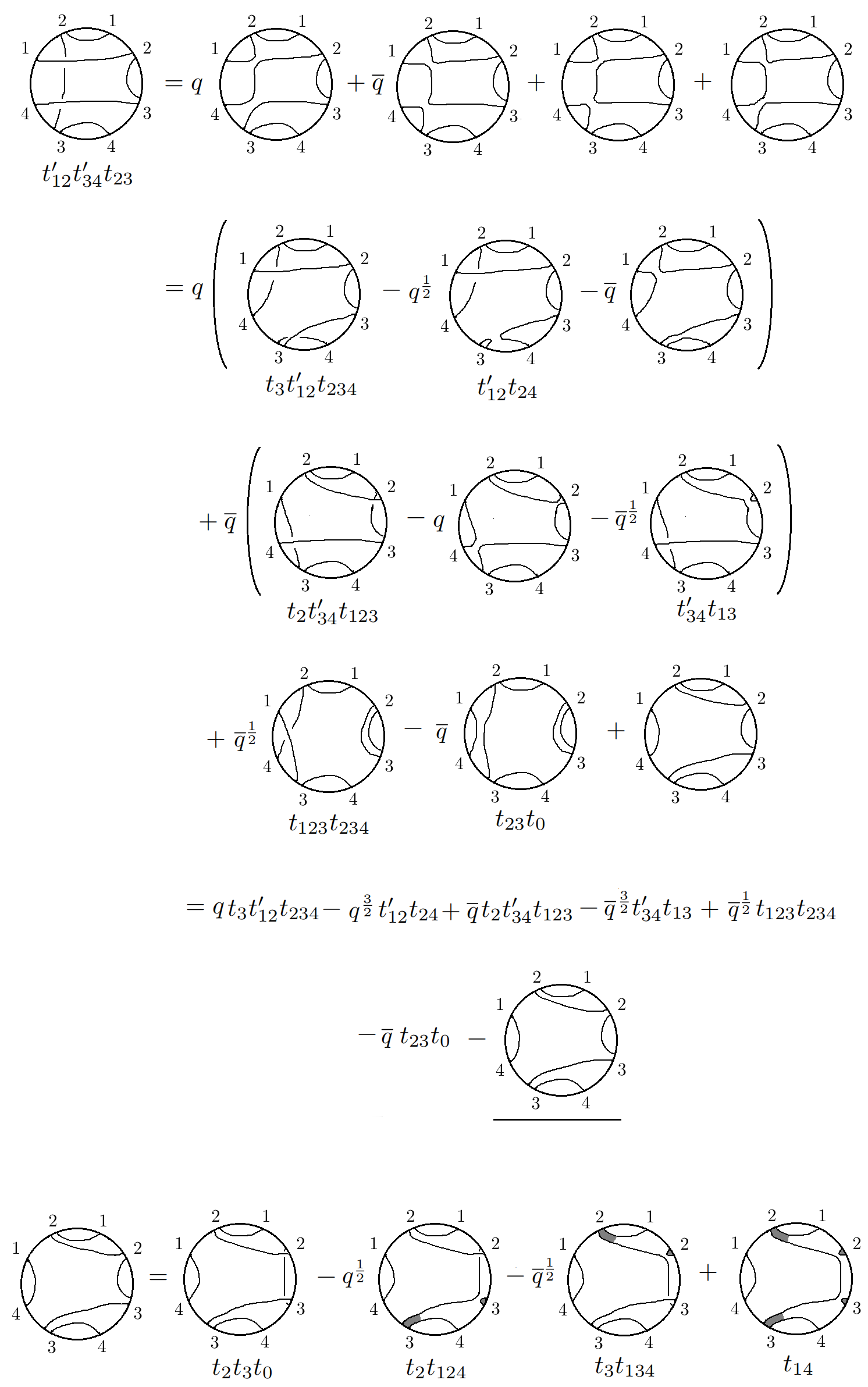}\\
  \caption{Deducing the formula for $t_{123}t_{234}$.}\label{fig:t123-times-t234}
\end{figure}

\begin{figure}[H]
  \centering
  \includegraphics[width=12.3cm]{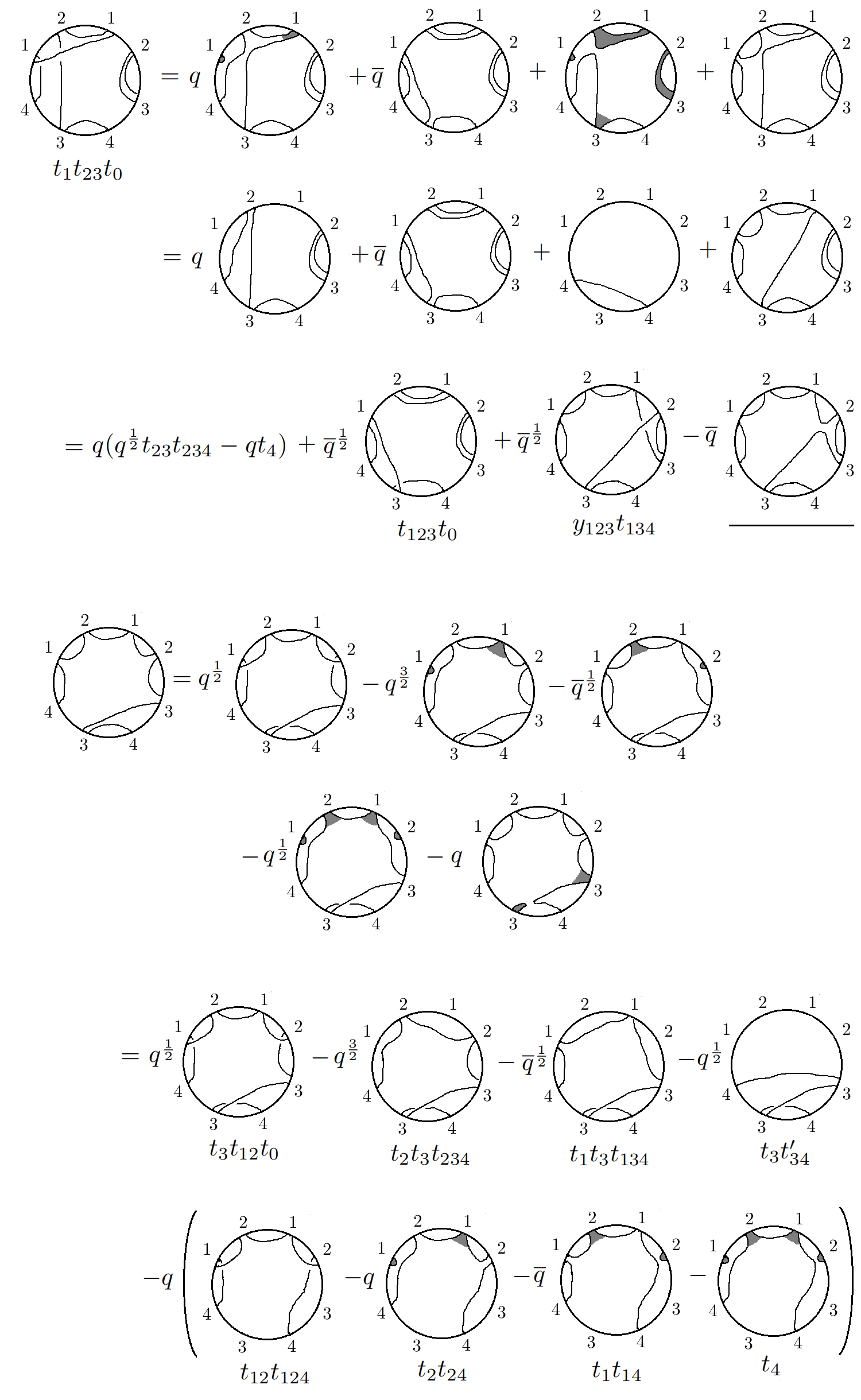}\\
  \caption{Deducing the formula for $t_{123}t_0$.}\label{fig:t123-times-t0}
\end{figure}

\begin{figure}[H]
  \centering
  \includegraphics[width=12.7cm]{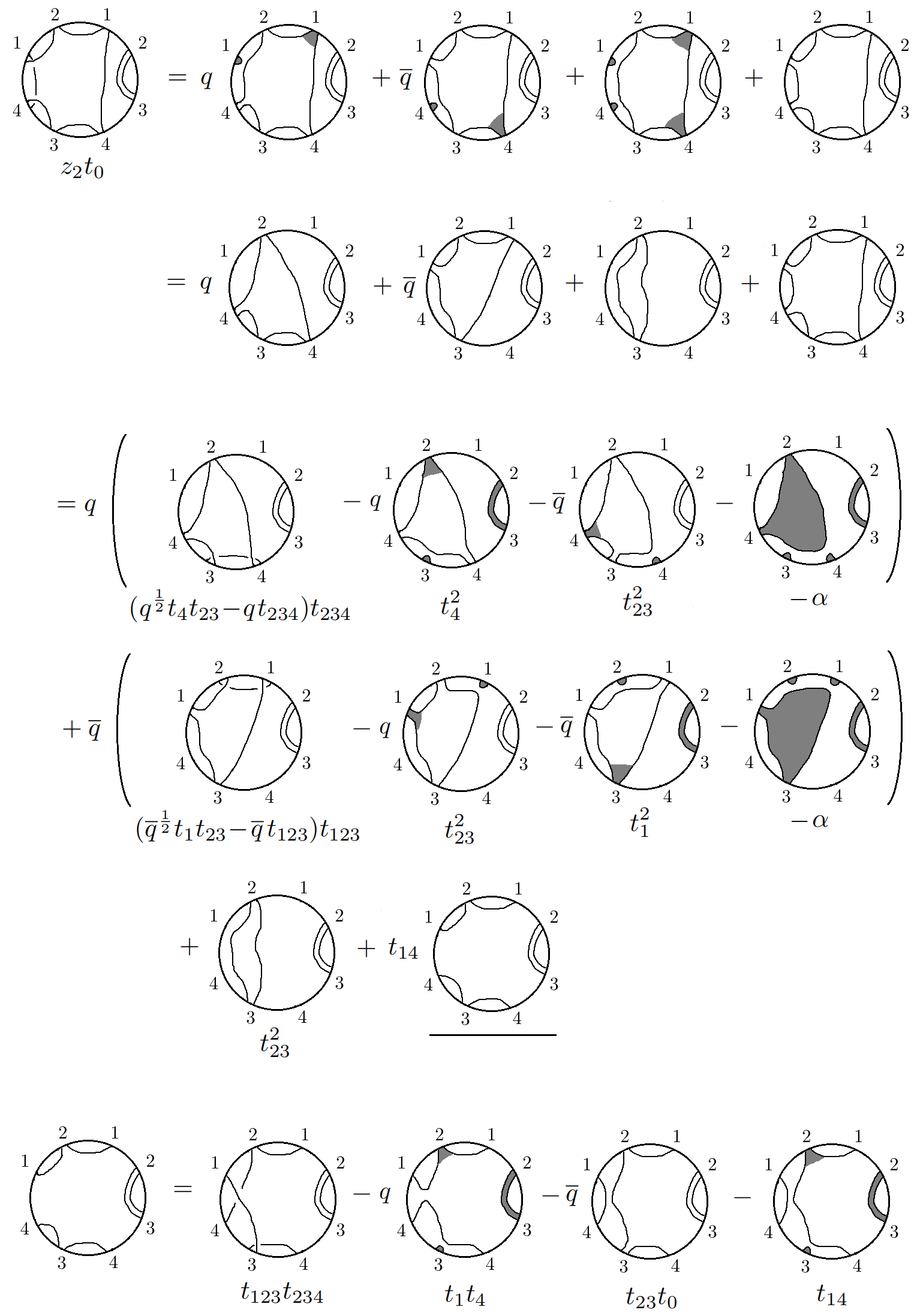}\\
  \caption{Deducing the formula for $t_0^2$.}\label{fig:t0-times-t0}
\end{figure}

\begin{figure}[H]
  \centering
  \includegraphics[width=11.3cm]{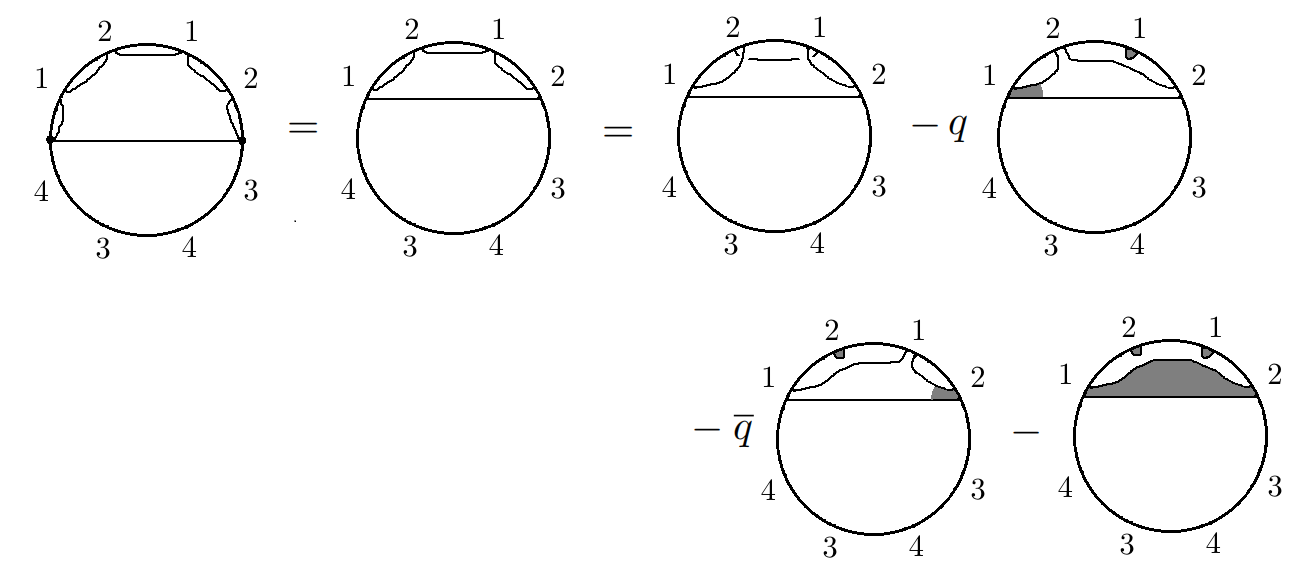}\\
  \caption{}\label{fig:relation-e}
\end{figure}

\begin{figure}[H]
  \centering
  \includegraphics[width=12.5cm]{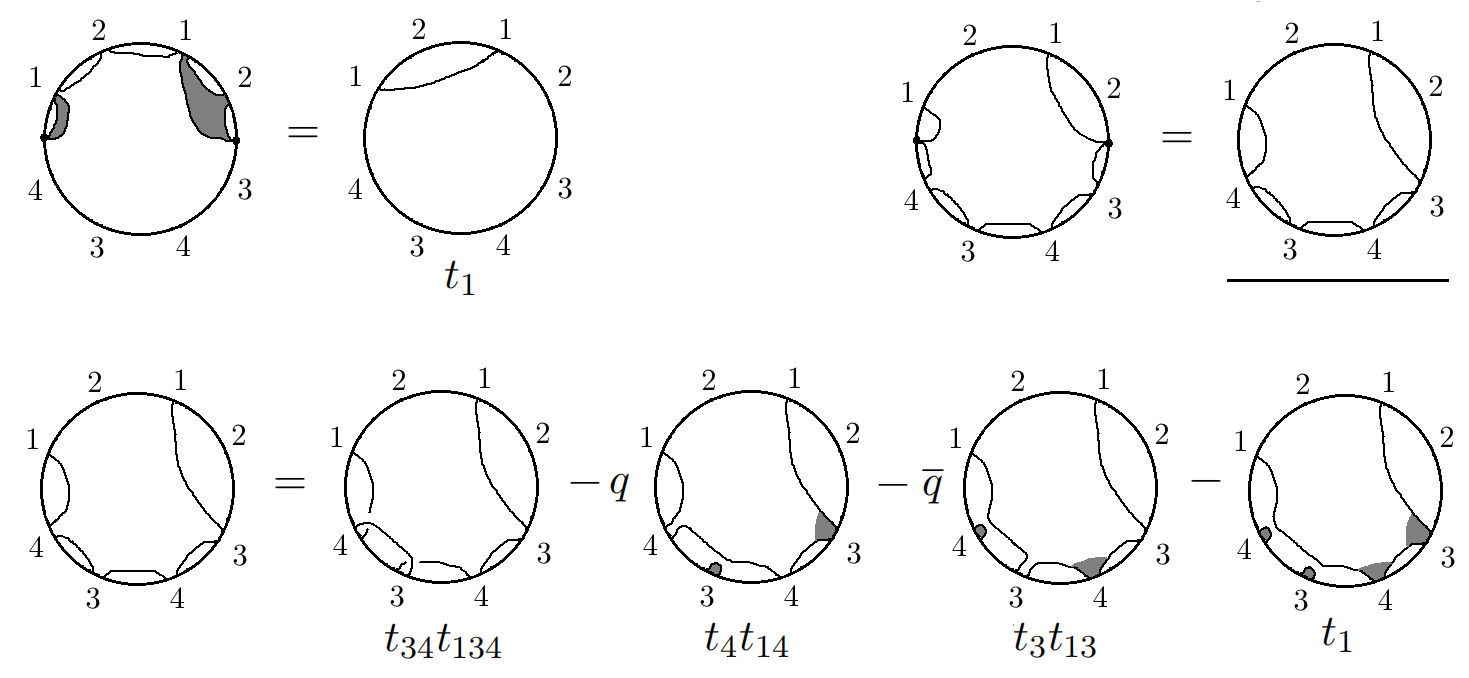}\\
  \caption{}\label{fig:relation-x1}
\end{figure}

\begin{figure}[H]
  \centering
  \includegraphics[width=12.2cm]{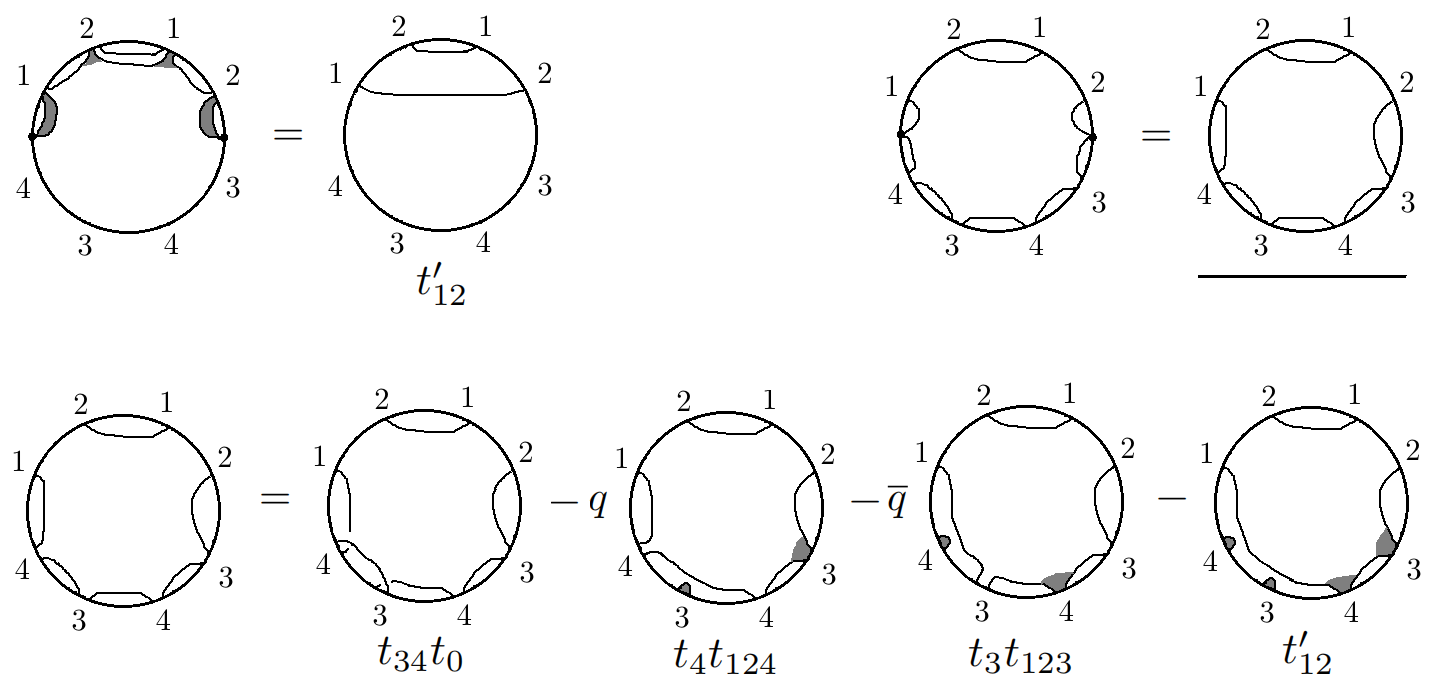}\\
  \caption{}\label{fig:relation-x12}
\end{figure}

\begin{figure}[H]
  \centering
  \includegraphics[width=11.5cm]{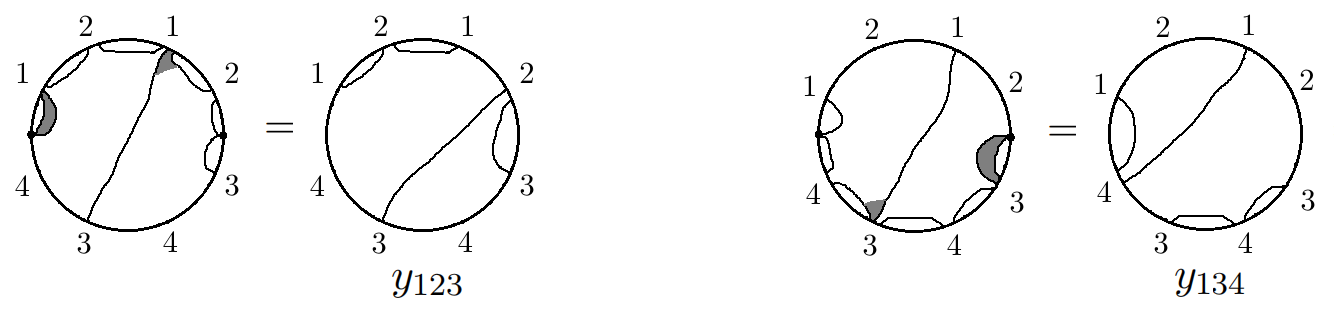}\\
  \caption{}\label{fig:relation-x13}
\end{figure}

\begin{figure}[H]
  \centering
  \includegraphics[width=12.7cm]{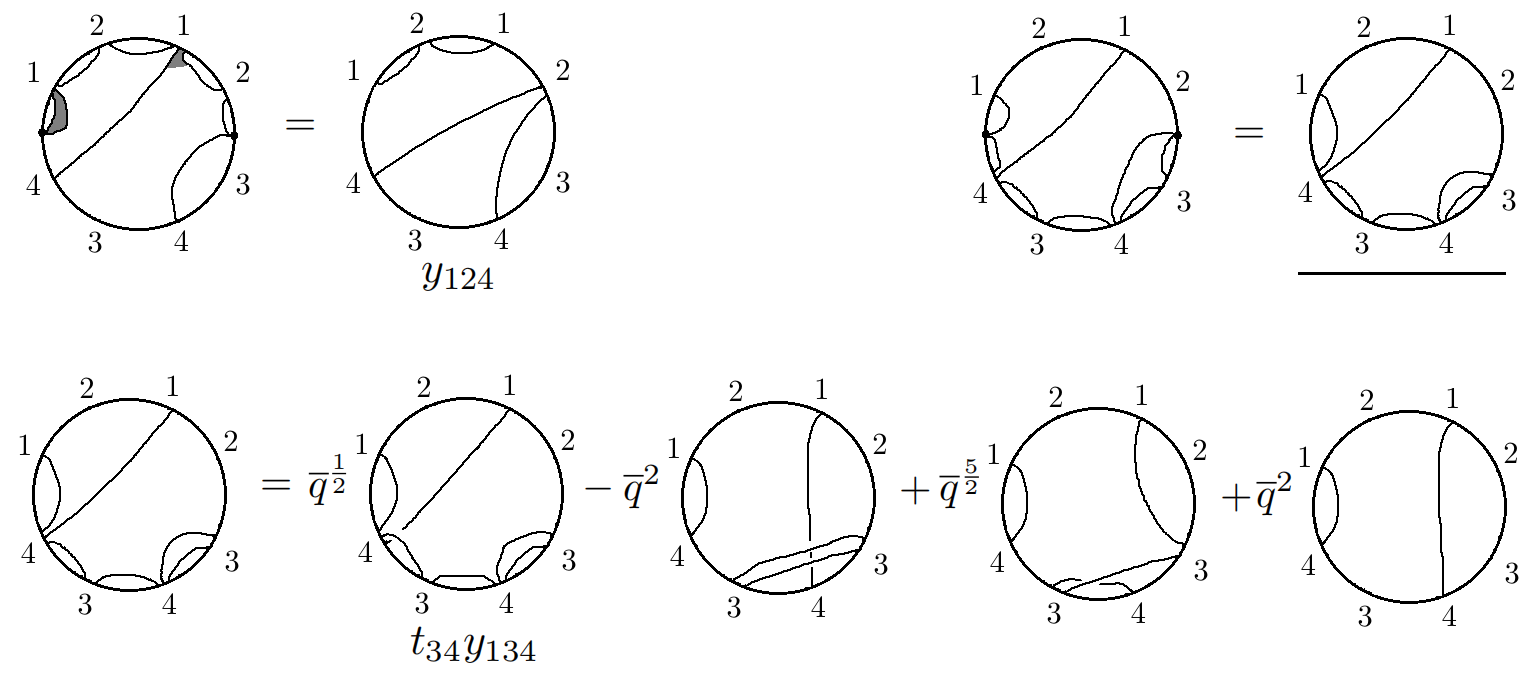}\\
  \caption{}\label{fig:relation-x14}
\end{figure}

\begin{figure}[H]
  \centering
  \includegraphics[width=12.7cm]{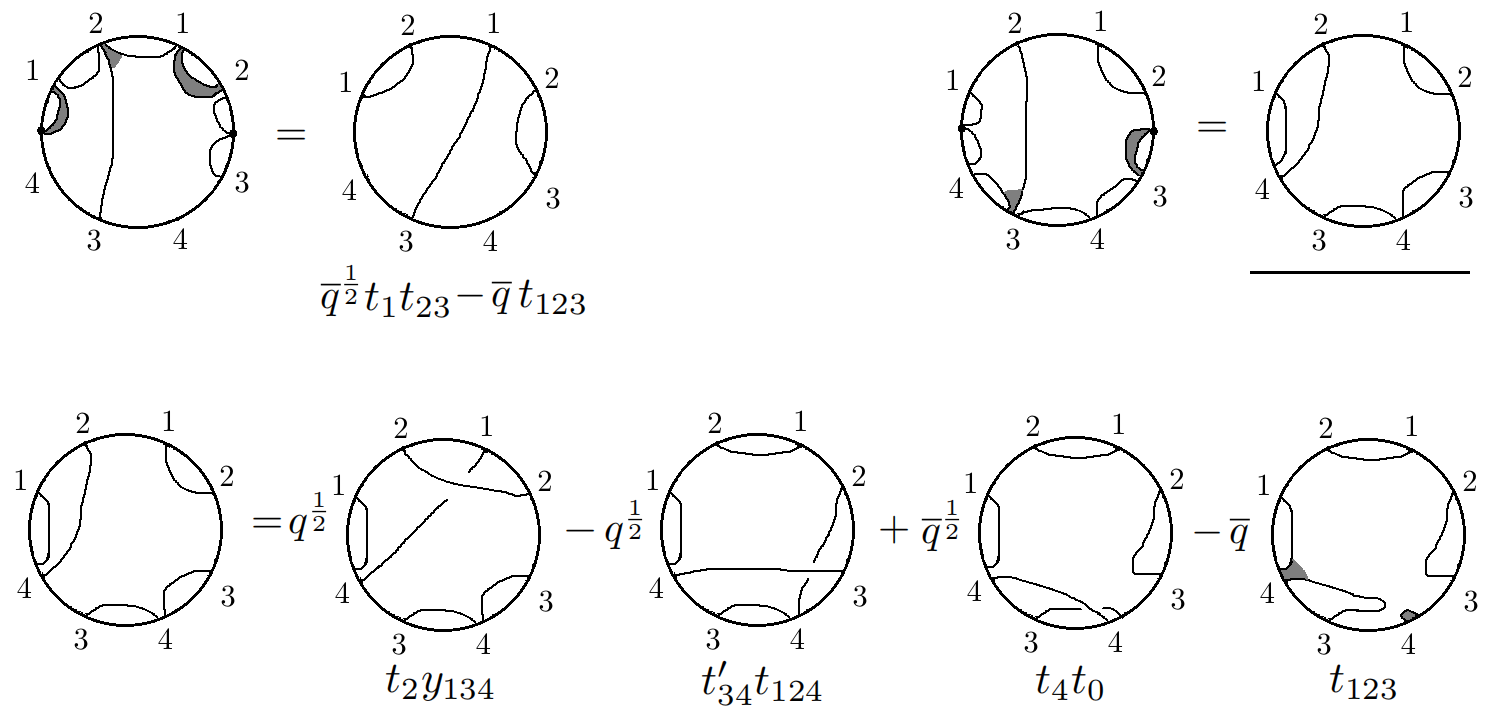}\\
  \caption{}\label{fig:relation-x123}
\end{figure}

\begin{figure}[H]
  \centering
  \includegraphics[width=12cm]{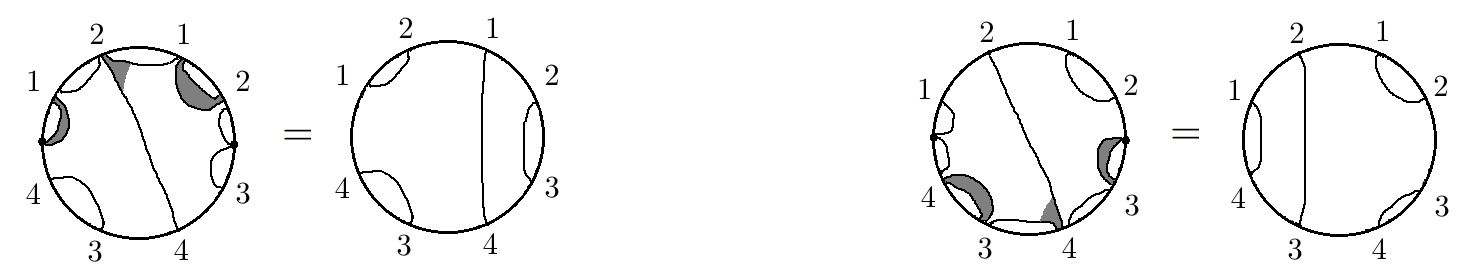}\\
  \caption{}\label{fig:relation-x1234}
\end{figure}

\bigskip

\noindent
Haimiao Chen (orcid: 0000-0001-8194-1264)\ \ \  \emph{chenhm@math.pku.edu.cn} \\
Department of Mathematics, Beijing Technology and Business University, \\
Liangxiang Higher Education Park, Fangshan District, Beijing, China.

\end{document}